\newcommand{\po}{\left(}
\newcommand{\pf}{\right)}
\newcommand{\co}{\left[}
\newcommand{\cf}{\right]}
\newcommand{\cco}{\llbracket}
\newcommand{\ccf}{\rrbracket}
\newcommand{\R}{\mathbb R}
\newcommand{\N}{\mathbb N} 
\newcommand{\dd}{\mathrm{d}}
\newcommand{\na}{\nabla}
\newcommand{\Contg}{\mathrm{Cont}_{L}(M,\rho)}
\newcommand{\Contd}{\mathrm{Cont}_R(M,\rho)}
\newcommand{\nv}[1]{#1}
\newcommand{\nvv}[1]{#1}
\newtheorem{thm}{Theorem}
\newtheorem{assu}{Assumption}
\newtheorem{lem}[thm]{Lemma}
\newtheorem{prop}[thm]{Proposition}
\title{Almost sure contraction for diffusions on $\R^d$. Application to generalised Langevin diffusions.}
\author{Pierre Monmarché}
\begin{document}
\maketitle


\begin{abstract}
In the case of diffusions on $\R^d$ with constant diffusion matrix, without assuming reversibility nor hypoellipticity, we prove that the contractivity of the deterministic drift is equivalent to the constant rate contraction of Wasserstein distances   $\mathcal W_p$, $p\in[1,\infty]$.  It also implies concentration inequalities for ergodic means of the process. Such a contractivity property is then established for some non-equilibrium chains of anharmonic oscillators and for some generalised Langevin diffusions when the potential is convex with bounded Hessian and the friction is sufficiently high. This extends previous known results for the usual (kinetic) Langevin diffusion.
\end{abstract}
\section{Introduction}

\subsection{Classical contraction results for reversible diffusion processes}

The overdamped Langevin diffusion is the solution   $(X_t)_{t\geqslant 0}$ in $\R^n$ of
\begin{equation}\label{eq:overdamped}
 \dd X_t \ = \ -\na U(X_t) \dd t  + \sqrt 2 \dd W_t
\end{equation}
with $(W_t)_{t\geqslant 0}$ a standard Brownian motion on $\R^n$ and $U\in\mathcal C^1(\R^n)$. When $U$ is uniformly convex, a few things are known for this process. First, considering two solutions of this equation (driven with the same Brownian motion $W$) with different initial conditions, we see that the distance between them almost surely decays at constant rate. This coupling argument yields the contraction of Wasserstein distances $\mathcal W_p$ for all order $p\in[1,+\infty]$ along the Markov semigroup $(P_t)_{t\geqslant 0}$ associated to the process. Besides, the convexity of $U$ is equivalent to the fact that the process \eqref{eq:overdamped} satisfies a curvature-dimension inequality CD($\rho,\infty$) in the sense of Bakry-Émery \cite{BakryGentilLedoux,BakryEmery}, with a curvature  $\rho>0$. For reversible diffusions, CD($\rho,\infty$) is equivalent to $\Gamma(P_t f) \leqslant e^{-2\rho t} P_t(\Gamma f)$ for all sufficiently smooth $f$, where $\Gamma(f)=1/2L(f^2)-fL f$ is the carr\'e du champ associated to the  generator $L$ of the process, and is also equivalent to $\sqrt{\Gamma(P_t f)} \leqslant e^{-\rho t} P_t(\sqrt{\Gamma f})$, see \cite[Theorem 4.7.2 et Theorem 5.5.2]{BakryGentilLedoux}. The positive curvature also implies for the invariant measure of the process (which here is explicit, with density proportional to $\exp(-U)$) a logarithmic Sobolev inequality. In the case of the overdamped Langevin diffusion, $Lf(x) = -\na U(x)\cdot \na f(x) + \Delta f(x)$ and $\Gamma(f) = |\na f|^2$; the convexity of $U$ is thus equivalent to estimates of the form $|\na P_t f|^q \leqslant e^{-q\rho t}P_t(|\na f|^q)$ for $q\in[1,2]$. According to the work of   Kuwada \cite{Kuwada1,Kuwada2}, these contractions of the gradient along the semigroup are equivalent to the  contractions of the distances $\mathcal W_p$ for $p\in[2,+\infty]$. As a consequence, for \eqref{eq:overdamped}, the contraction of $\mathcal W_2$ along $P_t$ is equivalent to the contraction of $\mathcal W_\infty$, and to the fact that $U$ is convex. This has first been stated, along with other equivalences, in the work \cite{VonRenesseSturm} of Von Renesse and Sturm, who showed that in fact the contraction of all the $\mathcal W_p$ distances for $p\in [1,+\infty]$ are equivalent \nvv{see also \cite[5.6.1]{wang2006functional})}. Here  the  process \eqref{eq:overdamped} is a  diffusion, moreover elliptic and reversible.

\subsection{A first motivating non-reversible example}

One of the starting points of the present article stems from the works \cite{BolleyGuillinMalrieu,Chatterji1,Dalalyan} which, each with more or less sharp quantitative results, establish under some conditions a contraction through the parallel coupling for the Langevin diffusion (sometimes called \emph{underdamped} or \emph{kinetic} Langevin diffusion) 
\begin{equation}\label{eq:Langevin_cinetique}
\left\{\begin{array}{rcl}
\dd X_t & = &  Y_t \dd t \\
\dd Y_t & = & -  \na U(X_t) \dd t - \gamma Y_t \dd t + \sqrt{2\gamma} \dd W_t
\end{array}
\right.
\end{equation}
on $\R^n\times\R^n$, with $\gamma>0$ a friction parameter. The process $(X_t,Y_t)_{t\geqslant 0}$ is still a  diffusion; however it is no more  elliptic (although still hypoelliptic); nor reversible (in the probabilist sense, while it is still reversible in the sense of physics, meaning up to a reflection of the velocity). Then, the picture is already less clear.

One may hope that the convexity of $U$ is sufficient to get a contraction for \eqref{eq:Langevin_cinetique}, in particular since the invariant measure has a density proportional to $\exp(-H(x,y))$ with $H(x,y)=U(x)+|y|^2/2$ (which is uniformly convex if $U$ is), which satisfies  \nvv{a logarithmic Sobolev}  inequality. Nevertheless, in \cite{BolleyGuillinMalrieu,Chatterji1,Dalalyan},  $\na^2 U$ has to be uniformly bounded, and moreover the friction $\gamma$ has to be sufficiently high depending on the lower and upper bounds of $\na^2 U$. Yet, in the Gaussian case where $U(x) = x\cdot S x$ with a real symmetric positive definite matrix $S$, there is always a contraction for the parallel coupling for a suitable Euclidean norm (in view of the discussion in Section~\ref{Sec:courbure_general}, this follows from  \cite{MoiGamma,Arnold}), without any restriction on $\gamma$ nor on the eigenvalues of $S$. Could it be that the restrictions on the results of \cite{BolleyGuillinMalrieu,Chatterji1,Dalalyan} only come from sub-optimal proofs and computations ? \nvv{We will see in Section \ref{Sec:suffit-pas} that it is not the case.}

\subsection{Contraction results for non-reversible diffusion processes: general questions}

In view of the Wasserstein contraction results of \cite{BolleyGuillinMalrieu,Chatterji1,Dalalyan} for the underdamped Langevin process \eqref{eq:Langevin_cinetique},  one can wonder what remains of the relations mentioned in the overdamped case. In the latter, $\Gamma(P_t f)$ and $|\na \nvv{P_t f}|^2$ happened to be  the same object.  On the contrary in the kinetic case $\Gamma(f) = |\na_y f|^2$, in particular the Bakry-Émery curvature is $-\infty$. Besides, notice the following: $\Gamma(f)$ is defined from a Markov generator $L$, i.e. some dynamics. On the contrary, $|\na f|$ is defined from a metric, in the general framework, by
the relation
\[|\na f(x)| \ :=  \ \lim_{d(x,y)\rightarrow 0} \frac{|f(x)-f(y)|}{d(x,y)}\]
for some distance $d$. One of the interests of the Bakry-Émery theory is precisely the possibility to define a metric from a Markov dynamics. However, in the case of the Langevin diffusion,  $\R^{2d}$ is already endowed with the Euclidean metric (and the usual gradient), and it is for the latter that Wasserstein contractions are obtained in \cite{BolleyGuillinMalrieu,Chatterji1,Dalalyan}.

\nvv{While previous works concerning the hypocoercive convergence of degenerate diffusion (in particular the work of Villani \cite{Villani2009}) were concerned with integrated quantities (entropy, $L^2$ norms\dots),}
in \cite{Baudoin}, Baudoin  was the first to adapt the Bakry-Émery theory \nvv{based on  local semigroup interpolations} to the process \eqref{eq:Langevin_cinetique}, with a sub-Riemannian geometry-oriented viewpoint. A \nvv{key-point} is to add to the carré du champ $\Gamma$ a ``vertical carré du champ"  $\Gamma^Z$ that complement\nvv{s} $\Gamma$ in the sense that $\Gamma+\Gamma^Z$ is equivalent to the square of the complete  gradient $|\na f|^2$.  This makes possible the adaptation of many classical arguments of semigroup interpolation in this new framework. Nevertheless, is it really necessary to keep $\Gamma$ and to complete it? Why not, instead, working with a gradient that has nothing to do with the carr\'e du champ ? Here are three examples pushing in this direction: \emph{1)} for a non-diffusive process, the carré du champ is a non-local operator that has in general nothing to do with $|\na f|^2$, yet in some cases it is possible to obtain, through Bakry-Émery type arguments, some estimates on $|\na P_t f|^2$, see \cite{MoiPDMP,MoiGamma}; \emph{2)} for generalised Ornstein-Uhlenbeck processes, i.e. for diffusions with a linear drift and a constant diffusion matrix, we can see in \cite{MoiGamma,Arnold} that, in order to establish an optimal convergence rate, one should work with quantities of the form $|Q\na f|^2$ where the definition of the matrix $Q$ only involves the drift matrix, and thus does not depend on the diffusion matrix at all, while the    carré du champ only depends on the diffusion matrix; \emph{3)} for a diffusion with a constant matrix diffusion, by design, the parallel coupling is blind to the diffusion matrix (from which, again, the   carré du champ  is defined), the contraction only comes from the deterministic drift.  

Besides, this last remark \nvv{indicates that, although hypoellipticity is often associated to convergence to equilibrium, and while the initial Sturm-Von Renesse theorem and all its variants since then concern elliptic diffusions, in fact this condition plays no role here.}  Take the ordinary differential equation $\dot x_t = -\na U(x_t)$ for a convex $U$: indeed there is contraction for two solutions starting with different initial conditions. More interestingly perhaps, it means that the almost sure contraction   results established  for the Langevin diffusion \eqref{eq:Langevin_cinetique} in fact apply to non-equilibrium chains of interacting particles, in the spirit of those studied in \cite{Menegaki,Eckmann,ReyBellet,HairerHeatBath}, here in the convex case and with a global damping.

One of the main purposes of the present article is to clarify the points mentioned in this section. More precisely, for a diffusion with a constant diffusion matrix, the question is to see what can be said about an almost sure contraction along the parallel coupling, regardless of any reversibility or hypoellipticity assumption.

\subsection{Main contributions and organization}

To conclude this introduction, we sum up the main contributions of this work:
\begin{itemize}
\item Some  characterizations of the contraction property for diffusion processes on $\R^d$ with constant diffusion matrices are provided. This is Theorem~\ref{thm:contraction} below.
\item We establish sharp sufficient and necessary conditions on $U$ and $\gamma$ for a contraction property to hold for the Langevin process \eqref{eq:Langevin_cinetique} in Proposition~\ref{prop:NSconditionLangevin}. This provides quantitative convergence and concentration inequalities for both pinned and unpinned non-equilibrium  anharmonic chains of oscillators, see Propositions~\ref{prop:out-of-eq1} and \ref{prop:out-of-eq2}.
\item We prove a contraction property for the so-called Generalised Langevin processes (see \eqref{eq:EDS} below)  in Theorem~\ref{thm:contractLangevin} under conditions that cover very degenerated cases. It is shown in Proposition~\ref{prop:L2} how to deduce from this a decay in $L^2$ (without  regularization arguments).
\end{itemize}
The remainder of this article is organised in three parts that are mostly independent.  Section~\ref{Sec:courbure_general} is concerned, in a general framework, with the question of almost sure   contraction. It is decomposed as Section~\ref{Subsec:resultat_courbure} where the main result of this part (Theorem~\ref{thm:contraction}) is stated and proven;  Section~\ref{SubSec:concentration} where classical arguments for obtaining concentration inequalities from   Theorem~\ref{thm:contraction} are adapted; Section~\ref{Subsec:PDMP} where a non-diffusive example, for which Theorem~\ref{thm:contraction} is false, is discussed. The second part, Section~\ref{sec:dampedHD}, concerns the standard Langevin process and chains of oscillators:  we will check in Section~\ref{Sec:suffit-pas} that the convexity of $U$ is not sufficient to get a contraction, and present the results for the chains of oscillators in Section~\ref{Sec:chains}.  Third,   Section~\ref{Sec:LangevinGeneralisee} is devoted to the study of generalised Langevin diffusions. After  presentation of these processes (in Section~\ref{SubSec:Generalized}) and some motivations (in Section~\ref{SubSec:LangevinGeneral}),  Section~\ref{Sec:contractLangevinGeneralise} contains the main result of this part (Theorem~\ref{thm:contractLangevin}), which extends the results of \cite{BolleyGuillinMalrieu,Chatterji1,Dalalyan} to some generalised Langevin diffusions, meaning that there is a contraction if $U$ is uniformly convex with bounded Hessian and the friction is high enough; finally, we will see in Section~\ref{Subsec:L2} that the contraction property yields an $L^2$ convergence speed, despite the process being non-reversible.

\subsection*{Notations}

We denote by $\mathcal M_{m,m}^{sym\geqslant 0}(\R)$ (resp. $\mathcal M_{m,m}^{sym> 0}(R)$) the set of real \nvv{symmetric} positive (resp. positive definite) matrices of size $m$, $x\cdot y$ the scalar product between $x,y\in \R^m$ and $|x|=\sqrt{x\cdot x}$. For $M\in\mathcal M_{m,m}^{sym>0}(\R)$ and $x\in \R^m$ we also write $\|x\|_M = \sqrt{x\cdot Mx}$.

For $M,N\in\mathcal M_{m,m,}(\R)$ (not necessarily  symmetric), the notation $M\geqslant N$ (resp. $M>N$) is understood in the sense of quadratic forms, i.e. $x\cdot M x \geqslant x\cdot  N x$ for all  $x\in\R^m$ (resp. $x\cdot M x > x\cdot N x$ for all  $x\neq 0$), or equivalently in sense that the symmetric part $(M+M^T-N-N^T)/2$ of $M-N$ is positive (resp. positive definite) as a symmetric matrix.

We denote by $\mathcal P(\R^m)$ the set of probability measures on  $\R^m$.

\section{Contraction for diffusions}\label{Sec:courbure_general}

This section is devoted to the study of the general case of a  diffusion $(Z_t)_{t\geqslant 0}$ on $\R^d$ that solves
\begin{equation}\label{eq:EDSgenerale}
\dd Z_t \ = \ b(Z_t) \dd t + \Sigma \dd W_t\,,
\end{equation}
where $(W_t)_{t\geqslant 0}$ is a standard Brownian motion on $\R^d$, with:
\begin{assu}\label{assu}
The drift $b\in\mathcal C^\infty(\R^d,\R^d)$ with $\|\partial^{\alpha} b\|_\infty < +\infty$ for all $\alpha\in\N^d$ such that $|\alpha|\geqslant 1$. The diffusion matrix  $\Sigma \in \mathcal M_{d,d}^{sym\geqslant 0}(\R)$  is constant.
\end{assu}

In particular, $b$ is globally Lipschitz. The condition on the higher order derivatives is meant to ensure the validity of some computations with minimal technical considerations. It is not very important since, when it is not satisfied, it may be enough to apply the results with some drifts $(b_n)_{n\geqslant 0}$ that satisfy it and converge to $b$  uniformly on all compact sets and then to let $n\rightarrow +\infty$ (as we will see, our results do not involve these derivatives). 

An important point is that $\Sigma$ is not required to be definite.

The drift being Lipschitz, equation  \eqref{eq:EDS} admits a unique strong non-explosive solution for all initial condition  $z\in\R^d$. We denote by  $(P_t)_{t\geqslant 0}$ the associated Markov semigroup, given by $P_t f(z) = \mathbb E(f(Z_t)|Z_0=z)$ for measurable bounded $f$, $t\geqslant 0$ and $z\in\R^d$. For $k,n\in\N_*$, write $\phi_k(z) = 1/(1+|z|^k)$ and
\[\mathcal C_k^n(\R^d) \ = \ \{f\in\mathcal C^n(\R^d),\ \|\phi_k\partial^\alpha f \|_\infty <+\infty\ \forall \alpha\in\N^d,\ |\alpha|\in \cco 0,n\ccf\}\,.\]
Under Assumption~\ref{assu}, for all $k,n\in\N_*$ there exists $\lambda_{n,k}$ such that for all $f\in\mathcal C_k^n(\R^d)$ and all $t\geqslant 0$, $P_t f \in \mathcal C_k^n(\R^d)$ with$\|P_t f\|_{n,k} \leqslant \exp(\lambda_{n,k}t)\|f\|_{n,k}$, where $\|f\|_{n,k} = \sum_{|\alpha|\leqslant n}\|\phi_k \partial^\alpha f\|_\infty$ (see for instance the proof of \cite[Theorem 2.5]{EthierKurtz}). For all $k\in\N$, $f\in \mathcal C^\infty_k$, $\alpha\in\N^d$ and $t\geqslant 0$, we have $\partial_t (\partial^\alpha P_t f) = \partial^\alpha LP_t f$. 
 Moreover, writing $\psi_\beta(z) = \exp(\beta |z|^2)$, we have that, for all  $t\geqslant 0$, there  exist $\beta_t,M_t>0$ such that for all $s\in[0,t]$ and $z\in\R^d$, $P_s \psi_{\beta_t}(z) \leqslant \exp(M_t(1+|z|^2))$ (this follows for instance from \eqref{eq:GronwallsupBrownien} below).

\subsection{Main result}\label{Subsec:resultat_courbure}

For $z\in\R^d$ we denote by $J_b(z)=(\partial_{z_j}b_i(z))_{1\leqslant i,j\leqslant d}$ the Jacobian matrix of $b$ (the index $i$ being for the line, $j$ for the column). For $M\in\mathcal M_{d,d}^{sym>0}(\R)$ and $\rho\in\R$ we say that the drift $b$ induces a right-contraction of the metric $M$ at rate $\rho$ if the following condition holds:
\begin{equation}
\forall z\in\R^d,\qquad MJ_b(z) \leqslant - \rho M\tag{$\Contd$}
\end{equation}
(recall that   $A\leqslant B$ for possibly non-symmetric matrices is understood in the sense of the associated quadratic forms). The goal of this section is to study the consequences of the condition $\Contd$. Actually, the word \emph{contraction} is disputable when $\rho\leqslant 0$, but we are mainly interested in the case   $\rho>0$. 

Let us start with a few simple remarks. For $M\in\mathcal M_{d,d}^{sym>0}(\R)$ and $\rho\in\R$,  we say that the drift $b$ induces a left-contraction of the metric $M$ at rate $\rho$ if the following condition holds:
\begin{equation}
\forall z\in\R^d,\qquad J_b(z)M \leqslant - \rho M\,.\tag{$\Contg$}
\end{equation}
Then, for given $M,\rho$, the conditions $\Contd$ and $\nvv{\mathrm{Cont}_L}(M^{-1},\rho)$ are equivalent, since
\[x\cdot M J_b(z) x \ = \ (Mx)\cdot J_b(z)M^{-1} Mx\qquad \text{and}\qquad x\cdot  M x \ = \ (Mx)\cdot M^{-1}Mx\]
for all $x,z\in\R^d$. Similarly, writing that, for $z,y\in\R^d$,
\[(z-y)\cdot M \po b(z)-b(y)\pf \ = \ (z-y)\cdot M \po \int_0^1 J_b(sz+(1-s)y) \dd s \pf (z-y)\,,\]
we see that $\Contd$ is of course equivalent to
\begin{equation}\label{eq:contract_integre}
\forall z,y\in\R^d\,,\qquad (z-y)\cdot M \po b(z)-b(y)\pf \ \leqslant \ -\rho \|z-y\|^2_M\,,
\end{equation}
the converse implication being proven by taking  $y=z+\varepsilon x$ with $\varepsilon\rightarrow 0$. Moreover, for $Q\in\mathcal M_{d,d}^{sym>0}(\R)$, notice that  $b$ satisfies  $\Contd$ with $M=Q^T Q$ if and only if $\tilde b$ given by
\[\tilde b(z) \  = \ Q b\po Q^{-1}z\pf\,,\]
satisfies $\nvv{\mathrm{Cont}_R}(I_d,\rho)$. In other words, we can always reduce the study to the case $M=I_d$ through the change of variable $z\leftarrow M^{1/2}z$. That being said, in the following, we will still state the result with a general $M$ as, in many cases, the process of interest is the initial $Z_t$, and several matrices $M$ may be involved simultaneously.  

Finally, under the condition $\Contd$, for any fixed $z\in \R^d$, the deterministic flow that solves $\dot y_t = J_b(z)y_t$ is such that $\|y_t\|_M \leqslant e^{-\rho t} \|y_0\|_M$. This implies in particular that the eigenvalues of $J_b(z)$ necessarily have reals parts lower than $-\rho$, for all $z\in\R^d$ (this condition is not sufficient to get $\Contd$\nvv{, see Proposition~\ref{prop:NSconditionLangevin} and the remarks afterwards}).

\medskip


Recall the definition of the   Wasserstein distances (here associated to Euclidean distances). For $M\in\mathcal M_{d,d}^{sym>0}(\R)$, writing $\mathrm{dist}_M:\R^d\times\R^d  \ni (z,y) \mapsto \|z-y\|_M\in \R_+$, we define for $r\in[ 1,+\infty]$ 
 and $\nu,\mu \in \mathcal P(\R^d)$ 
\[\mathcal W_{M,r}(\nu,\mu) \ = \ \inf_{\pi \in \Pi(\nu,\mu)} \|\mathrm{dist}_M\|_{L^r(\pi)}\]
where $\Pi(\nu,\mu)$ is the set of transference plan between $\nu$ and $\mu$, namely the set of probability laws on $\R^d\times\R^d$ with $d$-dimensional marginals $\nu$ and $\mu$. In other words, the law of a random variable    $(Z,Y)\in\R^d\times\R^d$ belongs to $\Pi(\nu,\mu)$ if $Z\sim \nu$ and $Y\sim \mu$. Such a random variable is called a coupling\footnote{The word coupling is sometimes also used to name $\pi$, instead of transference (or transport) plan.} of $\nu$ and $\mu$. 
Note that we have not restrained the definition to probability measures with finite moments of order $r$,  so that possibly $\mathcal W_{M,r}(\nu,\mu)=+\infty$. There always exists $\pi\in\Pi(\nu,\mu)$ for which the infimum is attained \cite{Villani}.

For a measurable function $f$ on $\R^d$  we write
\[\|\na f(z)\|_{M^{-1}}  \ := \ \lim_{r \downarrow 0}\po \sup \left\{ \frac{|f(z)-f(y)|}{\|y-z\|_M},\ y\in\R^d,\ 0<\|z-y\|_{M}\leqslant r\right\}\pf\,.\]
In particular, if $f$ is differentiable in $z$, this definition is consistent with  $\|\na f(z)\|_{M^{-1}} = \sqrt{\na f(z) \cdot M^{-1}\na f(z)}$. We also write
\[\|\na f\|_{M^{-1},\infty}=\sup \left\{ \frac{|f(z)-f(y)|}{\|y-z\|_M},\ z,y\in\R^d,\ z\neq y\right\} \]
 and $\mathcal C_{b,L}(\R^d)$ the set of bounded Lipschitz functions on $\R^d$.

\begin{thm}\label{thm:contraction}
Under Assumption~\ref{assu}, for $M\in\mathcal M_{d,d}^{sym>0}(\R)$ and $\rho\in\R$, the following propositions are equivalent:
\begin{enumerate}[label=(\roman*)]
\item\label{item_contraction_Contd} $\Contd$.
\item\label{item_contraction_couplage} For all $z_1,z_2\in\R^d$, if $(Z_{i,t})_{t\geqslant 0}$ for  $i=1,2$ are the solutions \eqref{eq:EDSgenerale} driven by a same Brownian motion $W$ with initial conditions $Z_{i,0} = z_i$ then, almost surely,
\[\forall t\geqslant 0,\ \qquad \|Z_{1,t} - Z_{2,t} \|_{M} \ \leqslant \ e^{-\rho t}\|z_1-z_2\|_M\,.\]
\item\label{item_contraction_Winfini} For all $\nu,\mu\in\mathcal{P}(\R^d)$ and $t\geqslant 0$,
\[\mathcal W_{M,\infty}\po \nu P_t,\mu P_t\pf \ \leqslant \ e^{-\rho t}\mathcal W_{M,\infty}(\nu,\mu)\,.\]
\item\label{item_contraction_W2} For all $\nu,\mu\in\mathcal{P}(\R^d)$ and $t\geqslant 0$,
\begin{equation}\label{eq:contractW2}
\mathcal W_{M,1}\po \nu P_t,\mu P_t\pf \ \leqslant \ e^{-\rho t}\mathcal W_{M,1}(\nu,\mu)\,.
\end{equation}
\item\label{item_contraction_gradP_t} For all $f\in \mathcal C_{b,L}(\R^d)$, $t\geqslant 0$ and $z\in\R^d$,
\begin{equation}\label{eq:gradient/semigroupe}
\|\na P_t f(z)\|_{M^{-1}} \ \leqslant \ e^{-\rho t} P_t \po \|\na  f\|_{M^{-1}} \pf(z)\,.
\end{equation}
\item\label{item_contraction_gradP_t2} For all $f\in \mathcal C_{b,L}(\R^d)$ and $t\geqslant 0$,
\[\|\na P_t f\|_{M^{-1},\infty} \ \leqslant \ e^{-\rho t} \|\na  f\|_{M^{-1},\infty}  \,.\]
\end{enumerate}
\end{thm}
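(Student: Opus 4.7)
The plan is to arrange the six conditions around the almost sure parallel coupling of $(ii)$, exploiting crucially that $\Sigma$ is constant so the driving noise cancels in the difference of two coupled solutions. I would first establish $(i) \Leftrightarrow (ii)$ by parallel coupling, then derive $(iii),(iv),(v)$ from $(ii)$, observe $(v) \Rightarrow (vi)$ for free, and close the cycle via $(vi) \Rightarrow (i)$; the direct implications $(iii),(iv) \Rightarrow (vi)$ will follow independently from Kantorovich--Rubinstein duality, so that any of the six conditions implies all the others.

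For $(i) \Leftrightarrow (ii)$, with common Brownian motion the difference $D_t := Z_{1,t} - Z_{2,t}$ solves the pathwise ODE $\dot D_t = b(Z_{1,t}) - b(Z_{2,t})$, so \eqref{eq:contract_integre} combined with Gronwall yields $\|D_t\|_M^2 \leqslant e^{-2\rho t}\|D_0\|_M^2$ almost surely; the converse follows by setting $z_2 = z_1 + \varepsilon u$ and letting $\varepsilon \downarrow 0$. For $(ii) \Rightarrow (iii)$ and $(ii) \Rightarrow (iv)$, propagate an optimal transference plan for $\mathcal W_{M,\infty}$ (resp.\ $\mathcal W_{M,1}$) of $\nu,\mu$ at time $0$ through the parallel coupling, then take essential supremum (resp.\ expectation) of the almost sure bound. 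For $(ii) \Rightarrow (v)$ on smooth $f$, the coupling and the fundamental theorem of calculus give
\[|P_t f(z) - P_t f(z + \varepsilon u)| \ \leqslant \ \mathbb E\Bigl[\|D_t\|_M \int_0^1 \|\nabla f\|_{M^{-1}}\bigl(Z_{1,t} + s D_t\bigr)\,\dd s\Bigr];\]
dividing by $\varepsilon$, using $\|D_t\|_M \leqslant e^{-\rho t}\varepsilon\|u\|_M$ from $(ii)$, and sending $\varepsilon \downarrow 0$ by dominated convergence (the integrand being bounded by $\|\nabla f\|_{M^{-1},\infty}$) collapses the integrand to $\|\nabla f\|_{M^{-1}}(Z_{1,t})$ and produces $(v)$; a general $f \in \mathcal C_{b,L}$ is reached by mollification. $(v) \Rightarrow (vi)$ is immediate, and for $(iii),(iv) \Rightarrow (vi)$ one applies Kantorovich--Rubinstein duality together with $\mathcal W_{M,1} \leqslant \mathcal W_{M,\infty}$ to the test measures $\delta_z, \delta_y$.

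The step I expect to require the most care is $(vi) \Rightarrow (i)$. For fixed $z_0 \in \R^d$ and $u \neq 0$, I would pick an odd bounded smooth $\phi: \R \to \R$ with $\phi'(0) = 1$, $\phi''(0) = \phi'''(0) = 0$ and $|\phi'| \leqslant 1$ (e.g.\ $\phi(x) = x/(1+x^4)$), and set $f(z) := \phi(u \cdot (z - z_0))$; then $\nabla f(z) = \phi'(u \cdot (z-z_0))\, u$, so $\|\nabla f\|_{M^{-1},\infty} = \|\nabla f(z_0)\|_{M^{-1}} = \|u\|_{M^{-1}}$, while the vanishing of $\phi''(0)$ and $\phi'''(0)$ kills the diffusion contribution to $\nabla L f(z_0)$ leaving $\nabla L f(z_0) = J_b^T(z_0) u$. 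From $(vi)$ we have $\|\nabla P_t f(z_0)\|_{M^{-1}} \leqslant \|\nabla P_t f\|_{M^{-1},\infty} \leqslant e^{-\rho t}\|u\|_{M^{-1}}$, an inequality saturated at $t=0$ with common value $\|u\|_{M^{-1}} > 0$; differentiating in $t$ at $0^+$ (all quantities being smooth there thanks to Assumption~\ref{assu}) yields $u \cdot M^{-1} J_b^T(z_0) u \leqslant -\rho \|u\|_{M^{-1}}^2$. Using $(M^{-1} J_b^T)^T = J_b M^{-1}$, the quadratic form equality $u \cdot M^{-1} J_b^T(z_0) u = u \cdot J_b(z_0) M^{-1} u$ recasts this bound as the left-contraction $\mathrm{Cont}_L(M^{-1}, \rho)$ at $z_0$, which is equivalent to $\Contd$ by the preliminary remarks, and arbitrariness of $z_0, u$ finishes $(i)$. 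The subtle technical point throughout is ensuring that the chosen $f$ simultaneously saturates its Lipschitz semi-norm at $z_0$ and zeroes out the higher-order local contributions of $L$ at $z_0$, which is precisely what the construction of $\phi$ arranges.
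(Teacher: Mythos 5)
Your cycle is essentially the paper's: both go $\ref{item_contraction_Contd}\Rightarrow\ref{item_contraction_couplage}\Rightarrow\ldots\Rightarrow\ref{item_contraction_gradP_t2}\Rightarrow\ref{item_contraction_Contd}$ with $\ref{item_contraction_W2}$ as a side branch, and both exploit the cancellation of the constant noise in the parallel coupling. Two steps are genuinely done differently. For $\ref{item_contraction_couplage}\Rightarrow\ref{item_contraction_gradP_t}$ you use the fundamental theorem of calculus inside the expectation, whereas the paper factors through $\ref{item_contraction_Winfini}$ and then uses Kuwada's local Lipschitz modulus $G_r$ (defined in \eqref{eq:defGr}). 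Your version is cleaner when $f$ is smooth, but the paper's $G_r$ device is precisely what makes the argument work for all $f\in\mathcal C_{b,L}$ without further ado. For $\ref{item_contraction_gradP_t2}\Rightarrow\ref{item_contraction_Contd}$ the paper uses a compactly supported cutoff $\theta$ equal to the identity near $0$, integrates $|\nabla P_t f_x|^2$ against a test function $\varphi$ supported near $z_0$, and argues by contradiction; you instead choose $\phi$ with $\phi''(0)=\phi'''(0)=0$ so that the diffusion contribution to $\nabla Lf(z_0)$ vanishes \emph{at the point}, and you read off the contraction directly from $\psi'(0^+)\leqslant -2\rho\|u\|_{M^{-1}}^2$. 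Both constructions encode the same idea (locally linear test function saturating the Lipschitz norm with no second- or third-order obstruction), but your pointwise version is more direct and arguably tidier, while the paper's integrated version is robust to more delicate pointwise regularity.

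One place where you should be more careful is the mollification step for $\ref{item_contraction_couplage}\Rightarrow\ref{item_contraction_gradP_t}$ with a general $f\in\mathcal C_{b,L}$. The inequality you want has $P_t(\|\nabla f\|_{M^{-1}})(z)$ on the right, and $\|\nabla f\|_{M^{-1}}$ here is the local Lipschitz modulus, not a classical gradient. Replacing $f$ by $f_\delta = f*\rho_\delta$, Jensen gives $\|\nabla f_\delta\|_{M^{-1}}\leqslant \rho_\delta*\|\nabla f\|_{M^{-1}}$ pointwise, so your inequality for $f_\delta$ yields $\|\nabla P_t f_\delta(z)\|_{M^{-1}}\leqslant e^{-\rho t}P_t(\rho_\delta*\|\nabla f\|_{M^{-1}})(z)$. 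To let $\delta\to 0$ on the right you need $P_t(\rho_\delta*\|\nabla f\|_{M^{-1}})(z)\to P_t(\|\nabla f\|_{M^{-1}})(z)$; this is immediate if $\delta_z P_t$ has a density, but Assumption~\ref{assu} allows $\Sigma$ degenerate (even $\Sigma=0$), in which case $\delta_z P_t$ may be singular and this convergence can fail for a merely bounded measurable integrand. The clean fix is to do what the paper does and argue directly with $G_r$ (write $|f(Z_{1,t})-f(Z_{2,t})|\leqslant \|D_t\|_M\,G_{\|D_t\|_M}(Z_{1,t})$, use $\|D_t\|_M\leqslant e^{-\rho t}\varepsilon\|u\|_M$, then monotone/dominated convergence as $\varepsilon\downarrow 0$), which avoids mollification entirely. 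Note that this subtlety does not affect your cycle $\ref{item_contraction_Contd}\Rightarrow\ref{item_contraction_couplage}\Rightarrow\ref{item_contraction_gradP_t}\Rightarrow\ref{item_contraction_gradP_t2}\Rightarrow\ref{item_contraction_Contd}$, which only needs $\ref{item_contraction_gradP_t}$ and $\ref{item_contraction_gradP_t2}$ for smooth $f$; it only matters for the full statement of $\ref{item_contraction_gradP_t}$ over all of $\mathcal C_{b,L}$. Also, your one-line sketch of $\ref{item_contraction_couplage}\Rightarrow\ref{item_contraction_Contd}$ (``let $\varepsilon\downarrow 0$'') conceals what the paper handles with a genuinely delicate small-time estimate conditional on the Brownian staying small; since you close the cycle through $\ref{item_contraction_gradP_t2}\Rightarrow\ref{item_contraction_Contd}$ this does not affect correctness, but the direct implication is not as immediate as you suggest.
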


\noindent\textbf{Remarks:}
\begin{itemize}
\item Some implications are easy, like $\ref{item_contraction_Winfini} \Rightarrow \ref{item_contraction_W2}$ and $\ref{item_contraction_gradP_t} \Rightarrow \ref{item_contraction_gradP_t2}$.  Except for establishing $\ref{item_contraction_Contd} \Rightarrow \ref{item_contraction_couplage}$, the fact that the process \eqref{eq:EDSgenerale} is a diffusion is only used to close the cycle with the implication $\ref{item_contraction_gradP_t2} \Rightarrow \ref{item_contraction_Contd}$. 
\item The equivalences $ \ref{item_contraction_Winfini}\Leftrightarrow \ref{item_contraction_gradP_t}$ and $\ref{item_contraction_W2} \Leftrightarrow \ref{item_contraction_gradP_t2}  $ are true in a much more general case according to   \cite[Theorem 2.2]{Kuwada2}. In particular, the factor $e^{-\rho t}$ may be replaced by any $c(t)$, for instance $C e^{-\rho t}$ with $C>1$.  However, the fact that $c(0)=1$ for $c(t)=e^{-\rho t}$ is crucial for the implication $\ref{item_contraction_gradP_t2} \Rightarrow \ref{item_contraction_Contd}$. Notice that we won't use the results of Kuwada; more precisely we will recall his arguments for the implication $ \ref{item_contraction_Winfini}\Rightarrow \ref{item_contraction_gradP_t}$  which is the easy half, and we won't prove directly the converse implication which, in our particular case, will ensue from the whole chain of proven implications.  
\item The proof of the implication $\ref{item_contraction_Contd} \Rightarrow \ref{item_contraction_couplage}$ works in very general cases, possibly non Markovian, for instance if $Z_{1,t}$ and $Z_{2,t}$ solves an equation of the form  
\begin{equation*} 
\dd Z_{i,t} \ = \ b(Z_{i,t}) \dd t + \dd X_t
\end{equation*}
where $(X_t)_{t\geqslant 0}$ is any stochastic process (the Markovian case corresponding then to the case where $(X_t)_{t\geqslant 0}$ is a Levy process). See also   Section~\ref{Subsec:PDMP} for another example.
\end{itemize}

\begin{proof}
We will prove the implications $\ref{item_contraction_Contd} \Rightarrow \ref{item_contraction_couplage} \Rightarrow \ref{item_contraction_Winfini} \Rightarrow \ref{item_contraction_gradP_t} \Rightarrow \ref{item_contraction_gradP_t2} \Rightarrow \ref{item_contraction_Contd}$, as well as $ \ref{item_contraction_Winfini} \Rightarrow \ref{item_contraction_W2} \Rightarrow \ref{item_contraction_gradP_t2}$, which will conclude. We will also give a direct proof of  $\ref{item_contraction_couplage} \Rightarrow \ref{item_contraction_Contd}$, which give an autonomous proof of the equivalence   $\ref{item_contraction_Contd} \Leftrightarrow \ref{item_contraction_couplage}$. Finally, we will give a formal proof of   $\ref{item_contraction_Contd}\Rightarrow\ref{item_contraction_gradP_t}$ by a semigroup interpolation argument in the spirit of  Bakry-Emery $\Gamma$ calculus.

For each  implication, without loss of generality (by the change of variable $z\leftarrow M^{1/2}z$) we only consider the case   $M=I_d$. We write $\mathcal W_p = \mathcal W_{I_d,p}$ and $|\na f|=\|\na f\|_{I_d}$.

\medskip

\noindent\textbf{Implication $\ref{item_contraction_Contd} \Rightarrow \ref{item_contraction_couplage}$.}  The parallel coupling $(Z_{1,t},Z_{2,t})_{t\geqslant 0}$ gives, denoting $Z_{\Delta,t} = Z_{1,t}-Z_{2,t}$,
 \begin{eqnarray*}
 \dd \po   | Z_{\Delta,t} |^2  \pf  & = & 2    Z_{\Delta,t} \cdot  \po b(Z_{1,t}) - b(Z_{2,t})\pf \dd t\\
 & = & 2   Z_{\Delta,t} \cdot  \int_0^1 J_b\po s Z^1_t + (1-s)Z^2_t\pf Z_{\Delta,t} \dd s \dd t\\
 & \leqslant & - 2 \rho |Z_{\Delta,t}|^2 \dd t\,,
 \end{eqnarray*}
in other words, almost surely, $\dd (e^{\rho t} |Z_{\Delta,t}|^2) \leqslant 0$ for all $t\geqslant 0$, which concludes.
 
 \medskip

\noindent\textbf{Implication $\ref{item_contraction_couplage} \Rightarrow \ref{item_contraction_Contd}$.}   We fix $z_1,z_2\in\R^d$ and consider the parallel coupling of two  diffusions $(Z_{1,t},Z_{2,t})$  with respective initial conditions   $z_1$ and $z_2$, driven by the same Brownian motion $W$. Write $m_t = \sup_{s\in[0,t]}|\Sigma W_s|$. First, notice that, for all   $t\geqslant 0 $ and $\varepsilon>0$, the event $\mathcal G(t,\varepsilon) = \left\{m_t\leqslant\varepsilon\right\}$ has a non-zero probability. Moreover, since
 \[Z_{i,s} \ = \ z_i +\int_0^s b(Z_{i,u})\dd u + \Sigma W_s\]
for $i=1,2$ and $s\geqslant 0$,  we get  
 \[\forall s\in[0,t],\qquad |Z_{i,s}-z_i| \ \leqslant\  t|b(z_i)| + c\int_0^s |Z_{i,u}-z_i|\dd u + m_s\]
with $c=\|J_b\|_{\infty}$, and thus, almost surely
 \begin{equation}\label{eq:GronwallsupBrownien}
\sup_{s\in[0,t]}|Z_{i,s} - z_i| \leqslant (|b(z_i)| t+m_t) e^{ct}\,. 
 \end{equation}
  Let $t_0\leqslant 1/2$ be small enough so that, under the event $\mathcal G(t,1/2)$, the right hand side is smaller than $1$ for   $i=1,2$.
 Denote by $\mathcal B(x,r)$ the Euclidean ball of $\R^d$ with center $x$  and radius $r$, and $\Phi(x,y)=(x-y) \cdot (b(x)-b(y))$. Let $C>0$ be such that for all  $y_i\in \mathcal B(z_i,1)$, $i=1,2$,
 \[  \left|\Phi(y_1,y_2)- \Phi(z_1,z_2)\right| \ \leqslant \ C\po |y_1-z_1|+|y_2-z_2|\pf\,. \]
 For all $t\in[0,t_0]$, under the event $\mathcal G(t,t) $,
 \[\left|t\Phi(z_1,z_2) - \int_0^t \Phi(Z_{1,s},Z_{2,s})\dd s \right|\ \leqslant \ C t^2 (|b(z_1)|+|b(z_2)| +1) e^{ct} \ \leqslant \ C't^2\]
 for a deterministic $C'>0$. As a consequence, for $t\in[0,t_0]$, the event  $\mathcal G(t,t)$ implies that
 \[\left| |Z_{\Delta,t}|^2- |Z_{\Delta,0}|^2 + 2 t\Phi(z_1,z_2)\right|  \ \leqslant \ 2C' t^2\,.\]
 On the other hand, the event
 \[\mathcal H \ :=\ \left\{\forall t\geqslant 0,\ |Z_{\Delta,t}|^2 \leqslant e^{-2\rho t}|Z_{\Delta,0}|^2\right\}\]
 having by assumption a probability 1, the event  $\mathcal H \cap \mathcal G(t,t)$ has a non-zero probability for all $t\in[0,t_0]$. Yet, this event implies that  
 \[|z_1-z_2|^2 - 2 t\Phi(z_1,z_2) \ \leqslant \ e^{-2\rho t}|z_1-z_2|^2 + C't^2\,, \]
which is a completely deterministic condition. It is thus satisfied for all   $t\in[0,t_0]$, and letting $t$ go to  $0$ we get that $\Phi(z_1,z_2) \leqslant - \rho |z_1-z_2|^2$, which is exactly the integrated version \eqref{eq:contract_integre} of $\Contd$.
 
  \medskip

\noindent\textbf{Implication $\ref{item_contraction_couplage} \Rightarrow \ref{item_contraction_Winfini}$.} It is sufficient to consider the case $\mu =\delta_{z_1}$ and $\nu=\delta_{z_2}$ for all $z_1,z_2\in\R^d$, the general case following by conditioning with respect to the initial condition. Given    $z_1,z_2\in\R^d$ and a Brownian motion $W$, we consider the parallel coupling $(Z_{1,t},Z_{2,t})_{t\geqslant 0}$ driven by  $W$. Let $t\geqslant 0$. Almost surely, $|Z_{1,t}-Z_{2,t}|\leqslant e^{-\rho t}|z_1-z_2|$, and since $(Z_{1,t},Z_{2,t})$ is a coupling of $\delta_{z_1}P_t$ and $\delta_{z_2}P_t$, this implies that $\mathcal W_\infty(\delta_{z_1}P_t,\delta_{z_2}P_t) \leqslant e^{-\rho t}|z_1-z_2|$.

  \medskip

\noindent\textbf{Implication $\ref{item_contraction_Winfini} \Rightarrow \ref{item_contraction_W2}$.} As above, it is sufficient to treat the case of Dirac masses, for which   $\mathcal W_1(\delta_{z_1},\delta_{z_2}) = \mathcal W_{\infty}(\delta_{z_1},\delta_{z_2}) =|z_1-z_2|$. Besides, for all  $\nu,\mu \in \mathcal P(\R)$ and all transference plan $\pi\in\Pi(\nu,\mu)$, $\|\mathrm{dist}\|_{L^1(\pi)} \leqslant \|\mathrm{dist}\|_{L^\infty(\pi)} $, which implies that $\mathcal W_1(\nu,\mu) \leqslant \mathcal W_{\infty}(\nu,\mu)$ for all $\nu,\mu\in\mathcal P(\R^d)$. The conclusion follows from
\[\mathcal W_1\po \delta_{z_1}P_t,\delta_{z_2} P_t\pf  \leqslant \mathcal W_\infty\po \delta_{z_1}P_t,\delta_{z_2} P_t\pf \leqslant e^{-\rho t}|z_1-z_2|\,.\]
 
  \medskip

\noindent\textbf{Implication $\ref{item_contraction_Winfini} \Rightarrow \ref{item_contraction_gradP_t}$.} We recall the proof of   \cite[Proposition 3.1]{Kuwada1}. Fix  $f\in\mathcal C_{b,L}(\R^d)$ and, for $r>0$ and $z\in\R^d$, let
\begin{equation}\label{eq:defGr}
G_r(z) \ = \ \sup_{y\in\mathcal B(z,r)\setminus\{z\}} \frac{|f(y)-f(z)|}{|y-z|}\,,
\end{equation}
so that for all $y,z\in\R^d$,
\[|y-z|\leqslant r \qquad \Rightarrow \qquad |f(y)-f(z)|\leqslant r G_r(z)\,.\]
Fix  $t\geqslant 0$, $y,z\in \R^d$ and $\pi_{y,z}\in\Pi(\delta_y P_t,\delta_z P_t)$ such that $|Y-Z|\leqslant e^{-\rho t}|z-y|$ almost surely with $(Y,Z)\sim \pi_{y,z}$. Then, we have
\begin{eqnarray*}
\left| P_t f(y) - P_t f(z)\right|  & \leqslant & \mathbb E \po |f(Y)-f(Z)|\pf \\ & \leqslant &   e^{-\rho t}|y-z|\mathbb E \po G_{e^{-\rho t}|y-z|}(Z)\pf \ = \ e^{-\rho t}|y-z| P_t\po  G_{e^{-\rho t}|y-z|}\pf (z)\,.
\end{eqnarray*}
Since $f$ is Lipschitz, $G_r(z)$ is uniformly bounded by  $\|\na f\|_\infty$, and by the dominated convergence theorem we get the convergence of  $P_t\po  G_{e^{-\rho t}|y-z|}\pf (z)$ toward $P_t(|\na f|)(z)$ as $y\rightarrow z$, which concludes.

  \medskip

\noindent\textbf{Implication $\ref{item_contraction_gradP_t} \Rightarrow \ref{item_contraction_gradP_t2}$.} Straightforward since $\|P_t  |\na  f|  \|_\infty \leqslant \|   \na  f\|_{\infty} $. 

  \medskip

\noindent\textbf{Implication $\ref{item_contraction_gradP_t2} \Rightarrow \ref{item_contraction_Contd}$.}  We adapt \cite[Proof of $(v)\Rightarrow (i)$]{VonRenesseSturm}. By contradiction, we assume   $\ref{item_contraction_gradP_t2}$ and that there exist $z,x\in\R^d$ with $|x|=1$ such that $x\cdot J_b(z)x = -(\rho-\varepsilon)$ for some $\varepsilon>0$. Let $\mathcal U$ be a neighborhood of $z$ such that $x\cdot J_b(y)x \geqslant -(\rho-\varepsilon/2)$ for all $y\in\mathcal U$. Let $\theta\in\mathcal C^\infty(\R)$ be such that $\theta(r)=r$ for $r\in[-1,1]$, $\|\theta'\|_\infty \leqslant 1$, $\theta(r)=0$ for $r\notin [-3,3]$ (in particular, $\|\theta^{(k)}\|_\infty < +\infty$ for all $k\geqslant 0$). Consider the function $f_x$ on $\R^d$ given by $f_x(y) = \theta((y-z)\cdot x)$ for all $y\in\R^d$, which is $\mathcal{C^\infty}$ and $1$-Lipschitz. From $\ref{item_contraction_gradP_t2} $, for all  non-negative and compactly supported $\varphi \in \mathcal C^\infty(\R^d)$,
\[K_{\varphi}(t) \ :=\ \int_{\R^d} |\na P_t f_x|^2(y) \varphi(y)\dd y \ \leqslant \ e^{-2\rho t} \int_{\R^d}   \varphi(y)\dd y \,.\]
Besides, for all $y\in\mathcal B(z,1/2)$, $\na f_x(y) = x$ and $\na^2 f_x(y)=0$. Fixing $\varphi\in\mathcal C^\infty(\R^d)$ non-zero, non-negative with compact support included in $\mathcal U\cap \mathcal B(z,1/2)$, we get that  $K_\varphi(0) = \int_{\R^d} \varphi(y)\dd y$ and
\begin{align*}
K_\varphi'(0) &= 2 \int_{\R^d} \na   f_x(y) \cdot \na L   f_x(y) \varphi(y)\dd y \\
& = 2 \int_{\R^d}    \na   f_x(y) \cdot  \po J_b(y) \na f_x(y) + L  (\na f_x)(y)\pf \varphi(y)\dd y\\
& \geqslant -2 (\rho-\varepsilon/2) \int_{\R^d} \varphi(y)\dd y \ = \  -2 (\rho-\varepsilon/2) K_\varphi(0)\,. 
\end{align*}
This is in contradiction with the fact  $K_\varphi(t) \leqslant e^{-2\rho t}K_\varphi(0)$ for all  $t\geqslant 0$, which concludes.

  \medskip

\noindent\textbf{Implication $\ref{item_contraction_W2} \Rightarrow \ref{item_contraction_gradP_t2}$.} This is similar to $\ref{item_contraction_Winfini} \Rightarrow \ref{item_contraction_gradP_t}$, and simpler (here this is the classical case of the Kantorovich-Rubinstein duality, see \cite{Villani}). For $f\in\mathcal C_{b,L}(\R^d)$ and $z,y\in\R^d$,   $\pi_{y,z}\in\Pi(\delta_y P_t,\delta_z P_t)$ and $(Y,Z)\sim\pi_{y,z}$,
\[\left| P_t f(y) - P_t f(z)\right|  \leqslant \mathbb E \po |f(Y)-f(Z)| \pf \leqslant \|\na f\|_\infty \mathbb E \po |Y-Z| \pf \,.
\]
The conclusion follows from taking the infimum over $\Pi(\delta_y P_t,\delta_z P_t)$ and using  $\ref{item_contraction_W2} $ and the fact $\mathcal W_1(\delta_y,\delta_z)=|y-z|$.


  \medskip

\noindent\textbf{Implication $\ref{item_contraction_Contd} \Rightarrow \ref{item_contraction_gradP_t}$.} The semigroup interpolation method that directly gives this implication is interesting, and thus we present it here. Nevertheless, since it is not necessary for the completeness of our proof, we will only give a formal proof, without considering some questions of differentiability and integrability   (in particular when $|\na P_s f(z)|$ vanishes).

 We follow the approach of \cite[Lemma 2.4]{Bakry} in the classical case of Bakry-Émery calculus (see also the more recent  \cite[Proposition 3.6]{Menegaki} in a non-reversible case). Fix $f\in\mathcal C_{b,L}(\R^d)$, $t\geqslant 0$ and $z\in\R^d$.  For $s\in [0,t]$, we set
 \[\varphi(s) \ = \ P_s |\na P_{t-s}f|(z) ,\]
 which interpolates between $\varphi(0)=|\na P_t f|$ and $\varphi(t) = P_t|\na f|$. Writing $g=P_{t-s}f$, we compute
\[\varphi'(s) \ = \  P_s\co L \po  |\na g| \pf +   \frac{\partial_s \po |\na g|^2 \pf}{2 |\na g |} \cf \,,\]
with
 \begin{eqnarray}\label{eq:revision}
  \partial_s \po |\na g|^2 \pf \ = \   -2 (\na g)^T \na( Lg)  &=& -2 (\na g)^T  J_b^T \na g  - 2 (\na g)^T L \po    \na g\pf \\
 &\geqslant & 2\rho | \na g|^2  - 2 (\na g)^T L \po    \na g\pf\nonumber \,.
 \end{eqnarray}
On the other hand, the diffusion property of $L$ ensures that, for $a(s)=\sqrt s$,
 \[L \po a(|\na g|^2)\pf \ = \ a'\po |\na g|^2\pf  L\po |\na g|^2\pf  + a''\po |\na g|^2\pf \Gamma\po |\na g|^2\pf \]
with $\Gamma(f)=1/2L(f^2)-fLf$ the carré du champ associated to $L$, here $\Gamma(f) = |\Sigma \na f|^2/2$. 
 At this stage, we have obtained that
 \begin{eqnarray*}
 \varphi'(s) &\geqslant & \rho \varphi(s) +  P_s\co  \frac{L\po |\na g|^2\pf - 2 (\na g)^T L \po    \na g\pf }{2 |\na g|}     - \frac{\Gamma\po |\na g|^2\pf}{4|\na g|^{3}} \cf\,.
 \end{eqnarray*}
 It only remains to prove that, for all $g$, 
 \begin{equation}\label{eq:demoGamma}
2|\na g|^2 \po L\po |\na g|^2\pf - 2 (\na g)^T L \po    \na g\pf \pf\ \geqslant \ \Gamma\po |\na g|^2\pf\,,
 \end{equation}
 since this will imply $\varphi'(s) \geqslant \rho \varphi(s)$ for all  $s\in[0,t]$, hence $\varphi(t) \geqslant e^{\rho t}\varphi(0)$, which will conclude. 
 Yet,
\[
 L\po |\na g|^2\pf - 2 (\na g)^T L \po    \na g\pf \  = \ 2\sum_{i=1}^d \Gamma(\partial_{z_i} g)\  = \ \sum_{i=1}^d \left|\partial_{z_i} \Sigma \na g\right|^2\,,
 \]
 while
 \[\nabla \po |\na g|^2\pf \ = \ 2\sum_{i=1}^d \partial_{z_i} g\ \partial_{z_i} \na g\,,\]
 so that
 \begin{equation}\label{eq:revision2}
 \Gamma\po |\na g|^2\pf \ = \ 2 \left|\sum_{i=1}^d \partial_{z_i} g\ \partial_{z_i} \Sigma\na g\right|^2\,.
 \end{equation}
Notice that here we used the fact that  the diffusion matrix is constant, so that  $\Sigma \na$ commutes with $\nabla$, as in the standard case of the Bakry-Émery calculus where the fact that we work with   $\Gamma$ instead of $|\na \cdot|^2$ yields a similar commutation. The inequality \eqref{eq:demoGamma} then reads
\[\sum_{j=1}^d\left|\sum_{i=1}^d a_i x_{i,j}\right|^2 \ = \ \left|\sum_{i=1}^d a_i x_i\right|^2 \ \leqslant \ \po \sum_{i=1}^d a_i^2\pf  \sum_{i=1}^d |x_i|^2 \ = \  \po \sum_{i=1}^d a_i^2\pf \sum_{j=1}^d\sum_{i=1}^d |x_{i,j}|^2 \]
with $a_i = \partial_{z_i} g(z) \in \R$ and $x_i =(x_{i,j})_{j\in\cco 1,d\ccf} = \partial_{z_i}\Sigma \nabla g(z)\in\R^d$  for  $i\in\cco 1,d\ccf$, hence ensues from  the Cauchy-Schwarz inequality.
\end{proof}

\noindent\textbf{Remarks:}

\begin{itemize}
\item As far as contractions of Wasserstein distances are concerned, we only wrote in the theorem the extremal points   $p=1$ and $p=+\infty$, but obviously the proof of the   implication $\ref{item_contraction_Winfini}\Rightarrow\ref{item_contraction_W2}$ also works to prove that for all $p\in]1,+\infty[$, the proposition
\begin{equation}\label{eq:Wp}
\forall \nu,\mu\in\mathcal{P}(\R^d),\ t\geqslant 0,\qquad \mathcal W_{p,M}\po \nu P_t,\mu P_t\pf \ \leqslant \ e^{-\rho t}\mathcal W_{p,M}(\nu,\mu)
\end{equation}
is implied by $\ref{item_contraction_Winfini}$ and implies $\ref{item_contraction_W2}$. Similarly, for $q\in]1,+\infty[$, the proposition
\begin{equation}\label{eq:gradq}
\forall f\in \mathcal C_{b,L}(\R^d),\ t\geqslant 0,\ z\in\R^d,\qquad \|\na P_t f(z)\|_{M^{-1}}^q \ \leqslant \ e^{-q\rho t} P_t \po \|\na  f\|_{M^{-1}}^q \pf(z)
\end{equation}
is implied by $\ref{item_contraction_gradP_t}$ and implies $\ref{item_contraction_gradP_t2}$. The equivalence between \eqref{eq:Wp} and \eqref{eq:gradq} with $1/p+1/q=1$ is here a particular case of    \cite[Theorem 2.2]{Kuwada2}.
\item By comparison with the classical Bakry-Émery calculus, we replaced the   carré du champ $\Gamma(f) = |\Sigma \na f|^2/2$ by an operator $\Phi(f) = |Q\na f|^2$ with a constant matrix $Q$ (that has nothing to do with $\Sigma$). The idea to consider general operators $\Phi$ traces back at least to \cite{Bakry,Ledoux}, but it does not seem to have been exploited before the recent works   \cite{Baudoin,Baudoin2,Menegaki,MoiGamma} on degenerated diffusions. Notice that an important property  has been used for the direct proof of   $\ref{item_contraction_Contd} \Rightarrow \ref{item_contraction_gradP_t}$ \nvv{(specifically, at \eqref{eq:revision} and \eqref{eq:revision2})}, which is that the operators $\Sigma \na $ and $Q\na$ commute.  Such a commutation condition is also present in  \cite[Theorems 9 and 10]{MoiGamma} or \cite[Lemma 2.7]{Baudoin} \nvv{(where, with the notations of \cite{Baudoin}, this commutation appears as $\Gamma(f,\Gamma^Z(f)) = \Gamma^Z(f,\Gamma(f))$)}. Here it is linked to the fact that we have restrained the study to Euclidean metric on the one hand, and diffusions with constant diffusion matrices on the other hand.  Most of the arguments can be straightforwardly adapted to the general case of a Riemannian manifold, provided such a commutation property holds. 

Notice that the equivalence between an inequality of the form $\Phi(P_t f) \leqslant e^{-\rho t}P_t \Phi(f)$ and $\Gamma_{\Phi} \geqslant \rho \Phi$ where $\Gamma_\Phi$ is the operator that naturally arises when considering the interpolation $P_{t-s}\Gamma(P_s f)$ (see e.g. \cite{MoiGamma}), is classical \cite{Bakry}. Combined with Kuwada's duality results, this means that Theorem~\ref{thm:contraction} could be stated as a general theorem where the condition $\ref{item_contraction_Contd}$ is replaced by a curvature condition $\Gamma_\Phi\geqslant \rho \Phi$ where $\Phi$ is related to the gradient of some distance (not necessarily Euclidean), in the spirit of \cite{Baudoin2} (where only the $\mathcal W_2$ distance is considered, which has the particularity that in this case the Gamma calculus behaves well even for non-diffusion processes, as in \cite{MoiPDMP}). Again, this is classical, and, contrary to such a general framework, one of the main points of  Theorem~\ref{thm:contraction} is that, by working only with Euclidean distances and constant diffusion matrices (which covers all the examples we are interested in here, as in Sections~\ref{sec:dampedHD} and \ref{Sec:LangevinGeneralisee}) we are able to prove the equivalence with   $\ref{item_contraction_Contd}$ which is simpler and more explicit than an abstract curvature condition and, moreover, does not involve the diffusion matrix. 
 
\item It is relatively easy to check the condition $\Contd$. Theorem~\ref{thm:contraction} can thus be seen as a tool to deduce some consequences from it. Nevertheless, it can also be quite easy to check the negation of  $\Contd$ for all $M$ (even only by considering the necessary condition that the real parts of all eigenvalues of $J_b(z)$ have to be negative for all $z\in\R^d$), which is not without interest. For instance, for the Langevin diffusion~\eqref{eq:Langevin_cinetique}, if we suppose that $U$ is uniformly convex outside of a compact set, then the equilibrium measure   $\mu$ with density proportional to  $\exp(-U(x)-|y|^2/2)$ satisfies a logarithmic Sobolev inequality. If moreover $\na^2 U$  is bounded, we can deduce that there exist $\rho>0$ and $C\geqslant 1$ such that
\[\forall \nu \in \mathcal P_1(\R^d),\forall t\geqslant 0\qquad \mathcal W_1\po \nu P_t,\mu\pf  \ \leqslant \ C e^{-\rho t} \mathcal W_1\po \nu  ,\mu\pf\,,\]
or even the same with $\mathcal W_2$, see for instance \cite[Theorem 2]{MonmarcheGuillin2020}. This is weaker than \eqref{eq:contractW2} in two ways: first, $C$ may be greater than $1$, second, one of the initial conditions has to be the equilibrium $\mu$. One could possibly wonder if it is possible to strengthen this long-time convergence into a   contraction \eqref{eq:contractW2} (with  $\rho>0$; otherwise, it is easy), but thanks to Theorem~\ref{thm:contraction} we can prove that this is impossible if $U$  
 is not uniformly convex on the whole space, see Section~\ref{Sec:suffit-pas}.

\item  In the same spirit, Theorem~\ref{thm:contraction},  in particular the equivalence between the contractions of $\mathcal W_p$ for all $p\in[1,+\infty]$, can be viewed in the light of contraction results for  $\mathcal W_1$ distances, such as \cite{Eberle} for elliptic diffusions or \cite{EberleGuillinZimmer} for the kinetic Langevin diffusion. These contractions are not obtained with parallel couplings but with reflection couplings (or a combination of both), which enables to take advantage of the Gaussian noise to bring two processes closer (while the parallel coupling cancels out this noise). They also rely on a concave modification of the usual distance.  For instance, for the overdamped diffusion \eqref{eq:overdamped},  a contraction of the form $\mathcal W_{1,\mathrm{dist}}(\nu P_t,\mu P_t) \leqslant e^{-\rho t} \mathcal W_{1,\mathrm{dist}}(\nu P_t,\mu P_t) $ holds for some $\rho>0$ with $\mathrm{dist}$ a distance which is equivalent to the Euclidean distance if $U$ is convex outside some compact set, see \cite{Eberle}. A similar result holds in the kinetic case \eqref{eq:Langevin_cinetique}, see \cite{EberleGuillinZimmer}. As a consequence, we get indeed that 
\[\forall \nu,\mu \in \mathcal P_1(\R^d),\forall t\geqslant 0,\qquad \mathcal W_1\po \nu P_t,\mu P_t\pf  \ \leqslant \ C e^{-\rho t} \mathcal W_1\po \nu  ,\mu\pf\,,\]
even for $\mu$ that is not the equilibrium. Nevertheless, according to Theorem~\ref{thm:contraction}, if $U$ is not convex, necessarily $C>1$.

\end{itemize}

\subsection{Concentration inequalities}\label{SubSec:concentration}

We give here some consequences of Theorem~\ref{thm:contraction}, which are well-known in the classical case of reversible diffusions (replacing the gradient by the  carré du champ), see \cite{BakryGentilLedoux} and references within. 

\begin{prop}\label{prop:logSob}
Under Assumption~\ref{assu}, for $M\in\mathcal M_{d,d}^{sym>0}(\R)$ and $\rho\in\R$, under the condition $\Contd$, for all measurable non-negative function $f$, $t\geqslant 0$ and $z\in\R^d$,
\[P_t\po f\ln f\pf(z) - P_t f(z) \ln P_t f(z) \ \leqslant \ C_t P_t\po  \| \na  \sqrt{f}\|_{M^{-1}}^2 \pf (z)\]
with $C_t := |\Sigma M^{1/2}|^2 (1-e^{-2\rho t})/\rho$ if $\rho\neq 0$ and $C_t := 2 t|\Sigma M^{1/2}|^2 $ if $\rho=0$.
\end{prop}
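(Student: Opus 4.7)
The plan is to apply the classical Bakry--Émery semigroup interpolation technique, adapted to our anisotropic gradient and combined with the gradient estimate \ref{item_contraction_gradP_t} of Theorem~\ref{thm:contraction}. I would first reduce to the case of $f \in \mathcal C^\infty(\R^d)$ satisfying $\varepsilon \leqslant f \leqslant 1/\varepsilon$ for some $\varepsilon > 0$, so that $\sqrt f$, $\ln f$ and all the quantities below are smooth and suitably bounded thanks to the regularity of $P_s$ stated after Assumption~\ref{assu}; the general case of a measurable non-negative $f$ then follows by approximation and monotone convergence. This regularisation step is essentially the only technical obstacle, and it follows a standard pattern.

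The core computation uses the interpolation
\[\Psi(s) \ = \ P_s\po g_s \ln g_s\pf (z),\qquad s\in[0,t],\ \ g_s := P_{t-s}f,\]
so that $\Psi(0) = P_t f(z) \ln P_tf(z)$ and $\Psi(t) = P_t(f\ln f)(z)$. Using $\partial_s g_s = -L g_s$ together with the diffusion chain rule $L(g\ln g) = (1+\ln g) Lg + \Gamma(g)/g$, where here $\Gamma(g) = |\Sigma \na g|^2/2$, one obtains
\[\Psi'(s) \ = \ P_s\po \frac{|\Sigma \na g_s|^2}{2 g_s}\pf(z).\]

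To bound this integrand I would combine three ingredients. First, the elementary matrix inequality $\na g_s \cdot \Sigma^2 \na g_s \leqslant |\Sigma M^{1/2}|^2 \|\na g_s\|_{M^{-1}}^2$, obtained via $y = M^{-1/2}\na g_s$ using the operator norm of $M^{1/2}\Sigma^2 M^{1/2}$. Second, squaring Theorem~\ref{thm:contraction}~\ref{item_contraction_gradP_t} applied to $g_s = P_{t-s}f$ gives $\|\na g_s\|_{M^{-1}}^2 \leqslant e^{-2\rho(t-s)}(P_{t-s}(\|\na f\|_{M^{-1}}))^2$. Third, the pointwise identity $\|\na f\|_{M^{-1}} = 2\sqrt f\, \|\na\sqrt f\|_{M^{-1}}$ and the Cauchy--Schwarz inequality with respect to the probability kernel $P_{t-s}(z,\cdot)$ give
\[\po P_{t-s}\po \|\na f\|_{M^{-1}}\pf\pf^2 \ \leqslant \ 4\, P_{t-s}(f) \, P_{t-s}\po \|\na \sqrt f\|_{M^{-1}}^2\pf \ = \ 4 g_s \, P_{t-s}\po \|\na \sqrt f\|_{M^{-1}}^2\pf.\]
The factor $g_s$ then cancels the denominator in $\Psi'(s)$; together with the semigroup property $P_s P_{t-s} = P_t$, this yields
\[\Psi'(s) \ \leqslant \ 2|\Sigma M^{1/2}|^2 e^{-2\rho(t-s)}\, P_t\po \|\na\sqrt f\|_{M^{-1}}^2\pf(z),\]
the key point being that the $z$-dependent factor on the right is independent of $s$.

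Integrating on $[0,t]$ using $\int_0^t e^{-2\rho(t-s)}\dd s = (1-e^{-2\rho t})/(2\rho)$ (or $t$ when $\rho=0$) then gives exactly $\Psi(t) - \Psi(0) \leqslant C_t P_t(\|\na \sqrt f\|_{M^{-1}}^2)(z)$, which is the claimed inequality. Beyond the aforementioned regularisation, the only subtlety worth noting is that these algebraic manipulations rely crucially on the commutation of $\na$ with the constant matrix $\Sigma M^{1/2}$, just as in the direct semigroup proof of $\ref{item_contraction_Contd}\Rightarrow\ref{item_contraction_gradP_t}$ presented in the proof of Theorem~\ref{thm:contraction}.
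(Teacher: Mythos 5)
Your proof is correct and follows essentially the same semigroup interpolation strategy as the paper's: same choice of $\Psi(s)=P_s(g_s\ln g_s)$, same use of the diffusion chain rule to get $\Psi'(s)=P_s(\Gamma(g_s)/g_s)$, same bound of $\Gamma$ by $\|\cdot\|_{M^{-1}}^2$, same invocation of the gradient estimate~\ref{item_contraction_gradP_t} of Theorem~\ref{thm:contraction}, and the same Cauchy--Schwarz step; the only cosmetic difference is that you substitute $\|\na\sqrt f\|_{M^{-1}}^2=\|\na f\|_{M^{-1}}^2/(4f)$ one step earlier and the $g_s$ factor cancels explicitly. One small point in the regularisation: smooth with $\varepsilon\leqslant f\leqslant 1/\varepsilon$ does not by itself make $f$ Lipschitz, which is needed to apply~\ref{item_contraction_gradP_t}, so you should additionally require $\na f$ bounded (the paper sidesteps this by taking $f=\varepsilon+h$ with $h$ smooth and compactly supported).
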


\begin{proof}
By density, it is sufficient to prove the result for functions of the form   $f=\varepsilon + h$ with $\varepsilon>0$  and $h\in\mathcal C^\infty(\R^d)$ non-negative with compact support. In particular, in this case, $P_tf \geqslant \varepsilon>0$ for all $t\geqslant 0$.  For such an  $f$, $t\geqslant 0$ and $z\in\mathbb R^d$, set
\[\varphi(s) \ = \ P_s \po P_{t-s}f \ln P_{t-s}f\pf\,,\qquad s\in[0,t]\,.\]
As in the proof of $\ref{item_contraction_Contd} \Rightarrow \ref{item_contraction_gradP_t}$ in Theorem~\ref{thm:contraction}, using the diffusion property of the generator    $L$ and writing $g=P_{t-s} f$,
\begin{align*}
\varphi'(s) \ &= \ P_s\po  L(g\ln g) - (1+\ln g) Lg \pf \ = \ P_s\po \frac{\Gamma(g)}{g}\pf
\end{align*}
with the  carr\'e du champ $\Gamma(f) = |\Sigma \na f|^2/2$, where we used that $a''(s)=1/s$ for $a(s)=s\ln(s)$. As a consequence,
\begin{align*}
\varphi(t)-\varphi(0) \  &= \ \int_0^t P_s\po \frac{\Gamma(g)}{g}\pf \dd s   \\
 &\leqslant \ \frac{|\Sigma M^{1/2}|^2}2\int_0^t P_s\po \frac{\|\na g\|_{M^{-1}}^2)}{g}\pf \dd s \\
&\leqslant \ \frac{|\Sigma M^{1/2}|^2}2\int_0^t e^{-2\rho (t-s)} P_s\po \frac{\po P_{t-s}(\|\na f\|_{M^{-1}})\pf^2}{g}\pf \dd s \,.
\end{align*}
Finally, the Cauchy-Schwarz inequality gives
\[\po P_{t-s}(\|\na f\|_{M^{-1}})\pf^2 \ \leqslant \ P_{t-s} (f) P_{t-s} \po \frac{\|\na f\|_{M^{-1}}^2}{f} \pf \,,\]
and we have indeed obtained
\[\varphi(t)-\varphi(0)  \ \leqslant \ \frac{|\Sigma M^{1/2}|^2}2\int_0^t e^{-2\rho (t-s)} \dd s  P_t\po \frac{ \|\na f\|_{M^{-1}}^2}{f}\pf\,.  \]
 
\end{proof}

We say that $\nu\in\mathcal P(\R^d)$ satisfies a logarithmic Sobolev inequality with  constant $C$ for  $M$ if for all  $f\in\mathcal C_{b,L}(\R^d)$ positive,
\[\int_{\R^d} f\ln f \dd \mu - \int_{\R^d} f \dd \mu \ln \po \int_{\R^d} f \dd \mu\pf  \ \leqslant \ C\int_{\R^d}\|\na \sqrt f\|_{M^{-1}}^2 \dd \mu \,. \]
Thus, Proposition~\ref{prop:logSob}  states a logarithmic Sobolev inequality for $\delta_z P_t$ with constant $C_t$, for all $z\in\R^d$ (which is called a \emph{local} logarithmic Sobolev inequality \cite{BakryGentilLedoux}).

\medskip

\noindent\textbf{Remark:} in fact, in the classical case, such an inequality is \emph{equivalent} to a curvature-dimension condition, and is also equivalent to a so-called \emph{inverse} local inequality   \cite[Theorem 5.5.2]{BakryGentilLedoux}. However, to get the equivalence between these notions, it is not sufficient that the carré du champ is bounded above by the square of the gradient, they need to be equal (or at least equivalent), which has no reason to hold in our case. In particular, an inverse logarithmic Sobolev inequality measures a regularisation property of the semigroup, yet our framework covers non-hypoelliptic diffusion (for instance $\dot x_t = - x_t$) for which such an inequality is clearly false.

\medskip

The local inequality stated in   Proposition~\ref{prop:logSob} together with the  gradient/semigroup sub-commutation \eqref{eq:gradient/semigroupe} imply that $\nu P_t$ satisfies a logarithmic Sobolev inequality $C_t+e^{-2\rho t} C$ as soon as $\nu$ satisfies such an inequality with constant $C$ (see e.g. \cite[Theorem 23]{MoiLangevinCinetique}). This has some nice consequences when $\rho>0$, in particular to get non-asymptotic confidence intervals  for ergodic means of the process:

\begin{prop}\label{prop:concentration}
Under Assumption~\ref{assu}, if $\Contd$ is satisfied for some $M\in\mathcal M_{d,d}^{sym>0}(\R)$ with $\rho>0$, then:
\begin{enumerate}
\item $(P_t)_{t\geqslant 0}$ has a  unique invariant probability measure   $\mu_\infty$, which has finite moments of all orders and satisfies a logarithmic Sobolev inequality for $M$ with constant   $ |\Sigma M^{1/2}|^2  /\rho$.
\item If $\nu \in\mathcal P(\R^d)$ satisfies a logarithmic Sobolev inequality for $M$ with  constant $C'$ and if $(Z_t)_{t\geqslant 0}$ is a solution of \eqref{eq:EDSgenerale} with initial distribution  $\nu$, then for all $t_0>0$, $T>0$, $u\geqslant 0$, $n\in\N^*$ and all measurable function $f$ on $\R^d$ such that $\|\na f\|_{M^{-1},\infty} \leqslant 1$,
\begin{align*}
\mathbb{P}\po \frac1n\sum_{k=1}^n  \po f(Z_{kt_0}) - \mathbb E \po f(Z_{kt_0})\pf\pf \geqslant u\pf\  & \leqslant \   \exp \po - \frac{n u^2(1-e^{-\rho t_0})^2}{C_{t_0}+e^{-\rho t_0}C'/n}\pf\\
\left| \frac1n\sum_{k=1}^n    \mathbb E \po f(Z_{kt_0})\pf  - \mu_\infty(f)\right| \ &\leqslant \ \frac{e^{-\rho t_0}}{n(1-e^{-\rho t_0})}\mathcal W_{M,1}\po \nu,\mu_\infty\pf\\
\mathbb{P}\po \frac1T\int_0^T  \po f(Z_{t}) - \mathbb E \po f(Z_{t})\pf\pf \dd t \geqslant u\pf\  & \leqslant \   \exp \po - \frac{T \rho^2 u^2}{2|\Sigma M^{1/2}|^2+C'/T}\pf\\
\left| \frac1T\int_0^T    \mathbb E \po f(Z_{t})\pf \dd t  - \mu_\infty(f)\right| \ &\leqslant \ \frac{1}{\rho T}\mathcal W_{M,1}\po \nu,\mu_\infty\pf\,.
\end{align*}

\end{enumerate}
 
\end{prop}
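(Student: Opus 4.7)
For part (1), I would build $\mu_\infty$ as the Cauchy limit in $(\mathcal P(\R^d),\mathcal W_{M,1})$ of $\delta_z P_t$: Theorem~\ref{thm:contraction}~\ref{item_contraction_W2} with $\rho>0$ gives, for $s\leqslant t$,
\[\mathcal W_{M,1}(\delta_z P_s,\delta_z P_t)\ \leqslant \ e^{-\rho s}\mathcal W_{M,1}(\delta_z,\delta_z P_{t-s}),\]
and the right-hand side is uniformly bounded in $t-s$ by Grönwall on the Lipschitz drift (Assumption~\ref{assu}), giving in fact $\sup_{t\geqslant 0}\mathbb E|Z_t|^p<\infty$ for all $p$ (using the contraction to stabilise the first moment and Itô plus Burkholder to propagate to higher moments). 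The limit is invariant by continuity of $P_t$ and is unique again by the contraction; finiteness of moments of all orders of $\mu_\infty$ follows by Fatou. For the LSI of $\mu_\infty$, I apply the stability statement recalled after Proposition~\ref{prop:logSob} with $\nu=\delta_z$ (which trivially satisfies LSI with constant $0$): $\delta_z P_t$ satisfies LSI for $M$ with constant $C_t\to|\Sigma M^{1/2}|^2/\rho$, and lower semicontinuity of entropy together with the uniform Gaussian tails supplied by this LSI passes the inequality through the $\mathcal W_{M,1}$-limit to $\mu_\infty$.

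For part (2), the two bias bounds are an immediate consequence of Kantorovich--Rubinstein duality and Theorem~\ref{thm:contraction}~\ref{item_contraction_W2}: since $\|\na f\|_{M^{-1},\infty}\leqslant 1$ means $f$ is $1$-Lipschitz for $\mathrm{dist}_M$ and $\mu_\infty P_{kt_0}=\mu_\infty$,
\[|\mathbb E f(Z_{kt_0})-\mu_\infty(f)|\ \leqslant \ \mathcal W_{M,1}(\nu P_{kt_0},\mu_\infty)\ \leqslant \ e^{-\rho kt_0}\mathcal W_{M,1}(\nu,\mu_\infty).\]
Averaging and using $\sum_{k=1}^n e^{-\rho kt_0}\leqslant e^{-\rho t_0}/(1-e^{-\rho t_0})$ gives the discrete bias bound; the continuous analogue follows from $\int_0^T e^{-\rho t}\dd t\leqslant 1/\rho$.

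The concentration bounds follow from Herbst's argument fed with the right LSIs. For the discrete inequality, I work with the joint law of $(Z_{t_0},\dots,Z_{nt_0})$ on $(\R^d)^n$: Proposition~\ref{prop:logSob} applied to each one-step transition $\delta_z P_{t_0}$, and the remark just after it used to propagate the initial LSI of $\nu$ through one step, give by Markov-chain tensorization an LSI on the product space with constant $C_{t_0}+e^{-\rho t_0}C'/n$ (the $1/n$ reflecting that the initial condition is averaged into the ergodic mean). The observable $F(z_1,\ldots,z_n)=\tfrac1n\sum_{k=1}^n f(z_k)$ has squared Lipschitz seminorm for the joint metric bounded by $\tfrac1{n(1-e^{-\rho t_0})^2}$, since perturbing $z_k$ propagates to $z_j$ with factor $e^{-\rho(j-k)t_0}$ by the parallel-coupling reading of \eqref{eq:gradient/semigroupe}, so that $\sum_{j\geqslant k}e^{-\rho(j-k)t_0}\leqslant (1-e^{-\rho t_0})^{-1}$. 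Herbst's Gaussian concentration combining these two quantities gives the stated exponential bound. The continuous-time concentration is obtained by the analogous entropy/semigroup interpolation: letting $\psi(t)=\mathbb E\exp\bigl(\lambda\int_0^t(f(Z_s)-\mathbb E f(Z_s))\dd s\bigr)$, I differentiate and use Itô together with Proposition~\ref{prop:logSob} (here $\nu P_s$ has LSI constant $C_s+e^{-2\rho s}C'$) to obtain a differential inequality in $t$ that, after integration and optimisation in $\lambda$, produces the denominator $2|\Sigma M^{1/2}|^2+C'/T$.

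\textbf{Main obstacle.} The delicate step is setting up the joint LSI on the Markov chain (and its continuous-time avatar) with the precise constant $C_{t_0}+e^{-\rho t_0}C'/n$. The reversible Bakry--Émery toolbox uses a carré-du-champ commutation that is not available here: we only have the gradient/semigroup contraction \eqref{eq:gradient/semigroupe}, and the carré du champ $\Gamma(f)=|\Sigma\na f|^2/2$ is merely dominated by $|\Sigma M^{1/2}|^2\|\na f\|_{M^{-1}}^2$ rather than equal to it. Threading this asymmetry through the tensorization so that the Lipschitz economy $(1-e^{-\rho t_0})^{-2}/n$ and the joint LSI constant line up cleanly is the main technical work beyond the classical template.
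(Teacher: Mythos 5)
Your part~(1) is essentially the paper's argument, phrased slightly differently: the paper invokes completeness of $(\mathcal P_r(\R^d),\mathcal W_{M,r})$ and the Banach fixed-point theorem, whereas you verify the Cauchy property of $\delta_z P_t$ directly; these are the same thing. One small wrinkle: ``Grönwall on the Lipschitz drift'' alone does not give $\sup_t\mathbb E|Z_t|<\infty$ (it gives exponential growth in $t$) --- the uniform bound genuinely relies on $\Contd$ with $\rho>0$, as your parenthetical correctly notes, so you should lead with the dissipativity rather than Lipschitzness. For the LSI of $\mu_\infty$, the paper simply lets $t\to\infty$ in Proposition~\ref{prop:logSob} using the established weak convergence $\delta_z P_t\to\mu_\infty$; your route through the stability remark (with $C=0$) and lower semicontinuity of entropy lands in the same place with a bit more scaffolding.

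For part~(2), your two bias bounds are correct and are exactly the telescoping the paper has in mind. The concentration bounds are where you diverge from the paper: the paper does not re-derive them, it applies a black-box result, \cite[Theorem 23]{MoiLangevinCinetique}, to the discrete skeleton $P_{t_0}$ (after the change of variable $z\leftarrow M^{1/2}z$), and then obtains the continuous-time bound by setting $t_0=T/n$ and letting $n\to\infty$, using almost-sure uniform continuity of $t\mapsto f(Z_t)$ on $[0,T]$ to pass the Riemann sums to the integral. Your approach --- Herbst plus a Markov-chain tensorization of the local LSI, threaded through the Lipschitz propagation $e^{-\rho(j-k)t_0}$ --- is the content of that cited theorem, and your accounting of the exponent (the $(1-e^{-\rho t_0})^{-2}/n$ Lipschitz economy and the $C_{t_0}+e^{-\rho t_0}C'/n$ joint constant) is structurally right. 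But as you yourself flag in your ``main obstacle'' paragraph, you have not actually carried out the tensorization with those precise constants, and the one technical difficulty you raise (the carré du champ being merely dominated by $|\Sigma M^{1/2}|^2\|\nabla\cdot\|_{M^{-1}}^2$ rather than equal to it) is exactly what the cited theorem handles. So for the discrete concentration inequality your proposal is a plausible plan, not a proof. For the continuous-time concentration inequality you propose a genuinely different route (a direct differential inequality on $\psi(t)=\mathbb E\exp(\lambda\int_0^t(f(Z_s)-\mathbb E f(Z_s))\dd s)$ via Itô and the time-$s$ local LSI); this is more work than the paper's limit of the discrete bound and is not detailed. You can check that the paper's limit does produce the stated constants: with $t_0=T/n$, $n(1-e^{-\rho t_0})^2\to \rho^2 T^2/n\cdot n=\rho^2 T^2/T\cdot T/n\cdot n$, and $n C_{t_0}\to 2|\Sigma M^{1/2}|^2 T$ while $e^{-\rho t_0}C'\to C'$, giving exactly the continuous-time exponent; this is much lighter than redoing Herbst in continuous time.
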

 
\begin{proof}
For $r\geqslant 1$, denote by $\mathcal P_r(\R^d)$  the set of probability measures on  $\R^d$ with finite moment of order $r$.  Fix $t>0$ and $r\geqslant 2$. Theorem~\ref{thm:contraction} implies  that $P_t$ is a contraction of $(\mathcal P_r(\R^d),\mathcal W_{M,r})$, which is a complete metric space by  \cite[Theorem~6.18]{Villani}, and thus there exists a unique $\mu_\infty \in\mathcal P_r(\R^d)$ invariant for $P_t$. For $s\geqslant 0$, $(\mu_\infty P_s)P_t = (\mu_\infty P_t) P_s = \mu_\infty P_s$, so that $\mu_\infty P_s$ is invariant for $P_t$. By uniqueness $\mu_\infty P_s = \mu_\infty$, in other words $\mu_\infty $ is  invariant for $P_s$ for all $s\geqslant 0$. Applying \eqref{eq:contractW2} with $\nu = \delta_z$   and $\mu = \mu_\infty$ yields the convergence of $\delta_z P_t$ toward $\mu_\infty$ as $t\rightarrow +\infty$ in the sense of the $\mathcal W_1$ distance, which implies the weak convergence. The logarithmic Sobolev inequality for  $\mu_\infty$ is then obtained by letting   $t\rightarrow +\infty$ in Proposition~\ref{prop:logSob}.

We now prove the second point of the   proposition. The discrete-time case of $(Z_{kt_0})_{k\in\N}$ is obtained by applying \cite[Theorem 23]{MoiLangevinCinetique} to the operator $P_{t_0}$ thanks to Theorem~\ref{thm:contraction} and Proposition~\ref{prop:logSob}, after the change of variable $z\leftarrow M^{1/2} z$.  The continuous-time case is then obtained by applying the previous result with  $t_0=T/n$ and letting $n\rightarrow +\infty$. Indeed, on any interval $[0,T]$, $t\mapsto f(Z_t)$ is almost surely uniformly continuous. In other words, for all $\varepsilon>0$, there exists a random variable  $\delta_{\varepsilon}$ which is almost surely positive and such that, almost surely,  
\[\forall t,s\in[0,T],\ |s-t|\leqslant \delta_\varepsilon\ \Rightarrow \ |f(Z_t)-f(Z_s)|\leqslant \varepsilon\,.\]
Since $\mathbb P\po \delta_\varepsilon \leqslant 1/m\pf \rightarrow 0$ as $m\rightarrow +\infty$, we deduce the convergence in probability of   $1/n\sum_{k=1}f(Z_{kT/n})$ toward $1/T\int_0^T f(Z_s)\dd s$.

\end{proof}

We finish this section by an example of application of Proposition~\ref{prop:concentration}. In \cite{Menegaki}, Menegaki proves a quantitative Wasserstein contraction for a  close to harmonic chain of interacting oscillators (different to those considered in Section~\ref{Sec:chains} since there is only friction at the ends of the chain). She already noticed that this result implies a log-Sobolev inequality for the invariant measure. Thanks to  Proposition~\ref{prop:concentration}, we also get concentration inequalities for the model studied by Menegaki.

%
%
%
%

 \subsection{Discussion on a jump process}\label{Subsec:PDMP}

Let us show that Theorem~\ref{thm:contraction} does not hold in general for non-diffusive processes, and let us exemplify the fact that an almost sure contraction property is a very restrictive condition.

Consider the Markov process on  $\R^d$  with generator $\mathcal L$ given by
\[\mathcal L\varphi(z) \ = \ -a z\cdot \na \varphi(z) + \lambda \po \varphi(hz) - \varphi(z)\pf\,,\]
 with constants $a,h\in\R$ and $\lambda>0$. For all  $M\in \mathcal M_{d,d}^{sym>0}(\R)$, the deterministic drift $b(x)=-ax$ satisfies $\mathrm{Cont}_d(M,a)$ and does not satisfy $\mathrm{Cont}_d(M,\rho)$ for any $\rho>a$.
 
The process corresponding to this generator solves 
 \begin{equation}\label{eq:PDMP}
\dd Z_t \ = \ - a Z_t + (h-1)Z_t \dd N_{\lambda t} 
 \end{equation} 
 where $(N_t)_{t\geqslant 0}$ is a standard Poisson process. In other words, the process follows the deterministic flow $\dot z= -az$ and, at rate $\lambda$, is multiplied by $h$. Besides, notice that the process $(QZ_t)_{t\geqslant 0}$ is also a Markov process with generator $\mathcal L$ for all matrices $Q$, so that in the following we will only consider the case $M=I_d$, keeping in mind that all the results stated with the standard Euclidean norm in fact also hold with all norms  $\|\cdot\|_M$ for $M\in\mathcal M_{d,d}^{sym>0}(\R)$.
 
Let $z_1,z_2\in\R^d$ with $z_1\neq z_2$, $(N_t)_{t\geqslant 0}$  a standard Poisson process and $(Z_{1,t},Z_{2,t})_{t\geqslant 0}$ the solutions  of \eqref{eq:PDMP} (driven by $N$) with initial conditions $z_1$ and $z_2$ (namely, $Z_{1}$ and $Z_{2}$ are coupled in order to have the same jump times). This is the analog of the parallel coupling for diffusions (sometimes called synchronous coupling). Notice that, if $|h|>1$, then there exists no  $\rho\in\R$ such that, for a given $t>0$, almost surely, $|Z_{1,t}-Z_{2,t}|\leqslant e^{-\rho t}|z_1-z_2|$, since the number of jumps occurring during the  interval $[0,t]$ can be arbitrarily high with positive probability. Conversely, if $|h|\leqslant 1$, then a jump never increases the distance. In that case, for  $\rho = a$:
 \begin{equation}\label{eq:PDMPcontract}
 \text{almost surely,}\qquad \forall t\geqslant 0\,, \qquad |Z_{1,t}-Z_{2,t}| \leqslant e^{-\rho t}|z_1-z_2|\,.
 \end{equation}
 On the other hand this property is false for all   $\rho>a$ (even when replacing ``$\forall t\geqslant0$" by a fixed value of $t$, or $|\cdot|$ by any other  $\|\cdot\|_M$), since for all interval $[0,t]$ the probability that no jump has occured yet is non-zero, in which case the previous inequality is an equality  (with $\rho=a$).
 
From this, we get that, if $|h|\leqslant 1$, whatever $M$, the points $\ref{item_contraction_Winfini}, \ref{item_contraction_W2}, \ref{item_contraction_gradP_t2}$ and  $\ref{item_contraction_gradP_t2}$ of Theorem~\ref{thm:contraction} are all satisfied with $\rho = a$ (the proof of these facts from \eqref{eq:PDMPcontract} does not rely on the diffusion property of the process).

Let us prove that, for all  $h\in\R$,   the point $\ref{item_contraction_Winfini}$ can never hold with a $\rho>a$. Take $\mu=\delta_{z_1}$ and $\nu=\delta_{z_2}$for some $z_1\neq z_2$. Fix also $t\leqslant \ln(4/3)/\lambda$ and consider  $(Z_{1,t},Z_{2,t})$ any coupling of $\mu P_t$ and $\nu P_t$. The probability that no jump has occurred before time $t$ being larger than $3/4$ for each process, the probability of $\{Z_{i,t}=e^{-at}z_i,\ i=1,2\}$, hence of $\{|Z_{1,t}-Z_{2,t}|=e^{-at}|z_1-z_2|\}$, is non-zero. As a  consequence, for all $t\leqslant \ln(3/4)/\lambda$ and $z_1,z_2\in\R^d$, $\mathcal W_\infty(\delta_{z_1}P_t,\delta_{z_2} P_t) \geqslant e^{-at}|z_1-z_2|$, which concludes.
 
 \medskip
 
 We see that the contribution of $h$ to the contraction is absolutely not taken into account by the almost sure contraction property. Yet, when  $|h|<1$, each jump reduces the distances, which means the contraction is on average larger than $a$. Besides, in the case $|h|>1$, there is also a contraction on average provided $a$ is large enough. This balance between the contraction (or dilatation) properties of a deterministic drift and a jump mechanism have been studied in  \cite{MoiPDMP}  in a more general framework. In the present case, the situation is very simple since, almost surely, the synchronous coupling yields  
 \[Z_{1,t} - Z_{2,t} \ = \ e^{-at} h^{N_{\lambda t}}(z_1-z_2)\,.\]
In particular, for all $n\geqslant 1$,
 \[\mathbb E \po |Z_{1,t} - Z_{2,t} |^n \pf \ = \  e^{-\po an + (1-|h|^n)\lambda\pf t}|z_1-z_2|^n  \,.\]
 Hence, we obtain that, for all $n\geqslant 1$, $t\geqslant 0$ and $\nu,\mu\in\mathcal P(\R^d)$,
 \[\mathcal W_n\po \nu  P_t,\mu P_t\pf \ \leqslant \ e^{-\rho t}\mathcal W_n\po \nu   ,\mu \pf \]
holds with $\rho = \rho_n :=  a  +  (1-|h|^n)\lambda/n  $. Note that, contrary to the case $n=+\infty$, here we haven't proved that the inequality is false for $\rho>\rho_n$.  It is already interesting to notice that, in the expression of $\rho_n$, the respective weight of the deterministic drift and of the jumps depends on $n$. In particular, for $n=1$, we get that the point $\ref{item_contraction_W2}$ of Theorem~\ref{thm:contraction} (and thus the point $\ref{item_contraction_gradP_t2}$ as the implication does not rely on the diffusion property of the process) is satisfied with $\rho =  a  +  (1-|h|)\lambda$, whatever the value of $h\in\R$. For $|h|<1$, we see that the equivalence between points $\ref{item_contraction_Winfini}$ and $\ref{item_contraction_W2}$ is false.

\section{Variations on damped Hamiltonian dynamics}\label{sec:dampedHD}

We focus in this section on diffusions \eqref{eq:EDSgenerale} on $\R^d$ with $d=2n$, $n \in \N$ with a drift of the form (decomposing $z=(x,y)\in \R^d=\R^n\times\R^n$)
 \begin{equation}\label{eq:bsuffitpas}
b(x,y) \ = \ \begin{pmatrix}
 y \\ -\na U(x) - \gamma y
 \end{pmatrix}\,, 
 \end{equation}
 for some $U\in\mathcal C^\infty(\R^n)$, $\gamma>0$. This is for instance the case of the Langevin process \eqref{eq:Langevin_cinetique}, corresponding to the diffusion matrix
 \[\Sigma = \sqrt{2\gamma}\begin{pmatrix}
 0 & 0 \\ 0 & I_n
 \end{pmatrix}\,.\]
 Another example is given by chains of oscillators. Take $n=pN$ where $p\in \N$ is the ambient space dimension and $N\in\N$ is the number of oscillators, so that we decompose $(x,y)=(x_1,\dots,x_N,y_1,\dots,y_N)$ with $(x_i,y_i)\in \R^p \times\R^p$ the position and velocity of the $i^{th}$ particle. Let $V \in \mathcal C^\infty(\R^p)$ be a so-called pinning potential (possibly $V=0$), an even $F\in \mathcal C^\infty (\R^p)$ be a so-called interaction potential, and $T_1,\dots,T_N\geqslant 0$ be some temperatures.  The corresponding chain of oscillators is then the diffusion process $(X_{1,t},\dots,X_{N,t},Y_{1,t},\dots,Y_{N,t})_{t\geqslant 0}$ solution of
 \begin{equation}\label{eq:chains}
\forall i\in\cco 1,N\ccf\,,\qquad \left\{\begin{array}{rcl}
\dd X_{i,t} & = &  Y_{i,t} \dd t \\
\dd Y_{i,t} & = & -   \na_{x_i} U(X_{t})  \dd t     - \gamma Y_{i,t} \dd t + \sqrt{2\gamma T_i} \dd W_{i,t}
\end{array}
\right.
\end{equation}
where $W=(W_1,\dots,W_N)$ is a standard Brownian motion on $\R^{pN}$ and
\begin{equation}\label{eq:pot_chains}
U(x) = \sum_{i=1}^N V(x_i) +   \sum_{i=1}^{N-1} F(x_i-x_{i+1})\,.
\end{equation}
Typically, $T_i=0$ for all $i\in\cco 2,N-1\ccf$. When the $T_i$'s are not all equal, the invariant measure of the process is not explicit.  Remark that, in \eqref{eq:chains} and in contrast to other studies like \cite{Menegaki,Eckmann,ReyBellet,HairerHeatBath} and references within, the friction term $ - \gamma Y_{i,t} \dd t$ concern all the particles, not only those at the ends of the chain ($i=1$ and $N$). The system is said to be pinned if $V\neq 0$ and unpinned otherwise.

More generally, we can consider a total energy of the form
\begin{equation}\label{eq:pot_graph}
U(x) = \sum_{i=1}^N V_i(x_i) + \sum_{i\sim j} F_{i,j}(x_i-x_j)
\end{equation}
with an equivalence relation $i\sim j$ given by a non-oriented graph on $\cco 1,N\ccf$ and some potentials $V_i,F_{i,j}$, as in \cite[Section 4]{MoiGamma}. For clarity we won't discuss this  straightforward generalization in detail.

 \subsection{Convexity of $U$ is not sufficient}\label{Sec:suffit-pas}
 
As discussed in the introduction, a natural question is whether the convexity of $U$ is sufficient to have a contraction for the drift  \eqref{eq:bsuffitpas}.

  
  Consider first the Gaussian case, namely $U(x) = x\cdot Sx$ with $S\in\mathcal M_{n,n}^{sym>0}(\R)$. Up to an orthonormal change of coordinates, we may assume that $S$ is a diagonal matrix, in which case the $n$ couples of coordinates $(X_i,Y_i)_{i\in\cco 1,n\ccf}$ are independent diffusions on $\R\times\R$ (in the Langevin case \eqref{eq:Langevin_cinetique}). Hence, we only need to consider the case $n=1$. In that case,
  \[ J_b(x,y) \ = \ \begin{pmatrix}
0 & 1 \\ -\lambda  & -\gamma
\end{pmatrix}\]
for some given $\lambda>0$, for all $x,y\in\R^n$. The eigenvalues of this matrix are $-\gamma/2\pm\sqrt{\gamma^2/4-\lambda}$ (possibly complex), in any cases their real part is negative. According to \cite[Lemma 2.11]{Arnold}, for all $\varepsilon>0$ there exists $M\in \mathcal M_{2,2}^{sym>0}(\R)$ such that 
\[MJ_b(x,y) \  \leqslant \ -\po \mathfrak{Re}\po \frac{\gamma}2 - \sqrt{\frac{\gamma^2}{4}-\lambda} \pf - \varepsilon\pf M\,, \]
the condition $\varepsilon>0$ being necessary only in the defective case, here when $\gamma^2= 4\lambda$ (in that case there is no contraction at rate $\gamma/2$, a polynomial factor has to be added, see \cite{MoiGamma}). The condition $\Contd$ is thus indeed satisfied with some $\rho>0$ as soon as $\lambda>0$,  and by combining the corresponding matrices $M$ we see that such a condition is always satisfied in dimension $n$ regardless of the eigenvalues of $S$   (and of the relations between them).

In the general (possibly non-Gaussian) case, however, the situation is slightly more complicated, in particular it is not enough to study the eigenvalues of $J_b(x,y)$ for $(x,y)\in\R^{2n}$. The main goal of this section is to prove the following.

\begin{prop}\label{prop:NSconditionLangevin}
Let $U\in \mathcal C^\infty(\R^n)$ and $\gamma>0$.
\begin{enumerate}
\item \nvv{If $b$ given by \eqref{eq:bsuffitpas} satisfies  $\Contd$ for some $\rho>0$ and $M\in\mathcal M_{2n,2n}^{sym>0}(\R)$ then necessarily there exists $\lambda >0$ such that $\na^2 U(x) \geqslant \lambda I_n$ for all $x\in\R^n$. }
\item If  there exist $x,x'\in \R^n$ and $\lambda,\Lambda>0$ such that \nvv{$\sqrt{\Lambda}-\sqrt{\lambda}   \geqslant   \gamma $} and
\begin{equation}\label{eq:Usuffitpas}
\na^2U(x)\ =\ \lambda I_n\,,\qquad   \na^2 U(x') \ = \  \Lambda I_n\,,
\end{equation}
then, for all $\rho>0$ and $M\in\mathcal M_{2n,2n}^{sym>0}(\R)$, $b$ given by \eqref{eq:bsuffitpas} does not satisfy $\Contd$. 
\item If there exist $\lambda,\Lambda>0$ such that \nvv{$\sqrt{\Lambda}-\sqrt{\lambda}  <  \gamma$} and
\begin{equation}\label{eq:condUnabla2}
\forall x\in\R^n\,,\qquad \lambda I_n \leqslant \na^2 U(x) \leqslant \Lambda I_n\,,
\end{equation}
then $b$ given by \eqref{eq:bsuffitpas} satisfies $\Contd$ for some $\rho>0$ and $M\in\mathcal M_{2n,2n}^{sym>0}(\R)$.
\item More precisely, if \eqref{eq:condUnabla2} holds for some $\lambda,\Lambda>0$ with  $\Lambda\leqslant \gamma^2/4$, then $b$ given by \eqref{eq:bsuffitpas} satisfies $\Contd$ with $\rho = \lambda/(3\gamma)$ and
\[M = \begin{pmatrix}
I_n & \gamma^{-1} I_n \\
\gamma^{-1} I_n  & \Lambda^{-1} I_n 
\end{pmatrix}\,,\]
which satisfies $\min(1,\Lambda^{-1})/2 I_{2n} \leqslant M \leqslant 3/2 \max(1,\Lambda^{-1}) I_{2n}$.
\end{enumerate}
\end{prop}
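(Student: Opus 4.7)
The whole argument pivots on a reduction of the $2n$-dimensional condition $\Contd$ for the drift~\eqref{eq:bsuffitpas} to the two-dimensional one, in the spirit of the Gaussian discussion at the beginning of this section. Decompose a candidate $M\in\mathcal M_{2n,2n}^{sym>0}(\R)$ into four $n\times n$ blocks $M_{ij}$ ($i,j\in\{1,2\}$) and, for any unit vector $e\in\R^n$, set $\tilde M_e = (e\cdot M_{ij}e)_{i,j}\in\mathcal M_{2,2}^{sym>0}(\R)$. Let $\tilde J^{(\mu)}$ denote the $2\times 2$ matrix with rows $(0,1)$ and $(-\mu,-\gamma)$. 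A direct expansion shows that for $(u,v)=(\alpha e,\beta e)$,
\begin{align*}
(u,v)\cdot MJ_b(x,y)(u,v) &= (\alpha,\beta)\cdot \tilde M_e \tilde J^{(\mu)}(\alpha,\beta)^T, \\
(u,v)\cdot M(u,v) &= (\alpha,\beta)\cdot \tilde M_e (\alpha,\beta)^T,
\end{align*}
with $\mu=e\cdot \nabla^2 U(x)e$. Hence $\Contd$ implies the $2\times 2$ condition $\tilde M_e \tilde J^{(\mu)}+(\tilde J^{(\mu)})^T\tilde M_e\leqslant -2\rho\tilde M_e$ for every unit $e$ and every eigenvalue $\mu$ of $\nabla^2 U(x)$, $x\in\R^n$. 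Conversely, when $M$ has scalar-block form $M_{ij}=m_{ij}I_n$, simultaneous orthogonal diagonalisation of $\nabla^2 U(x)$ shows $\Contd$ is equivalent to the $2\times 2$ condition at $\tilde M=(m_{ij})_{i,j}$ for all $\mu\in\bigcup_{x\in\R^n}\mathrm{spec}(\nabla^2 U(x))$.

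With this reduction, everything hinges on the following $2\times 2$ fact: given $\lambda,\Lambda>0$, the condition $\tilde M \tilde J^{(\mu)}+(\tilde J^{(\mu)})^T\tilde M\leqslant -2\rho \tilde M$ admits a solution with $\tilde M>0$ and $\rho>0$ simultaneously at $\mu=\lambda$ and $\mu=\Lambda$ (equivalently, for all $\mu\in[\lambda,\Lambda]$ by affine dependence in $\mu$) if and only if $\sqrt\Lambda-\sqrt\lambda<\gamma$, i.e.\ $\Lambda-\lambda<\gamma(\sqrt\Lambda+\sqrt\lambda)$. I would prove this by writing $\tilde M$ as the symmetric $2\times 2$ matrix with entries $a,b,b,c$ and imposing the non-positivity of the diagonal and the non-negativity of the determinant of $\tilde M \tilde J^{(\mu)}+(\tilde J^{(\mu)})^T\tilde M+2\rho\tilde M$. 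Substituting $d=a-b\gamma$ to decouple the equations, the two determinantal conditions at $\rho=0$ read $4b\mu(c\gamma-b)\geqslant (d-c\mu)^2$ for $\mu=\lambda,\Lambda$. The optimal $d=c\sqrt{\lambda\Lambda}$ makes both right-hand sides equal to $c^2(\sqrt\Lambda-\sqrt\lambda)^2$, and the optimal $b=c\gamma/2$ maximises the left-hand side to $c^2\gamma^2$, identifying the sharp threshold. A standard continuity argument upgrades strict inequality to feasibility with some $\rho>0$.

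Given these ingredients, Part~1 follows from the necessary condition on eigenvalues of $J_b(z)$ observed in Section~\ref{Subsec:resultat_courbure}: the eigenvalues of $\tilde J^{(\mu)}$ are $-\gamma/2\pm\sqrt{\gamma^2/4-\mu}$, so a non-positive eigenvalue of $\nabla^2 U$ at some point, or a sequence of positive eigenvalues tending to $0$, produces eigenvalues of $J_b(z)$ with real part $\geqslant 0$ or arbitrarily close to $0$, which precludes any $\rho>0$. Part~2 uses the hypothesis $\nabla^2 U(x)=\lambda I_n$, $\nabla^2 U(x')=\Lambda I_n$, which ensures $e\cdot \nabla^2 U(x)e=\lambda$ and $e\cdot \nabla^2 U(x')e=\Lambda$ for \emph{every} unit $e$; the $2\times 2$ reduction at the same arbitrary $e$ then forces the $2\times 2$ condition simultaneously at $\mu=\lambda$ and $\mu=\Lambda$, contradicting the $2\times 2$ fact under $\sqrt\Lambda-\sqrt\lambda\geqslant\gamma$. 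Part~3 is the sufficiency direction of the same $2\times 2$ fact combined with the scalar-block lift $M_{ij}=\tilde M_{ij}I_n$. Part~4 is explicit verification: the block computation of $MJ_b+J_b^T M+2\rho M$ for the announced $M$ yields a symmetric $2\times 2$ block matrix whose entries are scalar multiples of $H=\nabla^2 U(x)$ and $I_n$; after orthogonal diagonalisation of $H$, the negative semi-definiteness for each eigenvalue $\mu\in[\lambda,\Lambda]$ reduces to checking that the $2\times 2$ diagonal is non-positive (immediate from $\rho=\lambda/(3\gamma)$ and $\Lambda\leqslant\gamma^2/4$) and that the determinant is non-negative, which is routine algebra using only $\lambda\leqslant\mu\leqslant\Lambda\leqslant\gamma^2/4$. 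The eigenvalue bounds on $M$ follow from computing the spectrum of the $2\times 2$ generating matrix with entries $1,\gamma^{-1},\gamma^{-1},\Lambda^{-1}$.

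The main technical hurdle is the $2\times 2$ analysis, specifically the identification of the sharp threshold $\sqrt\Lambda-\sqrt\lambda=\gamma$ through the double optimisation in $(b,d)$. Once this is settled, the other three parts slot in almost mechanically; the only other slightly delicate point is the explicit determinant verification in Part~4 uniformly in $\mu\in[\lambda,\Lambda]$, together with the elementary eigenvalue bounds for the proposed $M$.
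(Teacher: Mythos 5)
Your proposal is correct and takes essentially the same route as the paper: reduce the $2n$-dimensional condition to a $2\times 2$ one by restricting to vectors of the form $(\alpha e, \beta e)$, prove the sharp $2\times 2$ threshold lemma, then deploy it at isotropic points $(x,0)$, $(x',0)$ for the necessity in Part~2 and via a scalar-block ansatz $M_{ij}=\tilde M_{ij}I_n$ for the sufficiency in Parts~3--4; your direct optimisation over $(b,d)$ in the $2\times2$ lemma is just a reparametrisation of the paper's change of variables $\alpha=\gamma a/c - 1$, $\beta=1/c$ (and your $b=c\gamma/2$ corresponds exactly to the paper's $\alpha=1$).

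One caveat worth flagging so it does not mislead a future reader: the identity $(u,v)\cdot MJ_b(x,y)(u,v)=(\alpha,\beta)\cdot\tilde M_e\tilde J^{(\mu)}(\alpha,\beta)^T$ with $\mu=e\cdot\nabla^2 U(x)e$ does \emph{not} hold for an arbitrary unit vector $e$ and arbitrary $M$, since the cross terms $e\cdot M_{12}\nabla^2U(x)e$ and $e\cdot M_{22}\nabla^2U(x)e$ do not factor as $\mu\,(e\cdot M_{12}e)$ and $\mu\,(e\cdot M_{22}e)$ unless $e$ is an eigenvector of $\nabla^2U(x)$ or $M_{12}, M_{22}$ are scalar multiples of $I_n$. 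Your applications are safe (Part~2 uses only isotropic points, Part~3 only scalar blocks), but the blanket claim that $\Contd$ implies the $2\times2$ condition ``for every unit $e$ and every eigenvalue $\mu$'' should be restricted to eigenpairs. Similarly, in the $2\times2$ optimisation the phrase ``both right-hand sides equal to $c^2(\sqrt\Lambda-\sqrt\lambda)^2$'' drops a factor of $\mu$ on each side; the factors cancel and the threshold $\gamma>\sqrt\Lambda-\sqrt\lambda$ is unaffected, but the intermediate statement as written is inaccurate.
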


\noindent\textbf{Remarks:} 

\begin{itemize}
\item \nvv{In dimension $n=1$, the second and third points are  \cite[Proposition 3.6]{Arnold}. The condition (3.5) of \cite{Arnold} corresponds to $\Contg$ for the drift \eqref{eq:bsuffitpas}.}
\item To sum up, the condition \nvv{$\sqrt{\Lambda}-\sqrt{\lambda}   \geqslant   \gamma $} is sharp, in the sense that, if it is satisfied, then all potentials $U$ satisfying \eqref{eq:condUnabla2} gives a contraction for the corresponding process while, if it is not satisfied, there exists potentials $U$ that satisfy \eqref{eq:condUnabla2} and for which there is no contraction.
\item In other words, for fixed values of $\lambda$ and $\Lambda$, a contraction (for a Euclidean distance) cannot hold if the friction $\gamma$ is too small, which is consistent with the results of \cite{BolleyGuillinMalrieu,Chatterji1,Dalalyan}. For a given $\Lambda$ the constraint on $\gamma$ gets weaker as $\lambda$ get close to $\Lambda$, which is consistent with the Gaussian case where $\lambda = \Lambda$ and there is no constraint on  $\gamma$. For a fixed $\lambda$, as $\gamma\rightarrow +\infty$,  a contraction holds for larger and larger values of $\Lambda$, which is consistent with the overdamped limit of the process: indeed for a fixed $U$ it is well-known that  $(X_{\gamma t})_{t\geqslant 0}$ converges as $\gamma\rightarrow +\infty$ to the overdamped process \eqref{eq:overdamped}, for which a contraction holds as soon as $U$ is strongly convex.
\item 
By an orthonormal transformation, a similar result holds if the condition~\eqref{eq:condUnabla2} is replaced by $\lambda  S \leqslant \na^2 U(x) \leqslant \Lambda S$ for some $S\in\mathcal M_{n,n}^{sym>0}(\R)$, and similarly if \eqref{eq:Usuffitpas} is replaced by $\na^2 U(x) = \lambda S$, $\na^2 U(x')=\Lambda S$.
\item 
This  also proves that the assumption that $\na^2 U$ is bounded is necessary to get a contraction in the kinetic case. Indeed, for   $n=1$, for a given value of $\gamma$, if we suppose that $U$ is uniformly convex with $U''(x)$  that is not bounded then, by fixing some $x_0$ with $\lambda:=U''(x_0)>0$, the condition \nvv{$\sqrt{U''(x)} - \sqrt{\lambda} \geqslant \gamma $} will be satisfied for some $x$ and we will be in the frame of the second point of  Proposition~\ref{prop:NSconditionLangevin}.
\item \nvv{It also provides a counter-example which shows that it is not sufficient that the eigenvalues of $J_b(z)$ have a real part lower than $-\rho$ uniformly in $z$ for $\Contd$ to hold. Indeed, in dimension $n=1$ with $U(x)=x^2+x^4$ and $\gamma=1$ for instance, the eigenvalues of $J_b(x,y)$ are $-1/2\pm \sqrt{1/4 - U''(x)}$ whose real part is $-1/2$ for all $z=(x,y)$  although, from the previous remark, for any $\rho>0$, $\Contd$ does not hold for any $M$.}
\item We only give an explicit expression for $\rho$ and $M$ under the strong condition $4\Lambda \leqslant \gamma^2$ for simplicity and because, as this condition does not involve $\lambda$, it is particularly interesting in the case of unpinned chains of oscillators, see next section. Of course, as soon as \nvv{$\sqrt{\Lambda}-\sqrt{\lambda}   <  \gamma$}, the proof of Proposition~\ref{prop:NSconditionLangevin} yields  explicit values for $\rho$ and $M$ such that $\Contd$ holds. The question of maximizing $\rho$ for given values of $\Lambda,\lambda$ with a suitable choice of $\gamma$ and $M$ \nvv{leads to} tedious expressions and we won't discuss it here.
\end{itemize}

The remainder of this section is devoted to the proof of Proposition~\ref{prop:NSconditionLangevin}. We start with an elementary lemma.

\begin{lem}\label{lem:condCNS}
For $\lambda,\Lambda,\gamma>0$ with $\Lambda\geqslant \lambda$, the two following are equivalent.
\begin{enumerate}[label=(\roman*)]
\item \nvv{$\sqrt{\Lambda}-\sqrt{\lambda}   <  \gamma$}.
\item There exist $a,c\in\R$ with $c^2<a$ such that, for $\xi\in \{\lambda,\Lambda\}$,
\begin{equation}\label{eq:xi}
\begin{pmatrix}
1 & c \\ c & a
\end{pmatrix}
 \begin{pmatrix}
0 & -1 \\ \xi  & \gamma
\end{pmatrix} \ > \ 0\,.
\end{equation}
\end{enumerate}
Moreover, if $\gamma\geqslant 2 \sqrt{\Lambda}$, setting $a=1/\Lambda$ and $c=1/\gamma$, then $c^2 \leqslant a/4$ and for all $\xi\in[\lambda,\Lambda]$,
\begin{equation*}
\begin{pmatrix}
1 & c \\ c & a
\end{pmatrix}
 \begin{pmatrix}
0 & -1 \\ \xi  & \gamma
\end{pmatrix} \ \geqslant  \ \frac{\lambda}{3\gamma} \begin{pmatrix}
1 & c \\ c & a
\end{pmatrix}\,.
\end{equation*}
\end{lem}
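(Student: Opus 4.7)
The plan is to unpack (ii) into concrete arithmetic conditions and then analyse the resulting quadratic in $\xi$. Computing the symmetric part of the product,
\begin{equation*}
\mathrm{Sym}\po \begin{pmatrix} 1 & c \\ c & a\end{pmatrix}\begin{pmatrix} 0 & -1 \\ \xi & \gamma\end{pmatrix}\pf \ = \ \begin{pmatrix} c\xi & \tfrac{1}{2}(a\xi + c\gamma - 1) \\ \tfrac{1}{2}(a\xi + c\gamma - 1) & a\gamma - c\end{pmatrix},
\end{equation*}
positive definiteness at a given $\xi>0$ forces $c>0$ and $a\gamma>c$, and the remaining determinant condition reduces to the strict inequality $f(\xi)<0$ with
\begin{equation*}
f(\xi) \ := \ a^2\xi^2 - 2(ac\gamma + a - 2c^2)\xi + (c\gamma - 1)^2.
\end{equation*}
Under the constraints $c>0$, $c^2<a$, $a\gamma>c$ entailed by (ii), a direct factorisation shows the discriminant of $f$ equals $16\,c(a-c^2)(a\gamma-c)>0$, so $f<0$ on an open interval $(\xi_-,\xi_+)$ of positive numbers, and (ii) becomes the inclusion $[\lambda,\Lambda]\subset(\xi_-,\xi_+)$.

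The heart of the equivalence is the computation of this interval's width. Combining Vieta's formulas with $(\sqrt{\xi_+}-\sqrt{\xi_-})^2=\xi_++\xi_--2\sqrt{\xi_+\xi_-}$ gives $\sqrt{\xi_+}-\sqrt{\xi_-}=2\sqrt{a-c^2}/a$ when $c\gamma\geq 1$ and $\sqrt{\xi_+}-\sqrt{\xi_-}=2\sqrt{c(a\gamma-c)}/a$ when $c\gamma\leq 1$, and in both cases this width is at most $\gamma$: in the second case by AM--GM $2\sqrt{c(a\gamma-c)}\leq a\gamma$, and in the first by $a^2\gamma^2+4c^2\geq 4ac\gamma\geq 4a$ (AM--GM combined with $c\gamma\geq 1$). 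Hence (ii)$\Rightarrow$(i) is immediate: strict inclusion $\xi_-<\lambda\leq\Lambda<\xi_+$ forces $\sqrt{\Lambda}-\sqrt{\lambda}<\sqrt{\xi_+}-\sqrt{\xi_-}\leq \gamma$, which is equivalent to (i).

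For the converse, given $\sqrt{\Lambda}-\sqrt{\lambda}<\gamma$ I would choose $\tau\in(0,\sqrt{\lambda})$ and $w>\sqrt{\Lambda}$ with $w-\tau<\gamma$, set $v:=\tau w$, and seek $(a,c)$ with $c\gamma<1$ such that the roots of $f$ are $\tau^{2}$ and $w^{2}$. Substituting $c=(1-av)/\gamma$ (which already enforces the Vieta product $\xi_-\xi_+=v^{2}$) into the Vieta sum reduces the problem to the quadratic in $t:=av$
\begin{equation*}
((\tau+w)^{2}\gamma^{2}+4v^{2})\, t^{2} - 4v(\gamma^{2}+2v)\, t + 4v^{2} \ = \ 0,
\end{equation*}
whose discriminant simplifies to $16v^{2}\gamma^{2}(\gamma^{2}-(w-\tau)^{2})$, positive by the choice of $\tau,w$. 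Evaluating the quadratic at $0$, $v/(v+\gamma^{2})$ and $1$ shows both of its roots lie in $(v/(v+\gamma^{2}),1)$, which yields $c>0$ and $a\gamma>c$; the remaining constraint $c^{2}<a$ follows from the algebraic identity $4v^{2}(1-t)^{2}=\gamma^{2}t(4v-(\tau+w)^{2}t)$ extracted from the quadratic itself.

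Finally, for the quantitative bound under $\gamma\geq 2\sqrt{\Lambda}$ with $a=1/\Lambda$ and $c=1/\gamma$: the bound $c^{2}\leq a/4$ is immediate. The symmetric part of $MA(\xi)-\frac{\lambda}{3\gamma}M$ is positive semidefinite iff its two diagonal entries (manifestly non-negative under $\xi\geq\lambda$ and $\gamma^{2}\geq 4\Lambda$) and its determinant are non-negative; as a function of $\xi$ the determinant is the difference of a linear term and a convex quadratic, hence concave, so it suffices to check non-negativity at the endpoints $\xi=\lambda$ and $\xi=\Lambda$, each reducing to an elementary inequality using $\gamma^{2}\geq 4\Lambda$ and $\lambda\leq\Lambda$. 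The main obstacle is the construction in (i)$\Rightarrow$(ii): the parametrisation by target roots works cleanly only in the branch $c\gamma<1$, because the opposite branch $c\gamma\geq 1$ would force the strictly stronger condition $\gamma\geq w+\tau$ on the auxiliary roots.
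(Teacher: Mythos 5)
Your proposal is correct and takes a genuinely different route from the paper. The paper works directly with the conditions extracted from the symmetrised product (positive diagonals plus positive determinant), performs the substitution $\alpha=\gamma a/c-1$, $\beta=1/c$, and turns (ii) into the non-emptiness of an intersection $(\gamma/(\alpha+1),\infty)\cap I(\lambda,\alpha)\cap I(\Lambda,\alpha)$ of explicit intervals; the optimisation over $\alpha$ is then dispatched by noting $\alpha\mapsto\sqrt\alpha/(1+\alpha)$ is maximised at $\alpha=1$. You instead keep $(a,c)$ as parameters and analyse the determinant polynomial $f(\xi)$ directly: the key observation that the width $\sqrt{\xi_+}-\sqrt{\xi_-}$ of the admissible interval, in both the $c\gamma\geqslant 1$ and $c\gamma\leqslant 1$ branches, is bounded by $\gamma$ (with AM--GM doing the work) gives (ii)$\Rightarrow$(i) very cleanly, arguably more transparently than the paper's interval-intersection formulation; and your converse constructs $(a,c)$ to hit prescribed roots $\tau^2,w^2$ via a reduction to a scalar quadratic in $t=av$. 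Both routes work; the paper's change of variables buys a more linear reduction, while yours isolates the extremal geometry (the root-interval width bound is sharp at $c\gamma=1$, $a=2c^2$) more explicitly. For the last part (the explicit $\rho=\lambda/(3\gamma)$ bound), the paper uses a direct matrix majorisation by diagonal matrices, whereas you argue via concavity of the determinant in $\xi$ and reduce to endpoint checks.

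Two small gaps you should close. First, in the step ``evaluating the quadratic at $0$, $v/(v+\gamma^2)$ and $1$ shows both roots lie in $(v/(v+\gamma^2),1)$'': positivity of $g$ at those three points only shows the root interval is disjoint from them; you also need the vertex (equivalently the sum of roots) to lie strictly between $v/(v+\gamma^2)$ and $1$, which does hold (it reduces to $\tau^2+w^2>0$ on one side and $\gamma>w-\tau$ on the other) but should be said. Second, in the final part the ``elementary inequalities'' at $\xi=\lambda$ and $\xi=\Lambda$ are genuine computations --- after clearing denominators they become, with $q=\gamma^2/\Lambda\geqslant 4$ and $r=\lambda/\Lambda\in(0,1]$, the polynomial inequalities $(27-12r)q^2+(4r^2+12r-36)q-4r^2\geqslant 0$ and $(24-9r)q^2+(4r-24)q-4r\geqslant 0$, which do hold on the stated range but are not obviously immediate; the paper's diagonal-majorisation route is shorter here.
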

\begin{proof}
Considering the diagonal coefficients and the determinant of the symmetrised matrix, $(ii)$ is equivalent to
\[\exists a,c\in\R\quad \text{s.t.}\quad c^2<a\,,\quad c>0\,,\quad   \gamma a  >c \,, \quad 4c\xi(\gamma a-c) > (a\xi +c\gamma -1)^2\,,\text{ for }\, \xi\in\{\lambda,\Lambda\}\,. \]
By the change of variable $\alpha=\gamma a/c-1$ and $\beta=1/c$, this is equivalent to
\begin{align*}
& \exists \alpha,\beta>0\quad \text{s.t.}\quad (\alpha+1)\beta>\gamma\,, \quad 4\xi\alpha > \po (\alpha+1)\xi/\gamma +\gamma -\beta\pf^2\,,\text{ for }\, \xi\in\{\lambda,\Lambda\}\\
\Leftrightarrow\quad & \exists \alpha>0\quad \text{s.t.}\quad \left(\frac{\gamma}{\alpha+1},+\infty\right)\cap I(\lambda,\alpha)\cap I(\Lambda,\alpha) \neq \emptyset
\end{align*}
 where
 \[I(\xi,\alpha) \ = \ \po(\alpha+1)\xi/\gamma +\gamma - 2\sqrt{\xi \alpha},(\alpha+1)\xi/\gamma +\gamma +2\sqrt{\xi \alpha}\pf\,.\]
 The upper bound of $I(\Lambda,\alpha)$ being always larger than the one of $I(\lambda,\alpha)$,
for any $\alpha>0$,
\begin{align*}
I(\lambda,\alpha)\cap I(\Lambda,\alpha) \neq \emptyset & \quad\Leftrightarrow\quad   (\alpha+1)\Lambda/\gamma +\gamma - 2\sqrt{\Lambda \alpha} < (\alpha+1)\lambda/\gamma +\gamma + 2\sqrt{\lambda \alpha}\\
& \quad\Leftrightarrow\quad  (\alpha+1)(\Lambda-\lambda)/\gamma   < 2\po \sqrt{\Lambda}+\sqrt{\lambda}\pf\sqrt{\alpha} \\
& \quad\Rightarrow\quad   (\Lambda-\lambda)/\gamma   <   \sqrt{\Lambda}+\sqrt{\lambda} \\
& \quad\Leftrightarrow\quad I(\lambda,1)\cap I(\Lambda,1) \neq \emptyset 
\end{align*}
where we used that $\alpha \mapsto \sqrt{\alpha}/(1+\alpha)$ is maximal for $\alpha=1$. Moreover, if $\beta\in I(\xi,1)$ for some $\xi>0$, then
\[\beta > 2\xi/\gamma +\gamma - 2\sqrt{\xi } \geqslant \frac{\gamma}{2}\,,\]
which means that
\[I(\lambda,1)\cap I(\Lambda,1) \neq \emptyset \quad \Rightarrow \quad \left(\frac{\gamma}{2},+\infty\right)\cap I(\lambda,1)\cap I(\Lambda,1) \neq \emptyset\]
and thus
\begin{align*}
\exists \alpha>0\ \text{s.t.}\ \left(\frac{\gamma}{\alpha+1},+\infty\right)\cap I(\lambda,\alpha)\cap I(\Lambda,\alpha) \neq \emptyset \qquad  &  \Leftrightarrow \qquad I(\lambda,1)\cap I(\Lambda,1) \neq \emptyset\\
&  \Leftrightarrow \qquad \Lambda-\lambda   <  \gamma\po \sqrt{\Lambda}+\sqrt{\lambda}\pf\,.
\end{align*}
This concludes the first part of the lemma. For the second part, assume $\gamma^2 \geqslant 4 \Lambda$, set $a=1/\Lambda$ and $c=1/\gamma$. Then, for $\xi\geqslant 0$,
\[
\begin{pmatrix}
1 & c \\ c & a
\end{pmatrix}
 \begin{pmatrix}
0 & -1 \\ \xi  & \gamma
\end{pmatrix}    =  \  \begin{pmatrix}
\frac\xi\gamma  & 0 \\ a\xi  & -\frac1\gamma+a\gamma
\end{pmatrix} \
 \geqslant  \  \begin{pmatrix}
\frac{\xi}{2\gamma}  & 0 \\ 0 & -\frac1\gamma +a\gamma - \frac{a^2\gamma\xi}{2}
\end{pmatrix}\,.
\]
Using that $a\gamma^2 \geqslant 4$, we get for $\xi \in [\lambda,\Lambda]$ that
\[-\frac1\gamma +a\gamma \po 1  - \frac{a\xi}{2} \pf \   \geqslant \  -\frac1\gamma +  \frac{a\gamma}{2}  \ \geqslant\  \frac{a\gamma}{4}\,.\] 
Moreover, since $c^2\leqslant a/4$,
\begin{equation}\label{eq:eqnorm}
\frac12 \begin{pmatrix}
1 & 0 \\ 0 & a
\end{pmatrix} \leqslant \begin{pmatrix}
1 & c \\ c & a
\end{pmatrix} \leqslant \frac32 \begin{pmatrix}
1 & 0 \\ 0 & a
\end{pmatrix} \,,
\end{equation}
and thus, for $\xi\in [\lambda,\Lambda]$,
\[
\begin{pmatrix}
1 & c \\ c & a
\end{pmatrix}
 \begin{pmatrix}
0 & -1 \\ \xi  & \gamma
\end{pmatrix}   
 \geqslant  \  \begin{pmatrix}
\frac{\lambda}{2\gamma}  & 0 \\ 0 & \frac{a\gamma}{4}
\end{pmatrix}
\geqslant \frac23 \min \po \frac{\lambda}{2\gamma} , \frac{\gamma}{4}  \pf \begin{pmatrix}
1 & c \\ c & a
\end{pmatrix} = 
\frac{\lambda}{3\gamma} \begin{pmatrix}
1 & c \\ c & a
\end{pmatrix} 
\,,
\]
where we used that $\gamma^2 \geqslant 4\Lambda \geqslant 2\lambda$.
\end{proof}

\begin{proof}[Proof of Proposition~\ref{prop:NSconditionLangevin}]

The Jacobian matrix of the drift is 
\[J_b(x,y) \ = \ \begin{pmatrix}
0 & I_n \\ -\na^2 U(x) & -\gamma I_n
\end{pmatrix}\,.\]
If $\xi(x)$ is an eigenvalue of $\na^2U(x)$ for  $x\in\R^n$ then, by diagonalising  $\na^2 U(x)$, we see that $-\gamma/2\pm\sqrt{\gamma^2/4-\xi(x)}$ (possibly complex) are eigenvalues of $J_b(x,y)$ for all $y\in\R^n$. In particular, if there does not exist a $\lambda>0$ such that $\na^2 U(x) \geqslant \lambda I_n$ for all $x\in\R^n$ then the supremum over $x,y\in\R^n$ of the real parts of the eigenvalues of $J_b(x,y)$ is non-negative. As noticed at the beginning of Section~\ref{Subsec:resultat_courbure},  this proves that $\Contd$ cannot hold whatever $\rho>0$, $M\in\mathcal M_{2n,2n}^{sym>0}(\R)$. The first point of the proposition is proven.




We turn to the proof of the third point of the proposition, assuming that \eqref{eq:condUnabla2} holds with \nvv{$\sqrt{\Lambda}-\sqrt{\lambda}   <  \gamma$}. From Lemma~\ref{lem:condCNS}, we consider $a,c\in\R$  such that $c^2 <a$ and \eqref{eq:xi} holds. Let 
\[M \ = \ \begin{pmatrix}
I_n & c I_n \\ c I_n & a I_n
\end{pmatrix}\,, \qquad N_n(\xi) \ = \  \begin{pmatrix}
0 & -I_n \\ \xi I_n  & \gamma I_n
\end{pmatrix}\,.
\]
Then, $M$  and $M N_n(\xi)  $ are positive definite for $\xi \in \{\lambda,\Lambda\}$. In particular, there exists $\rho>0$ such that $MN_n(\xi) \geqslant \rho M$ for $\xi\in\{\lambda,\Lambda\}$. Let us prove that $MJ_{b}(x,y) \leqslant -\rho M$ for all $(x,y)\in\R^{2n}$, which will conclude the proof. Fix $(x,y)\in\R^{2n}$. Let $\mathcal O'$ be an orthonormal $n\times n$ matrix such that $(\mathcal O')^T \na^2 U(x) \mathcal O'$ is diagonal, and
\[\mathcal O \ = \ \begin{pmatrix}
\mathcal O' & 0 \\ 0 & \mathcal O'
\end{pmatrix}\,.\]
Then, $MJ_{b}(x,y) \leqslant -\rho M$ if and only if $\mathcal O^T M \mathcal O\mathcal O^TJ_{b}(x,y)\mathcal O \leqslant -\rho \mathcal O^T M \mathcal O$. For $i\in\cco 1,n\ccf$, let $R_i \in \mathcal M_{2,2n}(\R)$ be the matrix associated to $(x,y) \mapsto (x_i,y_i)$. Then,
\[\mathcal O^T M \mathcal O \ = \ M \ = \ \sum_{i=1}^n R_i^T \begin{pmatrix}
1 & c \\ c & a
\end{pmatrix}R_i\,,\qquad \mathcal O^T J_b(x,y)\mathcal O  \ =\  -\sum_{i=1}^n R_i^T N_1(\xi_i) R_i\]
where $(\xi_i)_{i\in\cco 1,n\ccf}$ are the eigenvalues of $\nabla^2 U(x)$. Since \eqref{eq:condUnabla2} holds, for all $i\in\cco 1,n\ccf$ there exists $p_i\in[0,1]$ such that $\xi_i = p_i \lambda + (1-p_i)\Lambda$, and thus
\[\sum_{i=1}^n R_i^T N_1(\xi_i) R_i \ = \ \sum_{i=1}^n R_i^T \po p_i N_1(\lambda) + (1-p_i)N_1(\Lambda)\pf R_i\ \geqslant \ \rho M\,.\]

The proof of the fourth point of the proposition is the same, using the second part of Lemma~\ref{lem:condCNS} and \eqref{eq:eqnorm}.

Finally, we prove the second point of the proposition. Working by contraposition, we suppose that $\Contd$ holds for some $\rho>0$ and $M\in\mathcal M_{2n,2n}^{sym>0}(\R)$ for a potential $U\in\mathcal C^2(\R^n)$ such that \eqref{eq:Usuffitpas} holds for some $x,x'\in\R^n$. For $i\in\cco 1,n\ccf$, notice that $R_i J_b(x,0)=N_1(\lambda) R_i$ and $R_i J_b(x',0)=N_1(\Lambda) R_i$. Applying  $\Contd$  at the  points $z=(x,0)$ and $z=(x',0)$ and denoting, for $i\in\cco 1,n\ccf$, $M_i = R_i M R_i^T$, which is a $2\times 2$ symmetric positive definite matrix, we see that necessarily $M_i N_1(\xi) \geqslant \rho M_i$ for both $\xi\in\{\lambda,\Lambda\}$, which from Lemma~\ref{lem:condCNS} proves that \nvv{$\sqrt{\Lambda}-\sqrt{\lambda}   <  \gamma$}.
\end{proof}

\subsection{Chains of oscillators}\label{Sec:chains}

Let us apply Proposition~\ref{prop:NSconditionLangevin} for potentials of the form \eqref{eq:pot_chains}. We start with the pinned case with a convex pinning potential, which is less interesting but simpler than the unpinned case.

\begin{prop}\label{prop:out-of-eq1}
Let $V,F\in \mathcal C^\infty(\R^p)$ with bounded Hessians and $F$ even.
Assume that $\na^2 F \geqslant -\kappa I_p$ for some $\kappa\geqslant 0$ and that $\na^2 V \geqslant (\lambda + 4\kappa)I_p$ for some $\lambda>0$.  Then for all $N\in\N$ the potential $U\in\mathcal{C}^\infty(\R^{pN})$ given by \eqref{eq:pot_chains} satisfies \eqref{eq:condUnabla2} with this value of $\lambda$ and with $\Lambda = \|\na^2 V\|_\infty+4\|\na^2 F\|_\infty$.

As a consequence, if $\gamma \geqslant 2\sqrt{\Lambda}$, for all $N\in\N$ the drift of the process \eqref{eq:chains} satisfies  $\Contd$ with $\rho = \lambda/(3\gamma)$ and a matrix $M$
which satisfies $\min(1,\Lambda^{-1})/2   \leqslant M \leqslant 3/2 \max(1,\Lambda^{-1}) $.
\end{prop}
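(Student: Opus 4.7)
The plan is to directly reduce to Proposition~\ref{prop:NSconditionLangevin} by estimating $\nabla^2 U$ in the operator sense on $\R^{pN}$. All the work is in checking the bounds \eqref{eq:condUnabla2} for $U$ as in \eqref{eq:pot_chains}, since the contraction then follows immediately from point 4 of Proposition~\ref{prop:NSconditionLangevin} applied with ambient dimension $n=pN$.

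First, I would write $\nabla^2 U(x) = D(x) + G(x)$ where $D(x) = \mathrm{diag}(\nabla^2 V(x_1),\dots,\nabla^2 V(x_N))$ is the block-diagonal contribution of the pinning potentials, and $G(x)$ is the contribution of the interaction terms $F(x_i-x_{i+1})$. For any $\xi=(\xi_1,\dots,\xi_N)\in \R^{pN}$, the key identity is
\[
\xi\cdot G(x)\xi \ = \ \sum_{i=1}^{N-1} (\xi_i-\xi_{i+1})\cdot \nabla^2 F(x_i-x_{i+1})(\xi_i-\xi_{i+1}),
\]
which follows because $F(x_i-x_{i+1})$ depends only on the difference $x_i-x_{i+1}$ (so its Hessian has a $2\times 2$ block tensor structure against $\begin{pmatrix}1&-1\\-1&1\end{pmatrix}$).

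Second, from this I would bound $G(x)$ from above and below: using $-\kappa I_p\leqslant \nabla^2 F\leqslant \|\nabla^2F\|_\infty I_p$ and the elementary inequality $|\xi_i-\xi_{i+1}|^2\leqslant 2(|\xi_i|^2+|\xi_{i+1}|^2)$, telescoping through $i=1,\dots,N-1$ gives
\[
-4\kappa\,|\xi|^2 \ \leqslant\ \xi\cdot G(x)\xi \ \leqslant\ 4\|\nabla^2 F\|_\infty\, |\xi|^2.
\]
Combined with $(\lambda+4\kappa)I_{pN}\leqslant D(x)\leqslant \|\nabla^2 V\|_\infty I_{pN}$, this yields
\[
\lambda I_{pN}\ \leqslant\ \nabla^2 U(x)\ \leqslant\ \bigl(\|\nabla^2 V\|_\infty+4\|\nabla^2 F\|_\infty\bigr) I_{pN}\ =\ \Lambda I_{pN},
\]
which is exactly \eqref{eq:condUnabla2}.

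Finally, the drift of \eqref{eq:chains} has the form \eqref{eq:bsuffitpas} with this $U$ and dimension $n=pN$ (the diffusion matrix, which involves the $T_i$, plays no role by Theorem~\ref{thm:contraction}). Since $\gamma\geqslant 2\sqrt{\Lambda}$, i.e.\ $\Lambda\leqslant \gamma^2/4$, the fourth point of Proposition~\ref{prop:NSconditionLangevin} directly provides $\Contd$ with $\rho=\lambda/(3\gamma)$ and the stated matrix $M$. The main point of this proof is really the elementary but crucial observation that $N$ does not enter any of the bounds on $G(x)$: the interaction contribution is controlled \emph{uniformly in the chain length} thanks to the telescoping structure, so the rate $\rho$ and the norm equivalence for $M$ are dimension-free. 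This is the only step where some care is needed; everything else is bookkeeping.
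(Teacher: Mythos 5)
Your argument is correct and it is essentially the same as the paper's: the paper likewise writes the quadratic form $u\cdot\nabla^2 U(x)u$ as the sum of the pinning and interaction contributions, bounds each interaction term via $|u_i-u_{i+1}|^2\leqslant 2|u_i|^2+2|u_{i+1}|^2$ (each $|u_i|^2$ being counted at most twice, giving the factor $4$), and then invokes the fourth point of Proposition~\ref{prop:NSconditionLangevin}. The only cosmetic difference is that you introduce the split $\nabla^2U=D+G$ explicitly and call the counting step ``telescoping''.
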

\begin{proof}
For $u,x\in \R^{pN} $,
\[u\cdot \na^2 U(x) u = \sum_{i=1}^N u_i \na^2 V(x_i) u_i +\sum_{i=1}^{N-1} (u_i-u_{i+1})\cdot \na^2 F(x_i-x_{i+1})(u_i-u_{i+1}) \,,\]
so that the result follows from bounding $|u_i-u_{i+1}|^2 \leqslant 2|u_i|^2+2|u_{i+1}|^2$ for all $i\in\cco 1,N-1\ccf$ and applying Proposition~\ref{prop:NSconditionLangevin}. 
\end{proof}

The main point of this result is that the condition $\gamma \geqslant2\sqrt{\Lambda}$,  $\rho$ and the condition number of $M$ are independent from $N$. In particular, applying Theorem~\ref{thm:contraction} and  using the equivalence of $|\cdot|$ and $\|\cdot\|_M$, along the dynamics \eqref{eq:pot_chains}, the standard Euclidean Wasserstein distances are contracted in a time $t$  by a factor $3 \max(\Lambda,\Lambda^{-1}) e^{-\rho t}$. Applying Proposition~\ref{prop:concentration}, the process admits a unique invariant measure which satisfies a log-Sobolev inequality, with a constant independent from $N$ provided the temperatures $T_i$'s are bounded uniformly in $N$.

\bigskip

Now we consider the unpinned case, i.e. we suppose that $V=0$. It is clear that even if $F$ is strongly convex then $U$ given by \eqref{eq:pot_chains}  cannot be strongly convex (and thus, from Proposition~\ref{prop:NSconditionLangevin}, a contraction cannot hold) due to the invariance by the translation of the center of mass. More precisely, $U(x+z)=U(x)$ for all $z$ in the kernel of the orthogonal projector $\Pi$ given by $\Pi x=(x_1-\bar x,\dots,x_N-\bar x)$ with $\bar x=N^{-1}\sum_{i=1}^N x_i$ and the decomposition $x=(x_1,\dots,x_N)\in (\R^p)^N$. In fact in the unpinned case we are not interested in the full process $(X,Y)=(X_i,Y_i)_{i\in\cco 1,N\ccf}$ but in the centered process seen from its center of mass, $(\Pi X,\Pi Y)$, which is a process on the image of $\Pi$.  
The simplest way to enter the framework of the rest of our work is to consider a $p(N-1)\times pN$ matrix $Q$ with $\mathrm{Ker}Q=\mathrm{Ker}\Pi$ and such that the restriction of $Q$ on the image of $\Pi$ is an orthonormal isometry. Notice that then $Q^T$ is the $pN\times p(N-1)$ matrix such that the restriction of $Q^TQ$ to $\mathrm{Im}\Pi$ is the identity.  Set $(X^c,Y^c)=(Q X,Q Y)$, which is now a process on $\R^{p(N-1)}$, isometric to $(\Pi X,\Pi Y)$, so that all results of Wasserstein contraction, invariant measure, functional inequalities etc. immediately transfer from $(X^c,Y^c)$ to $(\Pi X,\Pi Y)$. From now on we focus on $(X^c,Y^c)$, which solves
 \begin{equation}\label{eq:chains_centered}
\left\{\begin{array}{rcl}
\dd X_{t}^c & = &  Y_{t}^c \dd t \\
\dd Y_{t}^c  & = & -   Q \na U(Q^T X_{t}^c)  \dd t     - \gamma Y_{t}^c\dd t + \tilde \Sigma \dd \tilde W_{t}
\end{array}
\right.
\end{equation}
where  $\tilde \Sigma$ is a constant matrix  and $\tilde W$ is a standard Brownian motion on $\R^{p(N-1)}$. Since $Q\na U(Q^T x) = \na U^c(x)$ with $U^c(x)=U(Q^T x)$, we get that the drift of $(X^c,Y^c)$ is of the form \eqref{eq:bsuffitpas} but with $U$ replaced by $U^c$.

%
%

\begin{prop}\label{prop:out-of-eq2}
Let $F\in \mathcal C^\infty(\R^p)$ be an even potential with bounded Hessian. Assume that $\na^2 F \geqslant \kappa I_p$ for some $\kappa>0$. Let $N\geqslant 2$,   $U$ be given by \eqref{eq:pot_chains} with $V=0$, and $U^c$ be defined as above with some matrix $Q$. Then, $U^c$ satisfies \eqref{eq:condUnabla2} with $\lambda = \kappa/N^2$ and $\Lambda =  4\|\na^2 F\|_\infty$.

As a consequence, if $\gamma \geqslant 2\sqrt{\Lambda}$, for all $N\in\N$ the drift of the process \eqref{eq:chains_centered} satisfies  $\Contd$ with $\rho = \kappa/(3\gamma N^2)$ and a matrix $M$
which satisfies $\min(1,\Lambda^{-1})/2   \leqslant M \leqslant 3/2 \max(1,\Lambda^{-1}) $.
\end{prop}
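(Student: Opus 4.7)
The plan is to transfer everything to the centered variables and then reduce the Hessian estimates for $U^c$ to estimates on $\nabla^2 U$ restricted to the image of $\Pi$. Since $U^c(x^c) = U(Q^T x^c)$, for $u^c\in\R^{p(N-1)}$ and $x^c\in\R^{p(N-1)}$,
\[
u^c\cdot \na^2 U^c(x^c) u^c \ = \ u\cdot \na^2 U(Q^T x^c)\,u\,,\qquad u:=Q^T u^c\in \mathrm{Im}\,\Pi\,,
\]
and $|u|=|u^c|$ because $Q^T$ is an isometry from $\R^{p(N-1)}$ onto $\mathrm{Im}\,\Pi$. Since $V=0$, expanding as in the proof of Proposition~\ref{prop:out-of-eq1} one has
\[
u\cdot \na^2 U(x)u \ = \ \sum_{i=1}^{N-1}(u_i-u_{i+1})\cdot \na^2 F(x_i-x_{i+1})(u_i-u_{i+1})\,.
\]
The upper bound $\na^2 U^c \leqslant \Lambda I_{p(N-1)}$ with $\Lambda=4\|\na^2 F\|_\infty$ is then immediate from $|u_i-u_{i+1}|^2\leqslant 2|u_i|^2+2|u_{i+1}|^2$, exactly as in Proposition~\ref{prop:out-of-eq1}, and it actually holds without the restriction $u\in\mathrm{Im}\,\Pi$.

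For the lower bound, the hypothesis $\na^2 F\geqslant \kappa I_p$ reduces the task to the discrete Poincaré inequality
\begin{equation}\label{eq:Poincdiscr}
\sum_{i=1}^{N-1}|u_i-u_{i+1}|^2 \ \geqslant \ \frac{1}{N^2}\sum_{i=1}^N|u_i|^2\qquad \forall u\in(\R^p)^N\ \text{with}\ \sum_{i=1}^N u_i=0\,.
\end{equation}
I would prove this by writing, for any $i$, $u_i=\frac1N\sum_{j=1}^N(u_i-u_j)$ (using $\sum_j u_j=0$), and then applying Cauchy–Schwarz twice: first $|u_i|^2\leqslant \frac1N\sum_j|u_i-u_j|^2$, then bounding each telescoping difference $|u_i-u_j|^2\leqslant |i-j|\sum_{k=\min\{i,j\}}^{\max\{i,j\}-1}|u_k-u_{k+1}|^2\leqslant N\sum_{k=1}^{N-1}|u_k-u_{k+1}|^2$. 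Summing over $i$ yields \eqref{eq:Poincdiscr}. Applied to $u=Q^T u^c$, this gives $u^c\cdot\na^2 U^c(x^c)u^c\geqslant (\kappa/N^2)|u^c|^2$, establishing \eqref{eq:condUnabla2} with the announced $\lambda,\Lambda$.

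For the final consequence, I would simply note that the hypothesis $\gamma\geqslant 2\sqrt{\Lambda}$ is exactly $\gamma^2\geqslant 4\Lambda$, so the fourth point of Proposition~\ref{prop:NSconditionLangevin} applies directly to $U^c$, and the drift of \eqref{eq:chains_centered} (which is of the form \eqref{eq:bsuffitpas} with $U^c$ in place of $U$) satisfies $\Contd$ with $\rho=\lambda/(3\gamma)=\kappa/(3\gamma N^2)$ and the block matrix $M$ prescribed there, whose bounds in terms of $\Lambda$ are already provided by Proposition~\ref{prop:NSconditionLangevin}. The only genuinely non-routine step is the discrete Poincaré inequality \eqref{eq:Poincdiscr}; the constant $1/N^2$ given by the elementary double Cauchy–Schwarz argument is not sharp (the sharp constant being $4\sin^2(\pi/(2N))\sim \pi^2/N^2$ by spectral analysis of the Neumann path Laplacian), but it already gives the scaling claimed in the statement, and sharpening it would only modify the numerical constant in $\rho$.
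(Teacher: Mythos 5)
Your proposal is correct and follows the same reduction as the paper: transfer to the centered variables via $\nabla^2 U^c(x^c) = Q\nabla^2 U(Q^T x^c)Q^T$, expand $u\cdot\nabla^2U(x)u$ as a sum over nearest-neighbour differences, obtain the upper bound exactly as in Proposition~\ref{prop:out-of-eq1}, reduce the lower bound to a discrete Poincar\'e inequality on mean-zero vectors, and then invoke the fourth point of Proposition~\ref{prop:NSconditionLangevin}. The one place where you diverge from the paper is in the treatment of the discrete Poincar\'e step: the paper simply cites \cite[Proposition 17]{MoiGamma} for the spectral gap of the discrete Laplacian on $\llbracket 1,N\rrbracket$, while you supply a self-contained elementary derivation via the identity $u_i=\tfrac1N\sum_j(u_i-u_j)$ and two applications of Cauchy--Schwarz (on the average and on the telescoping chain). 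Your argument is sound and gives the constant $1/N^2$, matching the statement, and your remark that the sharp spectral-gap constant $4\sin^2(\pi/(2N))$ would only alter the numerical prefactor is accurate. The trade-off is explicitness versus length: the paper's citation is shorter but requires the reader to unpack a result from an external reference, while your version is entirely self-contained at the cost of a slightly lossier (though order-optimal) constant.
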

\begin{proof}
Since $\na^2 U^c (x) = Q\na^2 U(Q^T x) Q^T$ and $Q^T$ is an isometry from $\R^{p(N-1)}$ to $\mathrm{Im}\Pi = \{u\in\R^{pN},\ \bar u =0\}$, this is equivalent to prove that for all $u,x \in \R^{pN}$ with $\bar u = 0$,
\[\frac{\kappa}{N^2}  |u|^2 \leqslant u\cdot \na^2 U(x) u \leqslant 4\|\na^2 F\|_\infty |u|^2 \,.\]
Since
\[u\cdot \na^2 U(x) u   \ = \ \sum_{i=1}^{N-1} (u_i-u_{i+1})\cdot \na^2 F(x_i-x_{i+1}) (u_i-u_{i+1})\,, \]
the upper bound on $\na^2 U^c$ is similar to the proof of Proposition~\ref{prop:out-of-eq1} and the lower bound follows from the lower bound on $\na^2 F$ and the spectral gap of the discrete Laplacian on $\cco 1,N\ccf$, see  \cite[Proposition 17]{MoiGamma}.
\end{proof}

As in the pinned case, the condition $\gamma \geqslant 2\sqrt{\Lambda}$ on the friction and the condition number of $M$ does not depend on $N$, however now the contraction rate $\lambda$ is of order $1/N^2$. This is sharp as can be seen on the Gaussian case with $F(x)=|x|^2$. Using the equivalence of $|\cdot|$ and $\|\cdot\|_M$ yields a contraction of the standard Wasserstein distances, with an additional factor $3 \max(\Lambda,\Lambda^{-1})$.

In a more general case \eqref{eq:pot_graph}, the results and the proof are the same but the spectral gap of the Laplacian on the discrete line has to be replaced by the spectral gap of the Laplacian of the graph of interactions, as in \cite{MoiGamma} in the overdamped case.

\section{Application to generalised Langevin diffusions}\label{Sec:LangevinGeneralisee}

\subsection{Definition of the processes}\label{SubSec:Generalized}

\nvv{The objective of Section~\ref{Sec:LangevinGeneralisee}} is to extend the contraction results of  \cite{BolleyGuillinMalrieu,Chatterji1,Dalalyan} to the case of the generalised Langevin diffusions, in the sense of the Markov processes $Z=(X,Y)$ on $\R^n\times \R^p$ that solves 
\begin{equation}\label{eq:EDS}
\left\{\begin{array}{rcl}
\dd X_t & = & A Y_t \dd t \\
\dd Y_t & = & - A^T\na U(X_t) \dd t - B Y_t \dd t + \Sigma \dd W_t
\end{array}
\right.
\end{equation}
with $A\in\mathcal M_{n,p}(\R)$ (denoting by $\mathcal M_{k,\ell}(\R)$ the set of real matrices with $k$ lines and $\ell$ columns),  $B,\Sigma\in\mathcal M_{p,p}(\R)$, $U\in\mathcal C^2(\R^n)$, $W$ is a standard Brownian motion in  dimension $p$ and $H^T$ stands for the transpose of a matrix  $H$. 


Let us emphasize that the goal is not simply to quantify the convergence toward equilibrium (for instance in the sense of the   $\chi^2$ distance or of the relative entropy) of the process in a general case (which, for generalised Langevin diffusions, has been studied under some conditions in \cite{Ottobre,Pavliotis,PavliotisStoltz}, and for the classical Langevin is the topic of an abundant literature) but to establish a much stronger contraction property  under some restrictive conditions (although it turns out that the conditions on  $B$ that we will consider are in fact more general than in \cite{Ottobre,Pavliotis,PavliotisStoltz}). In particular, we  won't try to prove that some generalised Langevin diffusions \eqref{eq:EDS} converge  to equilibrium faster than the usual Langevin diffusion, or even than the overdamped Langevin diffusion. Moreover, we will only consider the continuous-time process, although the case of a numerical scheme based on \eqref{eq:EDS} can be treated in a similar way, as in \cite{MoiLangevinCinetique} for the usual Langevin diffusion.

As already mentioned in the Introduction, the  diffusion matrix $\Sigma$ does not play any role on the contraction property that is studied here. However, if  
\begin{equation}\label{eq:fluctuation-dissipation}
B + B^T \ = \  \Sigma \Sigma^T\,,
\end{equation}
\nvv{which is called a fluctuation-dissipation relation (as it relates the random fluctuation intensity with the dissipative friction, see \cite[Section 15.2.2]{Tuckerman})}
 then, assuming that   $\Sigma$ is symmetric positive  (which we can always do without loss of generality, upon replacing $\Sigma$ by the symmetric square-root of $\Sigma^T\Sigma$, which doesn't change the law of the process \eqref{eq:EDS}), it is completely determined by $B$, and more precisely by the symmetric part of $B$. The condition~\eqref{eq:fluctuation-dissipation} is equivalent to the fact that  the Gaussian part of the diffusion, namely
\begin{equation}\label{eq:partieOU}
\dd Y_t \ = \    - B Y_t + \Sigma \dd W_t\,,
\end{equation}
admits the standard Gaussian measure on $\R^p$ as an invariant measure.

Our goal is to work in a framework that allows for cases where this Gaussian part is itself degenerated, in the sense that the symmetric part of $B$ is not positive definite (the positive definite case would correspond, when the fluctuation-dissipation condition  \eqref{eq:fluctuation-dissipation} is satisfied, to the case where $\Sigma$ is invertible, and thus the process \eqref{eq:partieOU} is elliptic). Indeed, the case where the symmetric part of  $B$ is positive definite is not so different from the classical kinetic \nvv{Langevin diffusion \eqref{eq:Langevin_cinetique}}  (\nvv{which corresponds to} $p=n$ and $A=B=I_n$). We will keep in mind the following example:  $p=2n$,  
 \[A = \begin{pmatrix}
 I_n & 0
 \end{pmatrix}
 \qquad \text{and}\qquad
 B = \begin{pmatrix}
 0 & -I_n \\ I_n &  I_n
 \end{pmatrix}\,.\]
 In other words, in this case (and with the condition \eqref{eq:fluctuation-dissipation}), the  Gaussian part \eqref{eq:partieOU} is itself a kinetic Langevin diffusion. Decomposing  $Y=(Y_1,Y_2)$, the corresponding generalised Langevin diffusion  thus solves
 \begin{equation*} 
\left\{\begin{array}{rcl}
\dd X_t & = & Y_{1,t} \dd t \\
\dd Y_{1,t} & = & -  \na U(X_t)\dd t + Y_{2,t}\dd t \\
\dd Y_{2,t} &  = & - Y_{1,t} \dd t - Y_{2,t}\dd t  + \dd \tilde W_t
\end{array}
\right.
\end{equation*}
with $\tilde W$  standard Brownian motion of dimension $n$. This may be called an order 3 Langevin diffusion. More generally, for $K\geqslant 1$, we can consider the $K+1$ order Langevin diffusion, given by   $p=Kn$, 
\begin{equation}\label{eq:ABgeneralK}
A = \begin{pmatrix}
I_n & 0 & \dots   & 0
\end{pmatrix}\qquad \text{and}\qquad
B = \begin{pmatrix}
0  & -I_n & 0 & \dots & 0 \\
 I_n & 0 & -I_n & \ddots & \vdots \\
0 & \ddots & \ddots & \ddots &   0 \\
\vdots & \ddots &   I_n & 0 & - I_n\\
0 &\dots  & 0 &  I_n &   I_n
\end{pmatrix}\,.
\end{equation}

In this case, still with the condition \eqref{eq:fluctuation-dissipation}, the corresponding diffusion is highly degenerated: a system of dimension $(K+1)n$ is driven by a random noise of dimension $n$ and, as this noise is only propagated between adjacent coordinates  (i.e. from $Y_{i+1}$ to $Y_i$), Hörmander brackets of order $K$ have to be considered in order to span the full space    \cite{Hormander}.

The form of the   $K+1$ order Langevin diffusion is reminiscent of the diffusions designed in \cite{MonmarcheGuillin}. That article was concerned with the following question: for a given Gaussian target law, which generalised Ornstein-Uhlenbeck process converges fastest to this equilibrium? Under a normalisation condition (more precisely: the trace of the diffusion matrix being fixed), the answer is given by some diffusions for which the rank of the diffusion matrix is 1 (in other words there is noise directly only on one  variable). Of course, the situation is different to ours, since \cite{MonmarcheGuillin} worked within a given space, while the higher order Langevin diffusions are defined by adding new variables to the initial space  $\R^n$.

\subsection{Some motivations}\label{SubSec:LangevinGeneral}

The (usual) Langevin diffusion appears, among other contexts, in questions of \emph{effective dynamics}: given a high-dimensional system following the Hamiltonian dynamics, under some conditions, the trajectory of one coordinate approximately follows a Langevin diffusion, see  \cite[Chapter 15]{Tuckerman}. The friction/dissipation part corresponds to the interaction between the low dimensional system and its environment. However, in many cases this (Markovian) approximation is not able to reproduce some dynamical properties of the motion. The Mori-Zwanzig formalism \cite{Zwanzig,Mori} can lead to models with a non-Markovian friction (which corresponds to some memory effect of the environment in reaction to the influence of the small system), namely generalised Langevin equations of the form
\[\ddot{x_t} \ = \  -\na U(x_t)   - \int_0^t \Gamma(t-s) \dot{x}_s \dd s  + F_t\]
with a memory kernel $\Gamma:\R_+ \rightarrow \mathcal M_{n,n}(\R)$ and a Gaussian process $F$ whose covarince function is given, under the fluctuation-dissipation condition, by $\mathbb E(F_s F_t) = \Gamma(t-s)$ (the classical case corresponding to a Dirac mass at $0$ for $\Gamma$ and a white noise for $F$). In the particular case where $\Gamma(t) = \lambda^T \exp(tC) \lambda$ for some $\lambda \in \mathcal M_{p,n}(\R)$ and $C\in\mathcal M_{p,p}(\R)$, the memory can be decomposed in a finite number of variables so that the diffusion with memory is equivalent to an enlarged Markovian system 
\begin{equation*} 
\left\{\begin{array}{rcl}
\dd X_t & = & V_t \dd t \\
\dd V_t & = & -  \na U(X_t) \dd t + \lambda^T  S_t \dd t\\
\dd S_t & = & -\lambda V_t - C S_t \dd t + \Sigma \dd W_t\,,
\end{array}
\right.
\end{equation*}
with $\Sigma = (C+C^T)^{1/2}$. The log-density of the invariant measure is then $U(x)+|v|^2/2+|s|^2/2$. We refer the reader to the recent works  \cite{GeneralisedLangevin,GeneralisedLangevin2} and references within for more details on this matter.

Note that the form  \eqref{eq:EDS} is slightly more general, in particular it does not require the auxiliary Gaussian variable $Y$ to be decomposed as a velocity $V$ and a memory $S$ which are independent at equilibrium. This slightly more general form is motivated by MCMC methods. The context is then the following: the goal is to estimate some expectations with respect to a law $\pi(\dd x) \propto \exp(-U(x))\dd x$ on $\R^n$ for some $U$. To do so, one can design a Markov process  $(X,Y)$ on an extended space $\R^n\times\R^p$ ergodic with respect to a law $\pi \otimes \nu$, for some marginal $\nu$ to be chosen by the user. One of the objectives  is to design in that way processes that converge towards equilibrium faster than classical samplers (Metropolis-Hastings walk, overdamped Langevin diffusion\dots) that are typically reversible. Adding auxiliary variables gives systematic ways to design non-reversible samplers. It can also be understood as a way to create non-Markovian processes  $(X_t)_{t\geqslant 0}$ that are still ergodic with respect to the measure $\pi$. Indeed, the Markov property, useful in order to have a correct definition of the algorithm and of ergodicity, becomes a drawback concerning the efficiency of the exploration: an explorer without any memory always comes back to the same places without learning from what it has already seen (in particular, at each step, a reversible process has always, by design, a non-zero probability to go back immediately to where it was at the previous step, which after immobility is the second worst way to explore), while convergence toward equilibrium is related to the speed at which a statistically representative sample of states has been explored. A wide class of such lifted MCMC methods is given by kinetic samplers, namely methods for which the auxiliary variable is the velocity $V=\dd X/\dd t$. This case historically traces back at least to the molecular dynamics of Adler and Wrainwright \cite{AlderWainwright} based on Hamiltonian dynamics, leading to HMC methods \cite{HOROWITZ,Neal} (for \emph{Hamiltonian} or \emph{Hybrid Monte Carlo}). The Langevin diffusion belongs to this family, together with the more recent class of velocity jump process (for which the velocity $V$ is piecewise constant). We refer to \cite{Paulin,MoiLangevinCinetique} for more details and references on this (still) very vivid topic.

From the viewpoint of lifted MCMC, equation \eqref{eq:EDS} results from the following choices: the auxiliary marginal $\nu$ is a Gaussian distribution (and thus up to a linear change of variable we can assume it is a standard Gaussian) and, except for a term $-\na U$ which is also present in many other samplers,  the evolution is given by an Ornstein-Uhlenbeck diffusion, namely the drift is linear and the diffusion matrix is constant. The goal is to keep the simulation of the process simple enough: the auxiliary variable being added purposely in order to improve the algorithm, the point is not to increase significantly the numerical cost of the simulation (which is still concentrated here in the computation of $\na U$) or the sources of errors. In particular, like in the  case of the classical Langevin diffusion \cite{MoiLangevinCinetique}, a second order splitting integrator  can be used, separating the part \eqref{eq:partieOU} (which can be sampled exactly) from the (symplectic) Hamiltonian part $\dot x = A y$, $\dot y = -A^T\na U(x)$ with a Verlet scheme
  \begin{align*}
  y_{n+1/2} \ & = \ y_{n} - \frac12\delta A^T \na U(x_n)\\
  x_{n+1} \ & = \ x_{n} + \delta  A y_{n+1/2}\\
  y_{n+1} \ & = \ y_{n+1/2} - \frac12\delta A^T\na U(x_{n+1})\,,
  \end{align*}
where $\delta>0$ is the time-step (this part could be completed, like in HMC, with an accept/reject Metropolis step to correct the numerical bias due to time discretization). 

Besides, without claiming to compare the efficiency for MCMC of the generalised Langevin diffusions \eqref{eq:EDS} and HMC, let us make the following remark. One can consider the process  $(X_t,Y_t)_{t\geqslant 0}$ on $\R^n\times\R^p$ that follows the deterministic flow
\[\dot x_t \ = \ Ay_t\,,\qquad \dot y_t \ =\ -A^T \na U(x_t) \]  
and, at constant rate $\lambda$, get its auxiliary variable $Y$ partially refreshed by
 \[Y \ \leftarrow \ C Y + \po 1 - C^T C\pf^{1/2}G\]
for some $C\in\mathcal M_{p,p}(\R)$ with $|C|<1$, where $G$ is a standard Gaussian variable on $\R^p$ independent from the past. This process is linked to the generalised Langevin diffusion in the same way the randomized HMC of \cite{Paulin} (which corresponds to the case $p=n$ and $C=\alpha I_n$, $\alpha\in[0,1)$) is linked to the usual Langevin diffusion (in particular, by taking $C= e^{-\delta B}$ and $\lambda=1/\delta$,  the former converges toward the latter as $\delta\rightarrow 0$).  Actually, the process has the same advantage with respect to the classical HMC as the generalised Langevin diffusions with respect to the usual Langevin diffusion, namely they offer the possibility to introduce a memory effect that is richer than the one simply given by the velocity (but then, optimising the choice of $C$ to improve the efficiency of the process in term of MCMC algorithms is a difficult question, that we will not discuss here).  This (piecewise deterministic) process is by some aspects  quite close to the jump process studied in   Section~\ref{Subsec:PDMP} (in particular for a quadratic $U$). It is clear that there can never be an almost sure contraction, since the energy $U(x)+|y|^2/2$ is preserved along the deterministic flow, and for all $t\geqslant 0$ there is a non-zero probability that no jump has occurred during time $[0,t]$. Yet, by considering the synchronous coupling of two processes starting at two different initial conditions and taking the expectation of the square of their distances, it turns out that, similarly to \cite{Paulin} where  the same computations are obtained for the HMC process as in \cite{BolleyGuillinMalrieu,Chatterji1,Dalalyan}  for the Langevin diffusion, the computations are the same as for a generalised Langevin diffusion (with a matrix $B=-\lambda C$). This means that a contraction of the   $\mathcal W_{M,2}$ distance for some $M$ can be established under the same assumptions in both cases. Yet, in the diffusion case, by Theorem~\ref{thm:contraction}, this contraction is equivalent to the contraction of the  $\mathcal W_{M,\infty}$ distance, which does not hold for the piecewise deterministic process. 

In relation with this, we mention that the almost sure contraction for the discrete-time HMC (where refreshment of the velocity occur at deterministic times) in the case where $U$ is convex with a bounded Hessian has been established in  \cite{MangoubiSmith}.

\subsection{Contraction for generalised Langevin diffusions}\label{Sec:contractLangevinGeneralise}

Recall that the goal of this section is to give quantitative conditions under which  $\Contd$ holds for some  $\rho>0$ for the Markov process $Z=(X,Y)$ on $\R^n\times \R^p$ which solves
\begin{equation}\label{eq:EDS_Langevin_generalise_avec_gamma}
\left\{\begin{array}{rcl}
\dd X_t & = & A Y_t \dd t \\
\dd Y_t & = & - A^T\na U(X_t) \dd t - \gamma B Y_t \dd t + \sqrt {\gamma } \Sigma \dd W_t\,,
\end{array}
\right.
\end{equation}
where  $A\in\mathcal M_{n,p}(\R)$,  $B,\Sigma\in\mathcal M_{p,p}(\R)$,  $\gamma>0$, $U\in\mathcal C^\infty(\R^n)$ with all its derivatives of order $2$ or more bounded, and $W$ is a standard Brownian motion of dimension $p$. With respect to \eqref{eq:EDS}, we have added a scalar friction parameter $\gamma$. The goal is to have a simple way to state, for a given $B$, a condition of the form ``\emph{provided the friction is high enough, \dots}",  since we have seen in the previous section that such a restriction is already necessary in the case of the classical Langevin diffusion. For the same reason, we suppose that there exist  $\lambda,\Lambda>0$ such that \eqref{eq:condUnabla2} holds.  Notice that, in that case, assuming that $p=Kn$ for some $K\geqslant 1$ and that $B$ and $A$ are constituted of $n\times n$ blocks which are homotheties, then the proof of Proposition~\ref{prop:NSconditionLangevin} is straightforwardly adapted to the generalised case, and the question boils down to a linear algebra problem. However, in the following, we will not assume such a condition on $p$, $A$ and $B$.

It is also clear that, in order to get a contraction, the rank of $A$ necessarily has to be $n$ (and in particular $p\geqslant n$): otherwise, by considering some non-zero $x \in \mathrm{Ker}(A^T)$, we get that $(x,0)\in \mathrm{Ker}(J_b^T(z))$ for all $z\in\R^d$, so that $0$ is an eigenvalue of $J_b(z)$  and a contraction cannot hold. Another way to see it is that, for such an $x$, $\dd( x\cdot X_t) = 0$ for all $t\geqslant 0$ and thus, with a parallel coupling, $|x\cdot (X_{1,t}-X_{2,t})|= |x\cdot(X_{1,0}-X_{2,0})|$ for all $t\geqslant 0$.

Provided that the rank of $A$  is $n$, up to a change of variable $y\leftarrow Qy$ with an orthonormal transformation $Q$ that sends the kernel of $A$ to the $n-p$ last vectors of the canonical basis, we can suppose that  $A=(\tilde A,0,\dots,0)$ where $\tilde A\in\mathcal M_{n,n}(\R)$ is invertible. Then, the change $x\leftarrow \tilde A^{-1}x$ enables to chose $A=(I_n,0,\dots,0)$. The potential $U$ is then replaced by $x\mapsto U(\tilde Ax)$, which does not change the fact that a  condition  \eqref{eq:condUnabla2} is satisfied, up to a modification of the values of $\lambda,\Lambda$. As a consequence, in the following, without loss of generality, we assume that $A=(I_n,0,\dots,0)$. 

It is also natural to  require from  $B$  that the Gaussian process \eqref{eq:partieOU} is stable, in other words that the real parts of the eigenvalues of $B$ are all positive (for $p=n=1$  it is clearly a necessary condition to get a contraction). According to \cite[Lemma 2.11]{Arnold}, this is equivalent to saying that there exist $N\in\mathcal M_{p,p}^{sym>0}(\R)$ and $\kappa>0$ such that  $NB\geqslant \kappa N$.

\nv{
As a summary, up to now we have argued that, in order to establish a contraction for the generalised Langevin process, without loss of generality, it is necessary to consider the following  set of conditions: 
\begin{assu}\label{Hyp_simple}
There exist $\lambda,\Lambda,\kappa>0$ and  $N\in\mathcal M_{p,p}^{sym>0}(\R)$ such that $NB\geqslant  \kappa N$ and  \eqref{eq:condUnabla2} holds. Moreover, $p\geqslant n$ and $A=(I_n,0,\dots,0)$. 
\end{assu}
However, without further assumptions on the structure of $B$, it is difficult to give explicit bounds at this level of generality. Indeed, to motivate additional assumptions, let us begin the computations to see how far we go just with Assumption~\ref{Hyp_simple}.
\begin{lem}\label{Lem:calculpreliminaire}
Under Assumption~\ref{Hyp_simple}, let $a \in \mathcal M_{n,n}^{sym>0}(\R)$, $A^{-1}\in\mathcal M_{p,n}$ be a right inverse of $A$, $\theta_1,\theta_2,\alpha>0$, $\xi\in \R$,
\[M \ = \ \begin{pmatrix}
\gamma a & (A^{-1})^T \\ A^{-1} & \gamma \alpha N
\end{pmatrix}\]
and $b$ be the drift of the process~\eqref{eq:EDS_Langevin_generalise_avec_gamma}. Then, for all $(x,y)\in\R^{n+p}$,
\[M J_b(x,y) \ \leqslant \ \max_{s\in \{\lambda,\Lambda\}} \begin{pmatrix} -  s + \frac{(s-\xi)^2}{2\theta_1} + \frac{1}{2\theta_2} & 0\\ 0 & A^{-1}A + \gamma^2 \co \frac{\theta_1 \alpha^2}{2} N A^T A N+ \frac{\theta_2 }{2} C^TC-\alpha \kappa N \cf\end{pmatrix} \]
where $C= a A-(A^{-1})^T B- \xi \alpha A N $.
\end{lem}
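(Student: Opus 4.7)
The plan is a direct blockwise computation of $MJ_b(x,y)$, followed by a Young's inequality absorption of the cross terms, with $\xi$, $\theta_1$, $\theta_2$ serving as the free parameters that split the off-diagonal part into quadratic contributions in $u$ and $v$ separately.

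First I would compute $MJ_b(x,y)$ explicitly. Writing $H := \na^2 U(x)$ for brevity, the drift Jacobian reads
\[
J_b(x,y) \ = \ \begin{pmatrix} 0 & A \\ -A^T H & -\gamma B \end{pmatrix},
\]
and multiplying blockwise by $M$, using the right-inverse property $AA^{-1} = I_n$ (hence $(A^{-1})^T A^T = I_n$), yields
\[
MJ_b(x,y) \ =\ \begin{pmatrix} -H & \gamma\bigl(aA - (A^{-1})^T B\bigr) \\ -\gamma\alpha N A^T H & A^{-1}A - \gamma^2 \alpha NB \end{pmatrix}.
\]
Since the inequality is understood in the quadratic form sense, I will then evaluate $(u,v)\cdot MJ_b(x,y)(u,v)$ for an arbitrary $(u,v) \in \R^n \times \R^p$. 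The diagonal contribution is $-u^T H u + v^T(A^{-1}A - \gamma^2 \alpha NB)v$; the hypothesis $NB \geqslant \kappa N$ from Assumption~\ref{Hyp_simple} gives $-\gamma^2 \alpha v^T N B v \leqslant -\gamma^2 \alpha \kappa v^T N v$, which produces the $-\alpha\kappa N$ term in the bottom-right block of the target bound.

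The heart of the argument is the cross term. Using $H = H^T$ and $N = N^T$, $v^T N A^T H u = u^T H A N v$, so the off-diagonal contribution collapses to $\gamma u^T\bigl(aA - (A^{-1})^T B - \alpha H A N\bigr) v$. Substituting the defining identity $aA - (A^{-1})^T B = C + \xi \alpha A N$ rearranges this as
\[
\gamma u^T C v - \gamma\alpha u^T (H - \xi I_n) A N v.
\]
I would then apply Young's inequality $|a^T b| \leqslant |a|^2/(2\theta) + \theta |b|^2/2$ to each piece, with weights $\theta_2$ and $\theta_1$ respectively, obtaining
\begin{align*}
\gamma u^T C v \ &\leqslant\ \frac{|u|^2}{2\theta_2} + \frac{\gamma^2 \theta_2}{2}\, v^T C^T C v, \\
-\gamma\alpha u^T (H - \xi I_n) A N v \ &\leqslant\ \frac{u^T(H - \xi I_n)^2 u}{2\theta_1} + \frac{\gamma^2 \alpha^2 \theta_1}{2}\, v^T N A^T A N v.
\end{align*}

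Combining everything, the full quadratic form is dominated by a block-diagonal one whose bottom-right block matches exactly the expression in the statement and whose top-left block is $-H + (H - \xi I_n)^2/(2\theta_1) + I_n/(2\theta_2)$. Since $H$ is symmetric with spectrum in $[\lambda,\Lambda]$ by \eqref{eq:condUnabla2}, this top-left block has eigenvalues $f(s) := -s + (s-\xi)^2/(2\theta_1) + 1/(2\theta_2)$ with $s \in \mathrm{spec}(H) \subset [\lambda,\Lambda]$. As $f$ is a convex quadratic in $s$, its maximum over $[\lambda,\Lambda]$ is attained at an endpoint, so $\max_{s \in [\lambda,\Lambda]} f(s) = \max_{s \in \{\lambda,\Lambda\}} f(s)$, yielding the stated bound. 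The main obstacle is purely bookkeeping: the cross-term manipulation relies on the symmetry of $H$ and on carefully tracking $\xi$ to produce precisely the matrix $C$; there is no conceptual subtlety, as the structure of the bound is forced by the requirement to absorb all off-diagonal terms into $u$-pure and $v$-pure quadratic contributions.
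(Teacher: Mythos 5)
Your proposal is correct and follows essentially the same route as the paper: compute $MJ_b$ blockwise, control $-\gamma^2\alpha NB$ via $NB\geqslant\kappa N$, split the cross term with Young's inequality using $\theta_1$ (for the $\nabla^2U-\xi I_n$ piece) and $\theta_2$ (for the $C$ piece), then reduce the top-left block to the max over $\{\lambda,\Lambda\}$ by convexity. The only cosmetic difference is that the paper first writes $M$ with generic blocks $a',b,c$ to motivate the structural choice $b=(A^{-1})^T$, $c=\alpha' N$, $a'=\gamma a$, whereas you plug in the given $M$ directly; the computations are otherwise identical.
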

\begin{proof}
To motivate the particular choice of $M$, let us first consider  $M$ of the general form
\[M \ = \ \begin{pmatrix}
a' & b \\ b^T & c
\end{pmatrix}\,,\qquad a' \in \mathcal M_{n,n}^{sym>0}(\R),\ b \in \mathcal M_{n,p}(\R),\ c \in \mathcal M_{p,p}^{sym>0}(\R)\,.\]
Then,
\[M J_b(x,y) \ = \ \begin{pmatrix}
- bA^T\na^2 U(x) & a' A-\gamma b B \\- cA^T\na^2 U(x) & b^T A - \gamma cB
\end{pmatrix}\,.\]
The diagonal terms lead us to choose on the one hand $b=\beta (A^{-1})^T $ with $\beta>0$ and $A^{-1}\in\mathcal M_{p,n}$ a right-inverse of $A$, and on the other hand $c=\alpha' N$ for some $\alpha' >0$. By homogeneity (i.e. the fact that multiplying $M$ by a positive constant has no effect on the contraction property we are interested in), we can chose $\beta=1$. Since we do not want to assume any particular relation between the eigenvectors of $\na^2 U(x)$ for $x\in\R^n$ and the constant matrices $B,N$, we are also naturally lead to bound, introducing parameters $\xi\in\R$ and $\theta_1>0$, the diagonal term involving $\na^2 U$ as 
\[\forall u\in\R^n,v\in\R^p,\qquad |v\cdot N A^T(\na^2 U(x)-\xi I_n) u | \ \leqslant \ \frac{\theta_1}{2}|ANv|^2 + \frac{1}{2\theta_1} |(\na^2 U(x)-\xi I_n) u |^2\,,\]
in other words
\[\begin{pmatrix}
0 & 0\\- \alpha' NA^T\na^2 U(x) & 0
\end{pmatrix} \ \leqslant \ \begin{pmatrix}
 \frac{1}{2\theta_1}  (\na^2 U(x)-\xi I_n)^2 & 0 \\- \xi \alpha' N A^T  &\frac{\theta_1 (\alpha')^2}{2} N A^T A N 
\end{pmatrix}\,.\]
To sum up, with the choice $b=(A^{-1})^T$ and $c=\alpha' N$, we have obtained that, for all $\xi\in\R$ and $\theta_1>0$,
\begin{equation}\label{eq:matrices}
M J_b(x,y)  \leqslant   \begin{pmatrix}
-  \na^2 U(x) + \frac{1}{2\theta_1}  (\na^2 U(x)-\xi I_n)^2 & a' A-\gamma (A^{-1})^T B \\- \xi \alpha' N A^T  &\frac{\theta_1 (\alpha')^2}{2} N A^T A N+ A^{-1} A - \gamma \alpha' \kappa N
\end{pmatrix}.
\end{equation}
Denoting $f(s,\theta,\xi) = - s +(s-\xi)^2/(2\theta)$, which is convex in $s$, by diagonalizing $\nabla^2 U(x)$ we see that for all $x\in\R^n$
\[ -  \na^2 U(x) + \frac{1}{2\theta}  (\na^2 U(x)-\xi)^2 \ \leqslant \ \max_{s\in[\lambda,\Lambda]} f(s,\theta,\xi) \ = \ \max_{s\in\{\lambda,\Lambda\}} f(s,\theta,\xi)\,.\]
Since we are interested in the large $\gamma$ regime, we chose $a'=\gamma a$ and $\alpha'=\gamma \alpha$ to ensure some homogeneity in $\gamma$, so that the diagonal term of the quadratic form associated to the right hand side of \eqref{eq:matrices} is $u,v\mapsto \gamma u \cdot Cv$. We bound this term  as $\gamma u \cdot Cv \leqslant |u|^2 /(2\theta_2) + \gamma^2 \theta_2 |Cv|^2/2$ for some $\theta_2>0$, which concludes. 
\end{proof}

Lemma~\ref{Lem:calculpreliminaire} can serve as a preliminary result when trying to prove a contraction for a generalized Langevin process with some particular, explicit $B$. In the following, we add to Assumption~\ref{Hyp_simple} another condition in order to state  a simple result for a class of matrices $B$ which contains the examples given by \eqref{eq:ABgeneralK} and more generally many cases where $B$ is constituted of $n\times n$ homothety blocks, which is a natural practical case for lifted MCMC methods. 

First, when $p>n$ and $A=(I_n,0,\dots,0)$, it is natural to consider the decomposition
\begin{equation}\label{eq:B22}
B \ = \ \begin{pmatrix}
B_{11} & B_{12} \\ B_{21} & B_{22}
\end{pmatrix} 
\end{equation}
with $B_{11},B_{12},B_{21}$ and $B_{22}$ of respective sizes $n\times n$, $n\times(p-n)$, $(p-n)\times n$ and $(p-n)\times(p-n)$. We notice that a contraction for some $\rho>0$ for \eqref{eq:EDS_Langevin_generalise_avec_gamma} necessarily requires   $B_{22}$ to be invertible. Indeed, otherwise, considering a non-zero element $u\in\R^{p-n}$ in the kernel of $B_{22}^T$, then $(-B_{21}^Tu,0,u)$ is in the kernel of $J_b(x,y)^T$ for all $(x,y)\in\R^{n,p}$, so that $0$ is in the spectrum of $J_b(x,v)$ which, as we already noticed, prevents a contraction with $\rho>0$. Our additional condition is the following:

\begin{assu}\label{Hyp_simple2}
\begin{itemize}\

\item If $p>n$, considering the decomposition \eqref{eq:B22}, we suppose that $B_{22}$ is invertible and that
\begin{equation}\label{eq:matriceC}
E \ :=\ B_{11} - B_{12} B_{22}^{-1} B_{21}
\end{equation}
is a positive definite symmetric matrix. We write $D=B_{12} B_{22}^{-1}$.
\item If $p=n$, we assume that $B$ is symmetric  positive definite, and we write $D=0$ and $E=B$.
\end{itemize}  
\end{assu}

Under Assumption~\ref{Hyp_simple} and \ref{Hyp_simple2} we will denote by $h_i>0$ for $i\in\cco 1,5\ccf$ some constants such that
\begin{equation}\label{eq:defh1h2}
\begin{array}{cc}
NA^TAN \ \leqslant \ h_1 N\,,\qquad &
 \frac1{h_2} I_n \leqslant E \leqslant h_3 I_n\,,\\
 & \\
  \begin{pmatrix}
I_n &  - D  \\  0  & 0 
\end{pmatrix}\  \leqslant \ h_4 N\,,\qquad &
  \begin{pmatrix}
I_n &  -D\\  -D^T & 0
\end{pmatrix} \leqslant h_5 N\,, 
\end{array}
\end{equation}
(with simply $I_n\leqslant h_4 N$ and $h_5=h_4$ in the case $p=n$) whose existence is ensured by the fact that $N$ and $E$ are positive definite. In particular one can chose $h_1 = |AN^{1/2}|^2$, $h_2 = |E^{-1}|$ and $h_3=|E|$.

Let us show  that Assumption~\ref{Hyp_simple2} is satisfied in the case where   $B$ is given by \eqref{eq:ABgeneralK} for some $K\geqslant 2$. We check that, for $D=-(I_n,\dots,I_n)\in\mathcal M_{n,p-n}$, $D B_{22} = B_{12}$,  
 from which $E=I_n$. This computation is unchanged if we add to $B_{22}$ any matrix whose coefficients sum to zero on each column.  More generally, notice that, if $B_{22}$ is invertible and $B$ is in the form of   $n\times n$ homothety blocks, then $E$ is an homothety, in other words the problem boils down to the case $n=1$, the symmetric character is trivial and the assumption on $E$ only concerns its sign. In particular, all cases  considered in \cite{Ottobre,Pavliotis,PavliotisStoltz} are covered by Assumption~\ref{Hyp_simple2}. Remark\footnote{This was indicated to the author by Gabriel Stoltz.} that $E$  is a Schur complement of $B$ (see \cite{Schur} for details), namely decomposing 
 \[B^{-1} \ = \ \begin{pmatrix}
 (B^{-1})_{11} &  (B^{-1})_{12}  \\ (B^{-1})_{21}  & (B^{-1})_{22} 
 \end{pmatrix}
 \]
 similarly to $B$ then $E^{-1}= (B^{-1})_{11} $ (see e.g. \cite[Equation 3.4]{Schur}).

\begin{thm}\label{thm:contractLangevin}
Under  Assumptions~\ref{Hyp_simple} and \ref{Hyp_simple2}, set
\begin{equation}\label{eq:gamma0}
\gamma_0 \ = \ 2 \sqrt{\frac{h_1\Lambda}{\kappa}}\max \po \sqrt{ h_2h_5  },\sqrt{\frac{h_4}{\kappa}} \pf\,.
\end{equation}
If $\gamma \geqslant \gamma_0$, then the drift of the process \eqref{eq:EDS_Langevin_generalise_avec_gamma} satisfies $\Contd$ with 
\begin{equation}\label{eq:rho}
\rho \ = \  \min \po\frac{\lambda}{3h_3\gamma}, \frac{\gamma \kappa }{6}\pf
\end{equation}
where $M$ is an explicit matrix such that
\[\frac{1}2 \begin{pmatrix}
 E & 0 \\ 0 & \frac{ \kappa}{\Lambda h_1} N
\end{pmatrix} \ \leqslant \ M \ \leqslant \ \frac{3}2 \begin{pmatrix}
 E & 0 \\ 0 & \frac{\kappa}{\Lambda h_1} N
\end{pmatrix} \,.
\]
\end{thm}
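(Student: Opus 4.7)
I would specialize Lemma~\ref{Lem:calculpreliminaire} by choosing the right inverse $A^{-1}$ and the parameters $a,\alpha,\xi,\theta_1,\theta_2$ in a way dictated by Assumption~\ref{Hyp_simple2}, then verify that the resulting matrix $M$ satisfies $\Contd$ and the announced two-sided bound.

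\emph{Step 1: eliminating the cross structure.} Since $A=(I_n,0,\dots,0)$, any right inverse of $A$ has the form $\binom{I_n}{G}$ with $G\in\mathcal{M}_{p-n,n}(\R)$. The natural choice is $G=-D^T$ with $D=B_{12}B_{22}^{-1}$ (which makes sense because $B_{22}$ is invertible under Assumption~\ref{Hyp_simple2}): a direct computation gives $(A^{-1})^TB=(E,0)$. Setting $a=E$ (positive definite by hypothesis), the matrix $C=aA-(A^{-1})^TB-\xi\alpha AN$ appearing in Lemma~\ref{Lem:calculpreliminaire} simplifies to $-\xi\alpha AN$, so that $C^TC=\xi^2\alpha^2 NA^TAN$. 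The leftover free parameters are $\alpha,\xi,\theta_1,\theta_2$.

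\emph{Steps 2 and 3: the two diagonal blocks.} The top-left block of the bound in Lemma~\ref{Lem:calculpreliminaire} equals $(\max_{s\in\{\lambda,\Lambda\}}f(s))I_n$, with $f(s)=-s+(s-\xi)^2/(2\theta_1)+1/(2\theta_2)$ convex in $s$. Picking for instance $\xi=\lambda$ and tuning $\theta_1$ of order $\Lambda-\lambda$ and $\theta_2$ of order $1/\lambda$ yields $\max_s f(s)\leqslant -\lambda/3$; combined with $E\leqslant h_3 I_n$ this translates into $(\max_s f(s))I_n\leqslant -(\lambda/(3h_3))E=-\rho\gamma E$ for the advertised $\rho=\lambda/(3h_3\gamma)$. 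For the bottom-right block, the estimates $NA^TAN\leqslant h_1 N$ and $A^{-1}A\leqslant h_4 N$ from \eqref{eq:defh1h2} yield a bound of the form $[h_4+\gamma^2\alpha^2 h_1(\theta_1+\theta_2\xi^2)/2-\gamma^2\alpha\kappa]N$. Setting $\alpha=\kappa/(\Lambda h_1)$ makes the quadratic-in-$\alpha$ term equal to $\gamma^2\alpha\kappa$ times a constant of order $\lambda/\Lambda\leqslant 1$, and taking $\gamma\geqslant\gamma_0$ absorbs both this term and the constant $h_4$ into a fraction of $\gamma^2\alpha\kappa$, leaving a margin of order $\gamma^2\alpha\kappa/3\cdot N$; this majorizes $-\rho\gamma\alpha N$ as soon as $\rho\leqslant\gamma\kappa/6$, which is the second value in \eqref{eq:rho}.

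\emph{Step 4 and main difficulty: cross terms and the $M$ sandwich.} The residual off-diagonal contribution in $MJ_b+\rho M$ is $\rho(A^{-1})^T$ in the top-right and its transpose in the bottom-left. A Young inequality controlling $2\rho|u\cdot(A^{-1})^Tv|\leqslant \epsilon\,\gamma u\cdot Eu+\epsilon^{-1}\rho^2\gamma^{-1}\cdot v\cdot A^{-1}E^{-1}(A^{-1})^Tv$, combined with the bound $A^{-1}(A^{-1})^T\lesssim h_5 N$ deduced from \eqref{eq:defh1h2} (up to absorbing a $D^TD$ tail) and $E^{-1}\leqslant h_2 I_n$, shows that these off-diagonal terms are killed by the margins retained in the previous step as soon as $\gamma\geqslant\gamma_0$; this is precisely where the factor $\sqrt{h_2 h_5}$ in \eqref{eq:gamma0} appears. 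Finally, rescaling $M$ by $1/\gamma$ reduces the two-sided sandwich to absorbing $\gamma^{-1}(A^{-1})^T$ into a fraction of $\mathrm{diag}(E,\alpha N)$, by a last Young inequality of the same flavour. The real work of the proof is the simultaneous bookkeeping: the same value $\gamma_0$ must absorb the cross term in $MJ_b+\rho M$, the residual $A^{-1}A$ in the bottom-right block, and the off-diagonal of $M$ itself in the sandwich, which is exactly how the two competing terms inside $\max(\sqrt{h_2h_5},\sqrt{h_4/\kappa})$ in \eqref{eq:gamma0} arise.
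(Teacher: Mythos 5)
Your plan is essentially the paper's proof: choose the right inverse $A^{-1}=\binom{I_n}{-D^T}$ so that $(A^{-1})^TB=(E,0)$, take $a=E$, apply Lemma~\ref{Lem:calculpreliminaire}, and close with a Young inequality giving the sandwich on $M$ (which is exactly how the off-diagonal block of $M$ is absorbed). The one real divergence is your choice $\xi=\lambda$. The paper instead takes $\xi=0$: with $a=E$ and the chosen $A^{-1}$ one has $aA-(A^{-1})^TB=0$, so $\xi=0$ makes $C$ vanish \emph{identically}, which allows $\theta_2\to\infty$ and thus kills both the $1/(2\theta_2)$ penalty in the top-left block and the $\theta_2 C^TC/2$ term in the bottom-right. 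This is the key simplification that produces the announced constants: with $\theta_1=\Lambda$ the top-left block becomes $\max_{s\in\{\lambda,\Lambda\}}(-s(1-s/(2\Lambda)))\leqslant -\lambda/2$, and the residual $3/2$ margin is exactly what the sandwich $\tfrac12\mathrm{diag}(\gamma E,\gamma\alpha N)\leqslant M\leqslant\tfrac32\mathrm{diag}(\gamma E,\gamma\alpha N)$ consumes.

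Your choice $\xi=\lambda$ works qualitatively but is slightly less efficient: keeping $C=-\lambda\alpha AN\neq 0$ forces a finite $\theta_2$ and leaves a $1/(2\theta_2)$ term in the top-left block, and the rough tuning you indicate gives $\max_s f(s)\leqslant -\lambda/3$ rather than $-\lambda/2$. The issue is that you then set $-\lambda/(3h_3)E=-\rho\gamma E$ with the theorem's $\rho$, which uses up \emph{all} of the top-left slack; there is nothing left to absorb the off-diagonal part $\gamma^{-1}(A^{-1})^T$ of $M$ in the comparison $MJ_b\leqslant-\rho M$. To make Step~4 actually close, you would need $\max_s f(s)\leqslant -\lambda/2$ (or accept a strictly smaller $\rho$ than stated). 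So either retune: take $\theta_2=2/\lambda$, $\theta_1=(\Lambda-\lambda)/2$, which does recover $-\lambda/2$ and a term $(\theta_1+\theta_2\xi^2)=(\Lambda+\lambda)/2\leqslant\Lambda$ in the bottom-right, matching the paper's bookkeeping — or, more simply, follow the paper and set $\xi=0$. Conceptually this is the same argument, just a sharper parameter choice; nothing structural is missing.
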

\noindent\textbf{Remarks:} 
\begin{itemize}
\item In the case of the usual Langevin diffusion (for which $E=N=A=B=I_n$), this proposition shows that there is a contraction for $\gamma\geqslant 2\sqrt\Lambda$. Considering that we have proven in Proposition~\ref{prop:NSconditionLangevin} that Assumption~\ref{Hyp_simple} could be satisfied while no contraction holds if, in particular, $\Lambda-\lambda   \geqslant   \gamma\po \sqrt{\Lambda}+\sqrt{\lambda}\pf$, we see that Theorem~\eqref{thm:contractLangevin} is relatively sharp, at least away from the regime  $\lambda \simeq \Lambda$. Incidentally, in order to deal with the   situation $\lambda \simeq \Lambda$, one can show that, for any values of $\Lambda$ and $\gamma$, there is a contraction if $\lambda$ is close enough to $\Lambda$. Indeed, it is possible to prove that    $\Contd$ holds for a given couple $(M,\rho)$ with $\rho>0$ in the Gaussian case $U(x) = \Lambda|x|^2/2$, and then to treat the case $\lambda \simeq \Lambda$ as a perturbation, namely
\begin{align*}
J_b(x,y) M \ & \leqslant \ -\rho M + \begin{pmatrix}
0 & 0 \\
-A^T (\na^2U(x) - \Lambda) & 0
\end{pmatrix} M   \\
& \leqslant \  - \po\rho  -  (\Lambda-\lambda)|M^{1/2}||M^{-1/2}|\pf M.
\end{align*}
We will not detail this case.
\item As $\gamma \rightarrow +\infty$, $\rho$ scales as $1/\gamma$, which is well-known in the classical Langevin case and is related to the fact that time has to be accelerated by a factor $\gamma$ for the process to converge to a non-degenerate overdamped limit in this regime. Moreover, as already mentioned in the standard Langevin case, $\lambda$ and $B$ (hence $\kappa$) being fixed, as $\gamma\rightarrow +\infty$ and time is accelerated by a factor $\gamma$, we see that $\Lambda$ may take larger and larger values while $\rho \gamma$ converges to $\lambda /(3h_3)$, which is consistent with the fact that, for the overdamped process, there is a contraction with $\rho = \lambda$ as soon as $U$ is convex, without the constraint that $\na^2 U$ has to be bounded.
\end{itemize}

\begin{proof}
The goal is to chose, in Lemma~\ref{Lem:calculpreliminaire}, the matrix $a$ so that $C=0$. Indeed, afterwards, it is clear that the bound on $MJ_b$ can be made negative by choosing, first, $\theta_1$ sufficiently high, then $\alpha$ sufficiently small, then $\gamma$ sufficiently large (which will also ensure that $M$ is positive definite). We simply chose $\xi=0$,  consider a right inverse of $A$ of the form $A^{-1} = (I_n\  -D)^T$ with $D$ as in Assumption~\ref{Hyp_simple2} (simply $A^{-1}=A=I_n$ if $p=n$) and set $a= (A^{-1})^T B A^{-1}$. With this choice,
\begin{align*}
C \ = \ a A-(A^{-1})^T B \ & = \ (A^{-1})^T B \po A^{-1} A - I_{p}\pf \\
& = \ \begin{pmatrix}
 ( D B_{22}-B_{12}) D^T \ &\  D B_{22}-B_{12}) 
\end{pmatrix} \ = \ 0
\end{align*}
since $DB_{22}=B_{12}$ (when $p=n$, $C=B-B=0$). Moreover, 
\[a \ = \ B_{11} - D B_{21} +\po DB_{22}-B_{12}\pf D^T \ = \ E\,,\]
which under Assumption~\ref{Hyp_simple2} is positive definite (also in the case $p=n$ with $a=B$). As a consequence, applying Lemma~\ref{Lem:calculpreliminaire} with $A^{-1} = (I_n\  -D)^T$, $a=E$, $\xi=0$, $\theta_2$ arbitrarily large (since $C=0$), we get that for all $(x,y)\in\R^{n+p}$ and all $\theta_1>0$
\[M J_b(x,y) \ \leqslant \ \max_{s\in \{\lambda,\Lambda\}} \begin{pmatrix} -  s \po 1 - \frac{s}{2\theta_1} \pf I_n & 0\\ 0 & A^{-1}A + \gamma^2 \co \frac{\theta_1 \alpha^2}{2} N A^T A N -\alpha \kappa N \cf\end{pmatrix} \,.\] 
Choosing $\theta_1 = \Lambda$, we get
\[M J_b(x,y) \ \leqslant \  \begin{pmatrix} - \frac{\lambda}{2} I_n & 0\\ 0 & \po h_4 + \gamma^2 \alpha \co \frac{\Lambda \alpha }{2} h_1 - \kappa\cf\pf N \end{pmatrix} \,.\] 
Choosing $\alpha = \kappa /(\Lambda h_1)$ and assuming that $\gamma^2 \geqslant \gamma_0^2 \geqslant 4h_4/(\kappa \alpha)$, this means 
\[M J_b(x,y) \ \leqslant \ - \begin{pmatrix} \frac{\lambda}{2} I_n & 0\\ 0 &    \frac{\gamma^2 \alpha \kappa}4   N \end{pmatrix} \ \leqslant \ - \frac32 \rho \begin{pmatrix} \gamma a & 0\\ 0 &     \gamma  \alpha   N \end{pmatrix} \]
with $\rho$ given by  \eqref{eq:rho}. Let us prove that, provided $\gamma$ is large enough, the matrix $M$ given in Lemma~\ref{Lem:calculpreliminaire} satisfies
\begin{equation}\label{eq:M}
\frac12 \begin{pmatrix} \gamma a & 0\\ 0 &     \gamma  \alpha   N \end{pmatrix} \ \leqslant \ M \ \leqslant \frac32  \begin{pmatrix} \gamma a & 0\\ 0 &     \gamma  \alpha   N \end{pmatrix}\,.
\end{equation}
This will concludes the proof (replacing $M$ by $M/\gamma$), since we will have obtained that $M$ is positive definite and that $M J_b(x,y) \leqslant - \rho M$ for all $(x,y)\in\R^{n+p}$. Now, for all $x\in\R^n$, $y\in\R^p$ and $\theta'>0$,
\[2| y\cdot A^{-1} x | \  \leqslant  \ \theta'|x|^2 +\frac1{\theta'} |(A^{-1})^Ty|^2 \ \leqslant \ \theta' h_2  x\cdot a x +\frac{h_5}{\theta' } y\cdot N y \,.\]
The inequalities \eqref{eq:M} thus hold if
\[2\theta' h_2 \ \leqslant \ \gamma \qquad \text{and}\qquad  \frac{2h_5}{\theta'} \leqslant  \gamma\alpha  = \frac{\gamma\kappa}{\Lambda h_1}\,.\]
These two constraints on $\gamma$ are the same when $(\theta')^2 = h_1h_5\Lambda/(h_2\kappa)$ and read in this case $\gamma \geqslant 2\sqrt{h_1h_2h_5 \Lambda/\kappa}$.

\end{proof}
}

\subsection{$L^2$ hypocoercivity}\label{Subsec:L2}

In this section, we consider the process \eqref{eq:EDS_Langevin_generalise_avec_gamma} in the case where the fluctuation-dissipation condition~\eqref{eq:fluctuation-dissipation} holds, and under Assumptions~\ref{Hyp_simple} and \ref{Hyp_simple2} with $\gamma\geqslant \gamma_0$ (given by Theorem~\ref{thm:contractLangevin}). The measure $\mu_\infty$ with density proportional to $\exp(-H(x,y))$ with $H(x,y)=U(x)+|y|^2/2$ is then the unique invariant measure of the process. Indeed,  its generator can be decomposed as  $\mathcal L = \mathcal L_{Ham} + \gamma \mathcal L_{B}$ with
\[\mathcal L_{Ham}f \ = \ (Ay)\cdot \na_xf - \po A^T \na U(x)\pf \cdot \na_yf\,,\qquad \mathcal L_{B} \ = \ -(By)\cdot\na_y f + \frac12 \na_y\cdot \po   \Sigma \Sigma^T   \na_y f \pf \,,\]
where $\na\cdot$ stands for the divergence operator. Then $\mathcal L_{Ham} H(x,y) = 0$ for all $(x,y)\in\R^n\times\R^p$, and thus any probability density that is a function of   $H$ is invariant for $\mathcal L_{Ham}$. Integrating by part, we see that $\nu\otimes \mathcal N(0,I_p)$ is invariant by $\mathcal  L_{B}$ for all $\nu \in \mathcal P(\R^n)$. Uniqueness is a consequence of the contraction, see Proposition~\ref{prop:concentration}.

The goal of this section is to establish the exponentially fast convergence of $\|P_t  - \mu_\infty \|_{L^2(\mu_\infty)}$ toward $0$ as $t\rightarrow +\infty$. The uniform convexity of $U$ (and thus of $H$) implies that  $\mu_\infty$ satisfies a Poincaré inequality with constant $1/\min(1,\lambda)$ (see \cite[Proposition 4.8.1]{BakryGentilLedoux}), so that
\begin{align*}
\| P_t f - \mu_\infty f\|_{L^2(\mu_\infty)}^2 \ & \leqslant \ \frac{|M|}{\min(1,\lambda)} \int_{\R^{n+p}} \|\na P_t f\|_{M^{-1}}^2 \dd \mu_\infty  \\
& \leqslant \ \frac{|M|}{\min(1,\lambda)} e^{-2\rho(t-s)}\int_{\R^{n+p}} \|\na P_s f\|_{M^{-1}}^2 \dd \mu_\infty
\end{align*}
for all $s\in[0,t]$, where  $M$ and $\rho$ are given by Theorem~\ref{thm:contractLangevin} and we used Theorem~\ref{thm:contraction}. In order to conclude, we could check that \cite[Theorem 9]{MoiGamma} applies in order to get that there exist (explicit)  $C>0$ and $k\in\N_*$ such that, for all $s>0$ and $f\in L^2(\mu_\infty)$,
\[\int_{\R^{n+p}} \|\na P_s f\|_{M^{-1}}^2 \dd \mu_\infty \ \leqslant \ \frac{C}{(1-e^{-s})^k} \|f -\mu_\infty f\|_{L^2(\mu_\infty)}^2\,.\]
This would give an estimate of the form
\[\| P_t - \mu_\infty\|_{L^2(\mu_\infty)} \ \leqslant \ C' e^{-\rho t}\,,\]
with the rate $\rho$ given by the almost sure convergence. However, for highly degenerated diffusions, \cite[Theorem 9]{MoiGamma} yields a constant $C$ which is possibly very bad, see for instance \cite{MonmarcheGuillin}. For this reason, we will rather follow the strategy used in \cite{Paulin}, which does not rely on a regularisation property, but rather on the fact that the dual   $\mathcal L^*$  of $\mathcal L$ in $L^2(\mu_\infty)$ has a structure that is close to $\mathcal L$.

 \begin{prop}\label{prop:L2}
Under Assumptions~\ref{Hyp_simple} and \ref{Hyp_simple2}, let $h_i'>0$ for $i\in\cco 1,5\ccf$  be such that \eqref{eq:defh1h2} are satisfied with $N$ and $B$  replaced by $N^{-1}$ and $B^T$, and let $\gamma_0'$ be given by \eqref{eq:gamma0} with the $h_i$ replaced by the $h_i'$. Suppose that $\gamma \geqslant \max(\gamma_0,\gamma_0')$. Then, for all $t\geqslant 0$,
 \[\|P_t -\mu_\infty\|_{L^2(\mu_\infty)} \ \leqslant \  \sqrt{3|N^{-1}||N|} e^{-\rho t}\,,\]
where $\rho$ is given by \eqref{eq:rho}.
 \end{prop}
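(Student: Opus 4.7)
The plan is to apply Theorem~\ref{thm:contractLangevin} to the $L^2(\mu_\infty)$-adjoint semigroup $(P_t^*)_{t\geqslant 0}$ as well, and then combine the resulting gradient contractions of both $P_t$ and $P_t^*$ with the Poincaré inequality for $\mu_\infty$ via a duality argument that avoids any regularisation estimate.

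First I would identify $\mathcal{L}^*$. Decomposing $\mathcal{L}=\mathcal{L}_{Ham}+\gamma\mathcal{L}_B$ as in the text just before the statement, integration by parts against $\mu_\infty\propto e^{-H}$ gives $\mathcal{L}_{Ham}^*=-\mathcal{L}_{Ham}$, since $\mathcal{L}_{Ham}$ is the Hamiltonian vector field of $H$ and is therefore divergence-free with respect to $\mu_\infty$. Under the fluctuation-dissipation relation~\eqref{eq:fluctuation-dissipation}, a direct computation yields $\mathcal{L}_B^*=-B^T y\cdot\nabla_y+\tfrac{1}{2}\nabla_y\cdot(\Sigma\Sigma^T\nabla_y)$: the symmetric part of the friction matrix is preserved, while the antisymmetric part is reversed. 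Therefore $\mathcal{L}^*$ is (up to flipping the Hamiltonian flow) the generator of a generalised Langevin diffusion with the same $A,U,\Sigma$ and with $B$ replaced by $B^T$.

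Next I would verify that Assumptions~\ref{Hyp_simple} and~\ref{Hyp_simple2} hold for the dual process with $N$ replaced by $N^{-1}$. The key identity, obtained by setting $w=N^{-1}v$, is
\[v\cdot N^{-1}B^T v\ =\ w\cdot NBw\ \geqslant\ \kappa\, w\cdot Nw\ =\ \kappa\, v\cdot N^{-1}v,\]
so that $N^{-1}B^T\geqslant\kappa N^{-1}$. The Schur complement of $B^T$ equals $E^T=E$, still symmetric positive definite, so Assumption~\ref{Hyp_simple2} transfers without change of $E$, hence with $h_2'=h_2$ and $h_3'=h_3$. The remaining constants $h_1',h_4',h_5'$ of Proposition~\ref{prop:L2} are exactly those produced by \eqref{eq:defh1h2} applied to $(B^T,N^{-1})$, and the threshold $\gamma_0'$ is given by~\eqref{eq:gamma0} with the primed constants. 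Under $\gamma\geqslant\gamma_0'$, Theorem~\ref{thm:contractLangevin} applied to $\mathcal{L}^*$ yields $\Contd$ at the \emph{same} rate $\rho$ (since \eqref{eq:rho} depends only on $\lambda,\kappa,\gamma,h_3=h_3'$) with a matrix $M'$ whose $y$-block is proportional to $N^{-1}$ instead of $N$.

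By Theorem~\ref{thm:contraction} in its $q=2$ form (see the remark following its proof), both semigroups then enjoy the pointwise squared-gradient contractions
\[\|\nabla P_t f\|_{M^{-1}}^2\leqslant e^{-2\rho t}P_t(\|\nabla f\|_{M^{-1}}^2),\qquad \|\nabla P_t^* g\|_{(M')^{-1}}^2\leqslant e^{-2\rho t}P_t^*(\|\nabla g\|_{(M')^{-1}}^2).\]
Since $H$ is uniformly convex with constant $\min(1,\lambda)$, $\mu_\infty$ satisfies a Poincaré inequality with constant $1/\min(1,\lambda)$, as recalled at the beginning of Section~\ref{Subsec:L2}. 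For a centered $f\in L^2(\mu_\infty)$ with $\|f\|_{L^2}\leqslant 1$, I would write $\|P_t f\|_{L^2}^2=\langle f,P_t^* P_t f\rangle$ and then apply Poincaré to the centered function $P_t^* P_t f$. The two gradient contractions, successively applied to $P_t^*$ and to $P_t$, then combine (using the invariance of $\mu_\infty$ under both semigroups and Jensen's inequality) to produce a bound in terms of $\|f\|_{L^2}^2$ alone. The norm equivalences between $\|\cdot\|_{M^{-1}}$, $\|\cdot\|_{(M')^{-1}}$ and the Euclidean norm follow from the sandwich bounds of Theorem~\ref{thm:contractLangevin} applied in both the original and adjoint cases, and they collapse to the prefactor $3|N^{-1}||N|$: the factor $3$ comes from the bounds $\tfrac{1}{2}\,\mathrm{diag}\leqslant M\leqslant\tfrac{3}{2}\,\mathrm{diag}$ (and similarly for $M'$), while $|N^{-1}||N|$ accounts for comparing the $y$-blocks of $M$ and $M'$, which involve $N$ and $N^{-1}$ respectively; the $x$-blocks, both proportional to $E$, cancel.

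The main obstacle is the delicate bookkeeping of matrix constants in this last step: getting the clean factor $|N^{-1}||N|$ rather than a much larger product of full condition numbers requires that the duality $\|P_t f\|_{L^2}^2=\langle f,P_t^* P_t f\rangle$ be used to pair each factor of $P_t$ with its adjoint at precisely the right point. Without this pairing, one would be forced to bound $\int\|\nabla P_t f\|_{M^{-1}}^2\,\dd\mu_\infty$ by $\|f\|_{L^2}^2$ via an $L^2$-regularisation estimate such as \cite[Theorem 9]{MoiGamma}, which in highly degenerate cases like~\eqref{eq:ABgeneralK} would produce a prefactor blowing up with the order of degeneracy.
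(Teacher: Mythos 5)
Your identification of $\mathcal L^*$, the observation that $(X_t^*,-Y_t^*)$ solves a generalised Langevin equation with $B$ replaced by $B^T$, and the verification that Assumptions~\ref{Hyp_simple} and~\ref{Hyp_simple2} transfer to $(N^{-1},B^T)$ with the \emph{same} matrix $E$ (hence the same $h_2,h_3$ and the same rate $\rho$) all match the paper. But the final step, where you try to conclude via the Poincar\'e inequality, has a genuine gap, and in fact reproduces the route that the paper explicitly \emph{rejects} in the paragraph preceding the statement.

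Concretely: writing $\|P_t f\|_{L^2}^2=\langle f,(P_t)^*P_t f\rangle$, applying Cauchy--Schwarz, and then Poincar\'e to $g=(P_t)^*P_t f$, you bound $\|g\|_{L^2}^2$ by a constant times $\int\|\nabla g\|^2\,\dd\mu_\infty$. The two gradient contractions (for $(P_t)^*$ and then for $P_t$) and the norm equivalences then leave you with a constant times $e^{-4\rho t}\int\|\nabla f\|^2\,\dd\mu_\infty$ on the right-hand side, \emph{not} $\|f\|_{L^2}^2$. Converting $\int\|\nabla f\|^2\,\dd\mu_\infty$ into $\|f\|_{L^2}^2$ is precisely the $L^2\to H^1$ regularisation estimate (\cite[Theorem~9]{MoiGamma}) that both you and the paper want to avoid; Jensen's inequality cannot supply it, since it only gives $\|P_tf\|_{L^2}\leqslant\|f\|_{L^2}$ with no rate and no gradient control. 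Moreover, the Poincar\'e route would bring the factor $|M|/\min(1,\lambda)$ into the prefactor, and so could not yield the clean $\sqrt{3|N^{-1}||N|}$ of the statement.

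What the paper does instead is structurally different and does not use Poincar\'e. Setting $Qf(x,y)=f(x,-y)$, so that $R_t=Q(P_t)^*Q$ is the generalised Langevin semigroup with $B$ replaced by $B^T$, it applies Theorem~\ref{thm:contractLangevin} separately to $P_t$ and to $R_t$, then passes from $M$ and $M'$ to the common reference metric $K=\mathrm{diag}\big(E,\ (\kappa/\Lambda)I_p\big)$, which is invariant under $Q$, and obtains a $\mathcal W_{K,2}$ contraction of the \emph{composed} operator $(P_t)^*P_t$ at rate $\eta_t=3|N^{-1}||N|\,e^{-2\rho t}$. The decisive ingredient you are missing is that $(P_t)^*P_t$ is a Markov kernel which is \emph{reversible} with respect to $\mu_\infty$, and for such a kernel a Wasserstein-$2$ contraction at rate $\eta_t<1$ implies an $L^2(\mu_\infty)$ operator-norm bound at the same rate, by \cite[Proposition~30]{Ollivier}; this is a spectral-theoretic fact about self-adjoint Markov operators, not a Poincar\'e inequality. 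Once one has $\|(P_t)^*P_t f\|_{L^2}\leqslant\eta_t\|f\|_{L^2}$ for centered $f$, the duality identity gives $\|P_tf\|_{L^2}^2=\langle f,(P_t)^*P_tf\rangle\leqslant\eta_t\|f\|_{L^2}^2$, which is exactly the statement. To repair your proof, replace the Poincar\'e step by this reversibility-plus-spectral argument for $(P_t)^*P_t$.
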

 
 \noindent\textbf{Remark:} of course, this convergence in $L^2$ is weaker than the contraction of  Theorem~\ref{thm:contractLangevin}, in the sense that we expect it to hold for some rate $\rho>0$ and a prefactor in much a more general case, particularly without any restriction on $\gamma$ and with $U$ which is not convex but simply such that $\exp(-U)$ satisfies a Poincaré inequality, see for instance \cite{Pavliotis,Ottobre,PavliotisStoltz}.
 
 \begin{proof}
We compute $\mathcal L^*$. As a first order differential operator that leaves $\mu_\infty$ invariant, $\mathcal L_{Ham}$ is necessarily skew-symmetric ($\mathcal L_{Ham}^* = -\mathcal L_{Ham}$),  since, for $f,g$ $\mathcal C^\infty$ with compact support,
 \[0 \ = \ \int_{\R^{n+p}} \mathcal L_{Ham}(fg) \dd \mu _\infty \  = \ \int_{\R^{n+p}} f \mathcal L_{Ham} g \dd \mu _\infty + \int_{\R^{n+p}} g\mathcal L_{Ham}f \dd \mu _\infty\,.\]
 Integrating by parts, (see also \cite{MonmarcheGuillin}) we check  that $\mathcal L_B^* = \mathcal L_{B^T}$. In other words, $\mathcal L^*$ is the generator of the process $(X^*_t,Y^*_t)_{t\geqslant 0}$ that solves
 \begin{equation*} 
\left\{\begin{array}{rcl}
\dd X_t^* & = & -A Y_t^* \dd t \\
\dd Y_t^* & = &  A^T\na U(X_t^*) \dd t - \gamma B^T Y_t^* \dd t + \sqrt \gamma \Sigma \dd W_t\,,
\end{array}
\right.
\end{equation*}
so that $(X^*_t,-Y_t^*)_{t\geqslant 0}$ is a solution of  \eqref{eq:EDS_Langevin_generalise_avec_gamma} where we have simply substituted $B$ by $B^T$. Consider the associated semigroup $R_t = Q(P_t)^*Q$, where $Q$ is the operator $Qf(x,y)=f(x,-y)$. Let us show that Assumptions~\ref{Hyp_simple} and \ref{Hyp_simple2} are also satisfied in this  case. The fact that $NB \geqslant \kappa N$ is equivalent to $N^{-1} B^T \geqslant \kappa N^{-1}$. The conditions on $U$ and $A$ are unchanged. Finally, the matrix $C$ defined by \eqref{eq:matriceC} is replaced by $C^T$, which by assumption is equal to $C$ and is positive definite. As a  consequence, Assumptions~\ref{Hyp_simple} and \ref{Hyp_simple2} hold and we have simply replaced $N$ by $N^{-1}$. Theorem~\ref{thm:contractLangevin} thus applies to $(R_t)_{t\geqslant 0}$ with the $h_i$ replaced by the $h_i'$ and $M$ by $M'$ that satisfies
\[\frac{1}2 \begin{pmatrix}
 C & 0 \\ 0 & \frac{ \kappa}{\Lambda h_1'} N^{-1}
\end{pmatrix} \ \leqslant \ M' \ \leqslant \ \frac{3}2 \begin{pmatrix}
 C & 0 \\ 0 & \frac{\kappa}{\Lambda h_1'} N^{-1}
\end{pmatrix} \,.
\]
Set
\[K \ = \ \begin{pmatrix}
 C & 0 \\ 0 & \frac{\kappa}{\Lambda } I_p
\end{pmatrix} \,,\]
so that
\[ \frac{1}{2|N|h_1'}K\ \leqslant \ M' \ \leqslant \ \frac{3|N^{-1}|}{2h_1'}K\qquad\text{and}\qquad \frac{1}{2|N^{-1}|h_1}K\ \leqslant \ M \ \leqslant \ \frac{3|N|}{2h_1}K\,.\]
 Besides, the norm $\|\cdot\|_{K}$ is invariant by  $(x,y)\leftarrow (x,-y)$, so that $\mathcal W_{K,2}(\nu Q,\mu Q) = \mathcal W_{K,2}(\nu ,\mu )$ for all $\nu,\mu\in\mathcal P(\R^d)$. Using Theorem~\ref{thm:contraction}, for all $\nu,\mu\in\mathcal P(\R^d)$ and $t\geqslant 0$, 
 \begin{align*}
 \mathcal W_{K,2}^2\po \nu (P_t)^* P_t,\mu (P_t)^* P_t\pf  \ &\leqslant \ 2|N^{-1}|h_1 \mathcal W_{M,2}^2\po \nu (P_t)^* P_t,\mu (P_t)^* P_t\pf  \\
 &\leqslant \  2|N^{-1}|h_1 e^{-2\rho t}\mathcal W_{M,2}^2\po \nu (P_t)^*  ,\mu (P_t)^*  \pf  \\
 &\leqslant \  3|N^{-1}||N| e^{-2\rho t}\mathcal W_{K,2}^2\po \nu (P_t)^*  ,\mu (P_t)^*  \pf  \\ 
  &=\  3|N^{-1}||N| e^{-2\rho t} \mathcal W_{K,2}^2\po \nu QR_t ,\mu Q R_t \pf\\
    &\leqslant \  \eta_t^2 \mathcal W_{K,2}^2\po \nu  ,\mu  \pf 
 \end{align*}
 with $\eta_t = 3|N^{-1}||N| e^{-2\rho t}$,  where the last line has been obtained by applying to $R_t$ the same steps as $P_t$, and then using that   $\mathcal W_{K,2}$ is invariant by $Q$. For $t\geqslant 0$ large enough so that $\eta_t <1$, using that $(P_t)^* P_t$ is reversible with respect to $\mu_\infty$, we get by \cite[Proposition 30]{Ollivier}  that for all $f\in L^2(\mu_\infty)$ such that $\mu_\infty f= 0$,
 \[\| (P_t)^* P_t f\|_{L^2(\mu_\infty)} \ \leqslant \ \eta_t \|f\|_{L^2(\mu_\infty)},\]
 so that
 \[ \|   P_t f\|_{L^2(\mu_\infty)}^2 \ =\  \langle f, (P_t)^* P_t f\rangle \ \leqslant \  \eta_t \|f\|_{L^2(\mu_\infty)}^2\,. \]
Besides, if $\eta_t\geqslant 1$ then the inequality $\|P_t f\|_{L^2(\mu_\infty)} \leqslant \sqrt{\eta_t} \|  f\|_{L^2(\mu_\infty)}$ is obvious.
 \end{proof}

 \section*{Acknowledgments}
 
 This work has been supported by the French ANR projects EFI  (Entropy, flows, inequalities, ANR-17-CE40-0030) and SWIDIMS (ANR-20-CE40-0022).  Pierre Monmarché thanks the  Institut des Sciences du Calcul et des Données of Sorbonne Université for the creation of the MAESTRO team in relation with the study of the  generalised Langevin diffusions. He thanks Gabriel Stoltz for fruitfull discussions.

\bibliographystyle{plain}
\bibliography{biblio}

\end{document}